\newtheorem{lemma}{Lemma}[section]
\newtheorem{theorem}[lemma]{Theorem}
\newtheorem{cor}[lemma]{Corollary}
\newtheorem{prob}[lemma]{Problem}
\newtheorem{prop}[lemma]{Proposition}
\newtheorem{defi}[lemma]{Definition}
\newtheorem{example}[lemma]{Example}
\newtheorem{rem}[lemma]{Remark}
\newtheorem{nota}[lemma]{Notation}
\DeclareMathOperator\dom{dom}
\DeclareMathOperator\ima{im}
\DeclareMathOperator\spa{span}
\DeclareMathOperator\id{id}
\DeclareMathOperator\mo{mod}
\DeclareMathOperator\sac{sac}
\DeclareMathOperator\cs{cs}
\DeclareMathOperator\sym{Sym}
\newcommand{\ra}{\rightarrow}
\newcommand{\miff}{\Leftrightarrow}
\newcommand{\al}{\alpha}
\newcommand{\bt}{\beta}
\newcommand{\gam}{\gamma}
\newcommand{\del}{\delta}
\newcommand{\thet}{\theta}
\newcommand{\vth}{\vartheta}
\newcommand{\lam}{\lambda}
\newcommand{\ome}{\omega}
\newcommand{\vep}{\varepsilon}
\newcommand{\ale}{\aleph}
\newcommand{\aep}{\ale_\vep}
\newcommand{\oep}{\ome_\vep}
\newcommand{\aepp}{\ale_{\vep+1}}
\newcommand{\oepp}{\ome_{\vep+1}}
\newcommand\ara{\stackrel{\al}{\ra}}
\newcommand\arb{\stackrel{\bt}{\ra}}
\newcommand\arga{\stackrel{\gam}{\ra}}
\newcommand\ard{\stackrel{\del}{\ra}}
\newcommand\dax{\downarrow\!\! x}
\newcommand\daz{\downarrow\!\! z}
\newcommand\con{\sim_c}
\newcommand\pp{\mathbb{P}}
\newcommand\mz{\diamond}
\newcommand\lan{\langle}
\newcommand\ran{\rangle}
\newcommand\join{\bigsqcup}
\newcommand\jo{\sqcup}
\newcommand\rro{\mathit{rro}}
\newcommand\cho{\mathit{cho}}
\newcommand\iso{\mathrm{IS}_{\oepp}}
\newcommand\tk{\mbox{{\tiny $K$}}}
\newcommand\tl{\mbox{{\tiny $L$}}}
\newcommand\tq{\mbox{{\tiny $Q$}}}
\newcommand\sa{s(\al)}
\newcommand\sbt{s(\bt)}
\newcommand\ia{i_{\al}}
\newcommand\ja{j_{\al}}
\newcommand\gx{\Gamma(X)}
\newcommand\sqs{\sqsubset}
\newcommand\aal{A_\al}
\newcommand\ba{B_\al}
\newcommand\ca{C_\al}
\newcommand\cna{C^n_\al}
\newcommand\ab{A_\bt}
\newcommand\bb{B_\bt}
\newcommand\cb{C_\bt}
\newcommand\cnb{C^n_\bt}
\newcommand\hph{h_{\phi}}
\newcommand\mi{\mathcal{I}}
\newcommand\sm{\setminus}
\begin{document}

\begin{frontmatter}

\title{Conjugation in Semigroups}

\author[Aberta,CAUL]{Jo\~ao Ara\'ujo\fnref{JAraujo}}
\ead{mjoao@lmc.fc.ul.pt}
\fntext[JAraujo]{Partially supported  by FCT through the following projects: PEst-OE/MAT/UI1043/2011, Strategic Project of Centro de \'Algebra da Universidade de Lisboa; and PTDC/MAT/101993/2008, Project Computations in groups and semigroups.}
\address[Aberta]{Universidade Aberta, R. Escola Polit\'{e}cnica, 147, 1269-001 Lisboa, Portugal}
\address[CAUL]{Centro de \'{A}lgebra da Universidade de Lisboa, 1649-003 Lisboa, Portugal}

\author[MaryWash]{Janusz Konieczny\fnref{Janusz}}
\ead{jkoniecz@umw.edu}
\fntext[Janusz]{Supported by the 2011--12 University of Mary Washington Faculty Research Grant.}
\address[MaryWash]{Department of Mathematics, University of Mary Washington, Fredericksburg, VA 22401.}

\author[Nova,CAUL]{Ant\'onio Malheiro\corref{Malheiro}}
\ead{ajm@fct.unl.pt}
\cortext[Malheiro]{This work was developed within the research activities of Centro de Álgebra da Universidade de Lisboa, FCT´s project PEst-OE/MAT/UI0143/2013, and of Departamento de Matemática da Faculdade de Ciências e Tecnologia da Universidade Nova de Lisboa.}
\address[Nova]{Departamento de Matem\'atica, Faculdade de Ci\^encias e Tecnologia, Universidade Nova de Lisboa, 2829-516 Caparica, Portugal.}

\begin{abstract}
The action of any group on itself by conjugation and
the corresponding conjugacy relation play an important role in group theory.
There have been several attempts to extend the notion of conjugacy to semigroups.
In this paper, we present a new definition of conjugacy that can be applied to an arbitrary semigroup and
it does not reduce to the universal relation in semigroups with a zero.
We compare the new notion of conjugacy with existing definitions,
characterize the conjugacy in various semigroups of transformations on a set,
and count the number of conjugacy classes in these semigroups when the set is infinite.
\end{abstract}

\begin{keyword}
Semigroups; conjugacy; transformations; directed graphs; well-founded relations.
\MSC[2010] 20M07 \sep 20M20 \sep 20M15
\end{keyword}

\end{frontmatter}

\section{Introduction}\label{sint}
\setcounter{equation}{0}

Let $G$ be a group. For elements $a,b\in G$, we say that $a$ is \emph{conjugate} to $b$ if there exists $g\in G$ such that $b=g^{-1}ag$.
It is clear that this relation is an equivalence on $G$ and that $a$ is conjugate to $b$ if and only if there
exists $g\in G$ such that $ag=gb$. Using the latter formulation, one may try to extend the
notion of conjugacy to semigroups in the following way: define a relation $\sim_l$ on a semigroup $S$ by
\begin{equation}\label{econ1}
a\sim_l b\Leftrightarrow \exists_{g\in S^1}\ ag=gb,
\end{equation}
where $S^1$ is $S$ with an identity adjoined. If $a\sim_l b$, we say that $a$ is \emph{left conjugate} to $b$ \cite{Ot84,Zh91,Zh92}.
(We will write ``$\sim$'' with various subscripts for
possible definitions of conjugacy in semigroups. The subscript in $\sim_l$ comes from the name ``left conjugate.'')
In a general semigroup $S$, the relation $\sim_l$ is reflexive and transitive, but not symmetric. If $S$ has a zero, then
$\sim_l$ is the universal relation $S\times S$.
The relation $\sim_l$ is an equivalence in any free semigroup. Lallement \cite{La79} has defined the conjugate elements of a free semigroup $S$
as those related by $\sim_l$ and showed that
$\sim_l$ is equal to the following equivalence on the free semigroup $S$:
\begin{equation}\label{econ2}
a\sim_p b\Leftrightarrow \exists_{u,v\in S^1}\ a=uv \mbox{ and } b=vu.
\end{equation}
In a general semigroup $S$, the relation $\sim_p$ is reflexive and symmetric, but not transitive.
If $a\sim_p b$ in a general semigroup, we say that $a$ and $b$ are \emph{primarily conjugate} \cite{KuMa09}
(hence the subscript in $\sim_p$).
Kudryavtseva and  Mazorchuk \cite{KuMa07,KuMa09} have defined
the transitive closure $\sim_p^*$ of $\sim_p$ as
a conjugacy relation in a general semigroup. (See also \cite{Hi06}.)

Otto \cite{Ot84} has studied the relations $\sim_l$ and $\sim_p$ in the monoids $S$ presented by finite Thue systems, and introduced
a new definition of conjugate elements in such an $S$:
\begin{equation}\label{econ3}
a\sim_{\!o} b\Leftrightarrow \exists_{g,h\in S^1}\ ag=gb \mbox{ and } bh=ha.
\end{equation}
(Since $S$ is a monoid, $S^1=S$. However, we wanted to write the definition of $\sim_{\!o}$ so that it would apply to
any semigroup.) For any semigroup $S$, $\sim_{\!o}$ is an equivalence on $S$,
and so it provides another possible definition of conjugacy in a general semigroup.
However, this definition is not useful for semigroups $S$ with zero
since for every such $S$, we have $\sim_{\!o}\,\,=S\times S$. Note that
$\sim_{\!o}$ is the largest equivalence contained in $\sim_l$ and that
$\sim_p\,\,\subseteq\,\,\sim_{\!o}$ since if $a=uv$ and $b=vu$, then $au=ub$ and $bv=va$.

The aim of this paper is to introduce a new definition of conjugacy in an arbitrary
semigroup, avoiding the problems of the notions listed above. (That is, $\sim_l$ is not symmetric;
both $\sim_l$ and $\sim_{\!o}$ reduce to the universal relation
in semigroups with zero; and $\sim_p$ is not transitive and so it requires taking the transitive closure.)
Our conjugacy will be an equivalence relation $\sim_c$ on any semigroup $S$, it will not reduce to the universal relation
even when $S$ has a zero, and it will be
such that $\sim_c\,\,\subseteq\,\,\sim_{\!o}\,\,\subseteq\,\,\sim_l$ in every semigroup $S$, $\sim_c\,\,=\,\,\sim_{\!o}$ if $S$ is a semigroup
without zero,
and $\sim_c\,\,=\,\,\sim_l\,\,=\,\,\sim_p\,\,=\,\,\sim_{\!o}$ if $S$ is a group or a free semigroup.

In Section \ref{scon} we introduce the new notion of conjugacy
and prove some basic results. The following four sections are devoted
to the study of $\sim_c$ in several transformation semigroups on a finite or infinite set $X$.
The tools we use in this study are the characterization
of $\sim_c$ in transformation semigroups in terms of certain
partial homomorphisms of directed graphs (Section~\ref{srph}) and the concept of a connected partial transformation
(Section~\ref{scpt}).
Conjugacy classes in the partial transformation monoid $P(X)$ are characterized (for any $X$)
and counted (for an infinite $X$) in
Section \ref{spx}; conjugacy in the full transformation monoid $T(X)$ is treated in Section \ref{stx};
and Section \ref{sgx} deals with the monoid $\Gamma(X)$ of full injective transformations.
The paper ends with a number of problems for experts in combinatorics, symbolic dynamics, set theory, semigroups, and matrix theory
(Section~\ref{spro}).

\section{Definition of Conjugacy}\label{scon}
\setcounter{equation}{0}

We briefly describe the motivation of our new concept of conjugacy.
The starting point was the relation $\sim_{\!o}$ introduced by Otto \cite{Ot84}.
As we have already pointed out, the relation $\sim_{\!o}$ is the universal relation $S\times S$ if a semigroup
$S$ has a zero. Our goal has been to retain Otto's concept for semigroups without zero but modify his definition
in such a way that the resulting conjugacy would not reduce to triviality for semigroups with zero.

To find a suitable definition, we considered the semigroup $P(X)$ of partial transformations on $X$, that is, the set of
all mappings whose domain and image are included in $X$, with function composition as multiplication. This semigroup has a zero,
namely the transformation whose domain is empty. Let $\al,\bt\in P(X)$. Then $\al\sim_{\!o}\bt$
if and only if $\al\phi=\phi\bt$ and $\bt\psi=\psi\al$ for some $\phi,\psi\in P(X)$. (We write functions on the right
and compose from left to right.) Of course, the last two equalities
hold for $\phi=\psi=0$. We could insist that $\phi$ and $\psi$ should not be $0$ but this would not solve the problem since
then the resulting relation would not be transitive.

The solution is this. In the composition $\al\phi$, it only matters how $\phi$
is defined on the elements of $\ima(\al)$ (the image of $\al$). We insist that $\phi$ be defined for \emph{all}
elements of $\ima(\al)$, that is, that $\ima(\al)\subseteq\dom(\phi)$. With the requirement
that the transformations $\phi$ and $\psi$ come from the sets $\{\phi\in P(X):\ima(\al)\subseteq\dom(\phi)\}$
and $\{\psi\in P(X):\ima(\bt)\subseteq\dom(\psi)\}$, the relation is an equivalence. Moreover,
we will verify that for $\al\ne0$, $\ima(\al)\subseteq\dom(\phi)$ if and only if
$(\gam\al)\phi\ne0$ for every $\gam\al\in P(X)\al\sm \{0\}$, where $P(X)\al\sm \{0\}$ is the left principal ideal
generated by $\al$ with $0$ removed. Therefore, the requirement that $\phi$ and $\psi$ have
``large'' domains can be expressed in an abstract semigroup.
These considerations motivate the definition below.

Let $S$ be a semigroup with zero. For $a\in S$ with $a\ne0$,
consider $S^1a\sm \{0\}$, the left principal ideal generated by $a$ with zero removed. We will denote by
$\pp(a)$ the set of all elements $g\in S$ such that $(ma)g\ne0$ for all $ma\in S^1a\sm \{0\}$. We define
$\pp(0)$ to be $\{0\}$. If $S$ has no zero,
we agree that $\pp(a)=S$ for every $a\in S$. We will write $\pp^1(a)$ for $\pp(a)\cup\{1\}$, where
$1$ is the identity in $S^1$.

\begin{defi}\label{dcon}
{\rm
Define a relation $\con$ on a semigroup $S$ by
\begin{equation}\label{e1dcon}
a\con b\Leftrightarrow \exists_{g\in\pp^1(a)}\exists_{h\in\pp^1(b)}\ ag=gb\textnormal{ and }bh=ha.
\end{equation}
If $a\con b$, we say that $a$ is \emph{conjugate\/} to $b$.
}
\end{defi}

The relation $\sim_c$ will be called \emph{conjugacy} on $S$, which is justified by the following theorem.

\begin{theorem}\label{tcon}
Let $S$ be a semigroup. Then:
\begin{itemize}
  \item [\rm(1)] the relation $\sim_c$ is an equivalence relation on $S$;
  \item [\rm(2)] if $\sim_l$, $\sim_p$, and $\sim_{\!o}$ are relations on $S$ defined by {\rm (\ref{econ1}), (\ref{econ2}), and (\ref{econ3})},
respectively, then:
\begin{itemize}
  \item [\rm(a)] $\sim_c\,\,\subseteq\,\,\sim_{\!o}\,\,\subseteq\,\,\sim_l$,
  \item [\rm(b)] if $S$ is a semigroup without zero, then $\sim_c\,\,=\,\,\sim_{\!o}$, and
  \item [\rm(c)] if $S$ is a group or a free semigroup, then $\sim_c\,\,=\,\,\sim_l\,\,=\,\,\sim_p\,\,=\,\,\sim_{\!o}$.
\end{itemize}
\end{itemize}
\end{theorem}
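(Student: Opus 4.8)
The plan is to verify part (1) directly from Definition~\ref{dcon} and to deduce part (2) from the inclusion $\pp^1(a)\subseteq S^1$ together with the facts about $\sim_l$, $\sim_p$, and $\sim_{\!o}$ recorded in Section~\ref{sint}. Reflexivity and symmetry are routine: for reflexivity I would take $g=h=1\in\pp^1(a)$, so that $a\cdot 1=1\cdot a$ witnesses $a\con a$; for symmetry, I observe that (\ref{e1dcon}) is already symmetric in form, so that witnesses $g\in\pp^1(a)$ and $h\in\pp^1(b)$ for $a\con b$ become, with their roles interchanged, witnesses for $b\con a$.

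Transitivity is the substantial part and the step I expect to be the main obstacle. First I would note that $0$ is $\con$-related only to itself (if $b\con 0$ then some $h\in\pp^1(b)$ satisfies $bh=0$, forcing $b=0$), which lets me assume $a,b,c$ are all nonzero. Given $a\con b$ and $b\con c$, choose witnesses $g\in\pp^1(a)$, $h\in\pp^1(b)$ with $ag=gb$, $bh=ha$, and $g'\in\pp^1(b)$, $h'\in\pp^1(c)$ with $bg'=g'c$, $ch'=h'b$. A direct associativity computation gives $a(gg')=(gg')c$ and $c(h'h)=(h'h)a$, so $gg'$ and $h'h$ are the natural conjugators for $a\con c$; the difficulty is to show they lie in the admissible sets $\pp^1(a)$ and $\pp^1(c)$. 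The core of the argument is therefore a closure lemma: \emph{for nonzero $a,b$, if $g\in\pp(a)$, $g'\in\pp(b)$ and $ag=gb$, then $gg'\in\pp(a)$.} To prove it I take any $ma\in S^1a\sm\{0\}$ and use $ag=gb$ to rewrite $(ma)g=(mg)b$; this is nonzero because $g\in\pp(a)$, so it lies in $S^1b\sm\{0\}$, and applying $g'\in\pp(b)$ to this nonzero left multiple of $b$ yields $(ma)(gg')=\bigl((mg)b\bigr)g'\ne 0$, as required. Applying the lemma to $ag=gb$ gives $gg'\in\pp(a)$, and applying it to $ch'=h'b$ gives $h'h\in\pp(c)$; the cases in which $g$, $g'$, $h$, or $h'$ is the adjoined identity are handled separately but easily (e.g. if $g=1$ then $a=b$ and $gg'=g'\in\pp^1(b)=\pp^1(a)$).

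For part (2), statement (a) is immediate from $\pp^1(a)\subseteq S^1$: any pair of witnesses for $a\con b$ is a pair of witnesses for $a\sim_{\!o}b$, so $\con\,\subseteq\,\sim_{\!o}$, while $\sim_{\!o}\,\subseteq\,\sim_l$ follows by comparing (\ref{econ3}) with (\ref{econ1}). For (b), if $S$ has no zero then $\pp^1(a)=S^1$ for every $a$, so condition (\ref{e1dcon}) becomes verbatim the defining condition (\ref{econ3}) of $\sim_{\!o}$, whence $\con\,=\,\sim_{\!o}$. For (c), a group and a free semigroup both lack a zero, so (b) already gives $\con\,=\,\sim_{\!o}$, and it remains to prove $\sim_{\!o}\,=\,\sim_l\,=\,\sim_p$. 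Since $\sim_p\,\subseteq\,\sim_{\!o}\,\subseteq\,\sim_l$ always holds, it suffices to establish $\sim_l\,\subseteq\,\sim_p$ in each case: for a free semigroup this is exactly Lallement's equality $\sim_l\,=\,\sim_p$ quoted in Section~\ref{sint}, and for a group one checks that $ag=gb$ forces $b=g\inv ag$, so that $a=uv$ and $b=vu$ with $u=g$ and $v=g\inv a$, giving $a\sim_p b$.
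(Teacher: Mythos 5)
Your proposal is correct and follows essentially the same route as the paper: the same closure computation showing that the product of the two conjugators lies in $\pp^1(a)$ (by rewriting $(ma)g=(mg)b\ne0$ and then applying the second conjugator), the same immediate derivation of 2(a) and 2(b) from $\pp^1(a)\subseteq S^1$, and the same appeal to Lallement's equality $\sim_l\,=\,\sim_p$ for free semigroups. The only differences are cosmetic: you isolate the zero case and the adjoined-identity cases explicitly, and you spell out the group computation $u=g$, $v=g\inv a$ that the paper dismisses as clear.
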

\begin{proof}
It is clear that $\sim_c$ is reflexive and symmetric. Suppose $a\con b$ and $b\con c$. Then there are $g_1\in\pp^1(a)$
and $g_2\in\pp^1(b)$ such that $ag_1=g_1b$ and $bg_2=g_2c$. Thus $a(g_1g_2)=(ag_1)g_2=(g_1b)g_2=g_1(bg_2)=g_1(g_2c)=(g_1g_2)c$. Let $ma\in S^1a\sm \{0\}$.
Since $g_1\in\pp^1(a)$, we have $(mg_1)b=m(ag_1)=(ma)g_1\ne0$. Thus
$(mg_1)b\in S^1b\sm \{0\}$, and so, since $g_2\in\pp^1(b)$, we have $(ma)(g_1g_2)=m(ag_1)g_2=m(g_1b)g_2=((mg_1)b)g_2\ne0$. Hence $g_1g_2\in\pp^1(a)$.
Similarly, there is $h\in\pp^1(c)$ such that $ch=ha$.
Hence $a\con c$, and so $\sim_c$ is transitive. We have proved (1).

Statements 2(a) and 2(b) follow immediately from the definitions of $\sim_l$, $\sim_{\!o}$, and $\sim_c$.
Statement 2(c) is clearly true if $S$ is a group. Let $S$ be a free semigroup.
Then $\sim_l\,\,=\,\,\sim_p$ by \cite[Corollary~5.2]{La79}. Thus, by 2(a) and 2(b),
$\sim_c\,\,=\,\,\sim_{\!o}\,\,\subseteq\,\,\sim_l\,\,=\,\,\sim_p\,\,\subseteq\,\,\sim_{\!o}$, which implies
$\sim_c\,\,=\,\,\sim_{\!o}\,\,=\,\,\sim_l\,\,=\,\,\sim_p$.
\end{proof}

For an element $a$ of a semigroup $S$, the equivalence class of $a$ with respect to $\sim_c$
will be called the \emph{conjugacy} class of $a$ and denoted $[a]_c$.

Let $S$ be a semigroup with $0$. In contrast with the fact that $\sim_{\!o}\,\,=S\times S$, the conjugacy class of $0$
with respect to $\sim_c$ is $\{0\}$, so we always have $\sim_c\,\,\ne S\times S$ unless $S=\{0\}$.
Indeed, suppose $a\con0$. Then $ag=g0=0$ for some $g\in\pp^1(a)$. If $a\ne0$, then
$ag\ne0$ (since $a\in S^1a\sm \{0\}$). But $ag=0$, and so it follows that $a=0$. Hence we have the following lemma.
\begin{lemma}\label{lccz}
If $S$ is a semigroup with $0$ then $[0]_c=\{0\}$.
\end{lemma}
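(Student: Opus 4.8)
The plan is to prove the two inclusions $\{0\}\subseteq[0]_c$ and $[0]_c\subseteq\{0\}$ separately. The first is immediate: reflexivity of $\sim_c$ (Theorem \ref{tcon}(1)) gives $0\con0$, so $0\in[0]_c$. Thus the real content lies in the reverse inclusion, namely that every element conjugate to $0$ must itself be $0$.

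To establish $[0]_c\subseteq\{0\}$, I would fix an arbitrary $a$ with $a\con0$ and unpack Definition \ref{dcon}. This yields elements $g\in\pp^1(a)$ and $h\in\pp^1(0)$ satisfying $ag=g0$ and $0h=ha$. Only the first equation is needed: since $g0=0$, it reduces to $ag=0$. The goal then becomes to deduce $a=0$ from the combination of $ag=0$ and the membership $g\in\pp^1(a)$.

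The key step is a short argument by contradiction. Suppose $a\ne0$. Since $\pp^1(a)=\pp(a)\cup\{1\}$, there are two cases for $g$. If $g=1$, then $ag=a\cdot1=a\ne0$ outright. If instead $g\in\pp(a)$, then observing that $a=1\cdot a\in S^1a\sm\{0\}$ (taking the adjoined identity $1\in S^1$ as the multiplier, which is legitimate precisely because $a\ne0$), the defining property of $\pp(a)$ forces $ag=(1\cdot a)g\ne0$. In either case $ag\ne0$, contradicting $ag=0$. Hence $a=0$, which completes the inclusion and the lemma.

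The main obstacle—modest for a statement of this size—is the bookkeeping around the adjoined identity: one must keep $\pp(a)$ and $\pp^1(a)$ distinct and handle the possibility $g=1$ explicitly, since the membership $a\in S^1a\sm\{0\}$ depends on permitting the trivial multiplier $1\in S^1$. I would also stress that only the half-condition $ag=g0$ is actually used; the symmetric condition $0h=ha$ plays no role. This is the structural reason $[0]_c$ collapses to $\{0\}$ even though $\sim_c$ is two-sided, and it is worth noting as the contrast with $\sim_{\!o}$, for which $[0]$ is all of $S$.
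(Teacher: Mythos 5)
Your proposal is correct and follows essentially the same argument as the paper: from $a\con 0$ one gets $ag=g0=0$ for some $g\in\pp^1(a)$, and if $a\ne 0$ then $a=1\cdot a\in S^1a\sm\{0\}$ forces $ag\ne0$, a contradiction. Your explicit case split on $g=1$ versus $g\in\pp(a)$ is just a more careful spelling-out of the paper's one-line parenthetical; the substance is identical.
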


For a set $A$, we denote by $\Delta_A$ (or $\Delta$ if $A$ is understood) the identity relation on $A$,
that is $\Delta_A=\{(a,a):a\in A\}$. Recall that in any group $G$, the relation $\sim_c$ is the usual group conjugacy,
that is $a\con b$ if and only if $g^{-1}ag=b$ for some $g\in G$. It follows that in any group $G$,
we have $\sim_c\,\,=\Delta$ if and only if $G$ is commutative. This result extends to semigroups as follows.

\begin{theorem}\label{tdel}
Let $S$ be a semigroup without zero. Then $\con\,=\Delta$ if and only if $S$ is commutative and cancellative.
\end{theorem}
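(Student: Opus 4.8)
The plan is to exploit the fact that, since $S$ has no zero, $\pp^1(a)=S^1$ for every $a\in S$, so that by Theorem~\ref{tcon}(2)(b) the relation $\con$ coincides with $\sim_{\!o}$; concretely, $a\con b$ means there exist $g,h\in S^1$ with $ag=gb$ and $bh=ha$. I would then prove the two implications separately, leaning on this explicit form throughout.

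For the easy direction, I would assume $S$ is commutative and cancellative. Since $\con$ is reflexive it suffices to show $\con\,\subseteq\Delta$, so suppose $a\con b$ and pick $g\in S^1$ with $ag=gb$. If $g=1$ then $a=b$; otherwise $g\in S$, and commutativity gives $ga=ag=gb$, whence left cancellation yields $a=b$. Note that only one of the two defining equations is needed here.

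For the converse, I would assume $\con\,=\Delta$ and first establish commutativity. For $a,b\in S$, taking $u=a$ and $v=b$ in (\ref{econ2}) shows $ab\sim_p ba$; since $\sim_p\,\subseteq\,\sim_{\!o}\,=\,\con$, this gives $ab\con ba$, and $\con\,=\Delta$ forces $ab=ba$. The point worth flagging is that this reversal argument is essential: attempting to prove commutativity by conjugating $ab$ and $ba$ with the naive witness $g=b$ would require $ab^2=b^2a$, which is precisely the sort of statement being proved, so that route is circular. Passing through $\sim_p$ (equivalently, using the witnesses $g=a$, $h=b$, for which $(ab)a=a(ba)$ and $(ba)b=b(ab)$ hold by associativity alone) sidesteps this. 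With commutativity secured, I would prove cancellativity: assuming $ca=cb$, the choice $g=h=c$ gives $ag=ac=ca=cb=gb$ and $bh=bc=cb=ca=ha$, so $a\con b$ and hence $a=b$; by commutativity this single left cancellation law already yields full cancellativity.

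The main obstacle is the cancellativity half of the forward direction, and the subtlety is that it genuinely depends on commutativity having been established first: the natural witness $g=c$ turns the requirement $ag=gb$ into $ac=cb$, which matches the hypothesis $ca=cb$ only after $ac$ is commuted to $ca$. Once commutativity is in hand the witnesses $g=h=c$ work immediately, so the real content of the theorem lies in ordering these two deductions correctly rather than in any single hard computation.
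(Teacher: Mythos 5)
Your proposal is correct and follows essentially the same route as the paper: reduce $\con$ to $\sim_{\!o}$ via Theorem~\ref{tcon}(2)(b), obtain commutativity from the witnesses $(ab)a=a(ba)$ and $(ba)b=b(ab)$, and then use commutativity to turn a cancellation hypothesis into a conjugacy $a\con b$. The only difference is that you spell out the (routine) easy direction and the choice of sides in the cancellation law, which the paper leaves implicit.
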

\begin{proof}
Since $S$ has no zero, $\con\,=\,\sim_{\!o}$.
It is clear that if $S$ is commutative and cancellative, then $\con\,=\Delta$.
Conversely, suppose that $\con\,=\Delta$.
Let $a,b\in S$. Since $(ab)a=a(ba)$ and $(ba)b=b(ab)$, we have $ab\con ba$, and hence $ab=ba$.
We have proved that $S$ is commutative. Let $a,b,c\in S$ be such that $ac=bc$. Since $S$ is commutative, $ac=bc$ implies
$a\con b$, which in turn implies $a=b$. It follows that $S$ is cancellative.
\end{proof}

Theorem~\ref{tdel} is not true for semigroups with zero. For example, let $S=\{a,0\}$ be a 2-element semigroup
with $aa=0$. Then $S$ is not cancellative but we already know that $[0]_c=\{0\}$,
so $\sim_c\,\,=\Delta$.

\section{Restrictive Partial Homomorphisms of Digraphs}\label{srph}
\setcounter{equation}{0}
\setcounter{figure}{0}

The remainder of the paper is devoted to the study of the conjugacy $\sim_c$ in several important semigroups
of transformations on a set $X$ (finite or infinite). The main tool in our study will be the characterization
of $\sim_c$ in terms of certain partial homomorphisms of directed graphs (see Theorem~\ref{tconrph} and Corollary~\ref{cconh}).

A \emph{directed graph\/} (or a \emph{digraph\/}) is a pair $\Gamma=(X,R)$ where $X$ is a non-empty set (not necessarily finite)
and $R$
is a binary relation on $X$. Any element $x\in X$ is called a \emph{vertex\/} of $\Gamma$,
and any pair $(x,y)\in R$ is called an \emph{arc\/} of $\Gamma$. We will call a vertex $x$ \emph{terminal\/}
if there is no $y\in X$ such that $(x,y)\in R$.

Let $\Gamma_1=(X_1, R_1)$ and $\Gamma_2=(X_2, R_2)$ be digraphs.
A mapping $\phi:X_1\to X_2$
is called a \emph{homomorphism\/} from $\Gamma_1$ to $\Gamma_2$ if for all $x,y\in X_1$,
if $(x,y)\in R_1$, then $(x\phi,y\phi)\in R_2$ \cite{HeNe04}. Generalizing,
a partial mapping $\phi$ from $X_1$ to $X_2$
(that is, a mapping $\phi$ from some subset of $X_1$ to $X_2$)
is called a \emph{partial homomorphism\/} from $\Gamma_1$ to $\Gamma_2$ if for all $x,y\in X_1$,
if $(x,y)\in R_1$ and $x,y\in\dom(\phi)$, then $(x\phi,y\phi)\in R_2$.

\begin{defi}\label{drph}
{\rm
Let $\Gamma_1=(X_1, R_1)$ and $\Gamma_2=(X_2, R_2)$ be digraphs. A partial mapping $\phi$ from $X_1$ to $X_2$
is called a \emph{restrictive partial homomorphism\/} (or an \emph{rp-homomorphism}) from $\Gamma_1$ to $\Gamma_2$ if it satisfies the
following conditions for all $x,y\in X_1$:
\begin{itemize}
  \item [(a)] if $(x,y)\in R_1$, then $x,y\in\dom(\phi)$ and $(x\phi,y\phi)\in R_2$;
  \item [(b)] if $x$ is a terminal vertex in $\Gamma_1$ and $x\in\dom(\phi)$, then $x\phi$ is a terminal vertex in $\Gamma_2$.
\end{itemize}
We say that $\Gamma_1$ is \emph{rp-homomorphic} to $\Gamma_2$ if there is an rp-homomorphism from $\Gamma_1$ to $\Gamma_2$.
}
\end{defi}

Clearly, every rp-homomorphism from $\Gamma_1$ to $\Gamma_2$ is a partial homomorphism from $\Gamma_1$ to $\Gamma_2$.
It is also clear that the composition of rp-homomorphisms is an rp-homomorphism.

\begin{rem}\label{riso}
{\rm
Call a vertex $x$ of a digraph $\Gamma=(X, R)$ \emph{isolated}
if there is no $y\in X$ such that $(x,y)\in R$ or $(y,x)\in R$.
Let $\phi$ be an rp-homomorphism from $\Gamma_1=(X_1, R_1)$ to $\Gamma_2=(X_2, R_2)$.
Denote by $X_1'$ the set of all vertices in $\Gamma_1$ that are not isolated. Then $\phi'=\phi|_{X_1'}$
(the restriction of $\phi$ to $X_1'$)
is also an rp-homomorphism from $\Gamma_1$ to $\Gamma_2$.
}
\end{rem}

In picturing directed graphs, we will adopt the convention that the arrows will be deleted with the understanding
that the arrow goes up along the edge, to the right if the edge is horizontal, and the arrows go counter-clockwise
around a cycle. For example, consider the digraphs $\Gamma_1=(X_1, R_1)$, where $X_1=\{1,2,3,4\}$ and $ R_1=\{(2,3),(3,4)\}$,
and $\Gamma_2=(X_2, R_2)$, where $X_2=\{a,b,c,d\}$ and $ R_2=\{(a,b),(b,d),(c,d)\}$. Then a mapping presented
in Figure~\ref{f41} is a partial homomorphism from $\Gamma_1$ to $\Gamma_2$ (but not a restrictive partial homomorphism),
and a mapping from Figure~\ref{f42} is an rp-homomorphism from $\Gamma_1$ to $\Gamma_2$.

\begin{figure}[h]
\[
\xy
(0,0)*{\bullet}="1";
(-3,0)*{1};
(0,10)*{\bullet}="2";
(-3,10)*{2};
(0,20)*{\bullet}="3";
(-3,20)*{3};
(0,30)*{\bullet}="4";
(-3,30)*{4};
"4";"2" **\dir{-};
(20,10)*{\bullet}="a";
(23,10)*{a};
(20,20)*{\bullet}="b";
(23,20)*{b};
(25,30)*{\bullet}="d";
(25,33)*{d};
(30,20)*{\bullet}="c";
(33,20)*{c};
"a";"b" **\dir{-};
"d";"b" **\dir{-};
"d";"c" **\dir{-};
"1";"a" **\crv{(10,-1)} ?>* \dir{>};
"3";"a" **\crv{} ?>* \dir{>};
"4";"b" **\crv{} ?>* \dir{>};
\endxy
\]
\caption{A partial homomorphism from $\Gamma_1$ to $\Gamma_2$.}\label{f41}
\end{figure}

\begin{figure}[h]
\[
\xy
(0,0)*{\bullet}="1";
(-3,0)*{1};
(0,10)*{\bullet}="2";
(-3,10)*{2};
(0,20)*{\bullet}="3";
(-3,20)*{3};
(0,30)*{\bullet}="4";
(-3,30)*{4};
"4";"2" **\dir{-};
(20,10)*{\bullet}="a";
(23,10)*{a};
(20,20)*{\bullet}="b";
(23,20)*{b};
(25,30)*{\bullet}="d";
(25,33)*{d};
(30,20)*{\bullet}="c";
(33,20)*{c};
"a";"b" **\dir{-};
"d";"b" **\dir{-};
"d";"c" **\dir{-};
"2";"a" **\crv{} ?>* \dir{>};
"3";"b" **\crv{} ?>* \dir{>};
"4";"d" **\crv{} ?>* \dir{>};
\endxy
\]\caption{An rp-homomorphism from $\Gamma_1$ to $\Gamma_2$.}\label{f42}
\end{figure}

Let $\al\in P(X)$. Then $\al$ can be represented by the digraph $\Gamma(\al)=(X,R_\al)$,
where for all $x,y\in X$, $(x,y)\in R_\al$ if and only if $x\in\dom(\al)$ and $x\al=y$
 \cite[Section 1.2]{GaMa10}. If $x\in\dom(\al)$
and $x\al=y$, we will write $x\ara y$ (or $x\ra y$ if no ambiguity arises).
For $\al\in P(X)$, the set $\dom(\al)\cup\ima(\al)$ will be called the \emph{span} of $\al$ and denoted $\spa(\al)$.

For example, the digraph in Figure~\ref{f43} represents the transformation
\[
\al=\begin{pmatrix}
1&2&3&4&5&6&7&8&9&\ldots\\2&3&1&1&1&5&8&9&10&\ldots
\end{pmatrix}\in T(X),
\]
where $X=\{1,2,3,\ldots\}$ and $T(X)$ is the semigroup
of all $\al\in P(X)$ such that $\dom(\al)=X$.

\begin{figure}[h]
\[
\xy
(50,50)*{}*\cir<30pt>{};
(50,39.5)*{\bullet}="1";
(50,37)*{1};
(50,60.5)*{\bullet}="2";
(50,63)*{2};
(39.5,50)*{\bullet}="3";
(37,50)*{3};
(40,30)*{\bullet}="4";
(38,31)*{5};
(60,30)*{\bullet}="5";
(62,31)*{4};
(30,20)*{\bullet}="6";
(28,21)*{6};
(70,40)*{\bullet}="7";
(70,43)*{7};
(80,40)*{\bullet}="8";
(80,43)*{8};
(90,40)*{\bullet}="9";
(90,43)*{9};
(95,40)*{\cdots};
"4";"1" **\crv{} ?>* \dir{};
"5";"1" **\crv{} ?>* \dir{};
"8";"9" **\crv{} ?>* \dir{};
"7";"8"*{\bullet} **\crv{} ?>* \dir{};
(30,20)*{\bullet};"4" **\crv{} ?>* \dir{};
\endxy
\]
\caption{The digraph of a transformation.}\label{f43}
\end{figure}

\begin{defi}\label{dcr}
{\rm
Any $\al\in P(X)$ with $\ima(\al)=\{x\}$ will be called a \emph{constant}.
A subsemigroup $S$ of $P(X)$ will be called \emph{constant rich} if for every $x\in X$,
there is $\al\in S$ such that $\ima(\al)=\{x\}$.
}
\end{defi}

Among the constant rich subsemigroups of $P(X)$, we have $P(X)$ itself (an all its nonzero ideals),
the full transformation semigroup $T(X)$ (and all its ideals), and the
symmetric inverse semigroup $\mi(X)$ of all injective $\al\in P(X)$
(and all its nonzero ideals).

\begin{nota}\label{nzero}
{\rm
From now on, we will fix a nonempty set $X$ and an element $\mz\notin X$. For $\al\in P(X)$ and $x\in X$, we will write $x\al=\mz$
if and only if $x\notin\dom(\al)$. We will also assume that $\mz\al=\mz$. With this notation, it will make sense
to write $x\al=y\bt$ or $x\al\ne y\bt$ ($\al,\bt\in P(X)$, $x,y\in X$) even when $x\notin\dom(\al)$ or $y\notin\dom(\bt)$.

We will also denote by $\mathbb Z$, $\mathbb Z_+$, and $\mathbb N$ the set of integers, positive integers, and nonnegative
integers, respectively, and for semigroups $S$ and $T$, write $S\leq T$ to mean that $S$ is a subsemigroup of $T$.
}
\end{nota}

\begin{lemma}\label{lgpa}
Let $S\leq P(X)$ such that $S$ is constant rich, let $\al\in S$ with $\al\ne0$, and $\phi\in S^1$. Then:
\begin{itemize}
  \item [\rm(1)] $\phi\in\pp^1(\al)$ if and only if $\ima(\al)\subseteq\dom(\phi)$;
  \item [\rm(2)] if $\phi\in\pp^1(\al)$ and $\al\phi=\phi\bt$ for some $\bt\in S$, then $\spa(\al)\subseteq\dom(\phi)$
and for all $x,y\in X$, $x\ara y$ implies $x\phi\arb y\phi$.
\end{itemize}
\end{lemma}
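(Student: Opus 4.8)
The plan is to prove (1) first and then derive (2) from it, with the only substantive use of constant richness occurring in the forward direction of (1).

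For the backward implication of (1), I would assume $\ima(\al)\subseteq\dom(\phi)$. If $\phi=1$ there is nothing to do, so suppose $\phi\in S$ and let $m\al\in S^1\al\sm\{0\}$ be arbitrary. The key observation is that $\ima(m\al)\subseteq\ima(\al)\subseteq\dom(\phi)$: every $z\in\dom(m\al)$ satisfies $z(m\al)=(zm)\al\in\ima(\al)\subseteq\dom(\phi)$, so $z\in\dom((m\al)\phi)$. Since $m\al\ne0$ its domain is nonempty, whence $(m\al)\phi\ne0$ and therefore $\phi\in\pp(\al)\subseteq\pp^1(\al)$. This direction is routine bookkeeping with domains.

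The forward implication of (1) is the heart of the lemma. Suppose $\phi\in\pp^1(\al)$. The case $\phi=1$ is immediate since $\dom(1)=X\supseteq\ima(\al)$, so assume $\phi\in\pp(\al)$ and argue by contraposition: suppose some $y\in\ima(\al)$ had $y\notin\dom(\phi)$, and pick $x\in\dom(\al)$ with $x\al=y$. Here I would invoke constant richness to choose $c\in S$ with $\ima(c)=\{x\}$. Then $\dom(c\al)=\dom(c)\ne\emptyset$, so $c\al\ne0$, and $\ima(c\al)=\{x\al\}=\{y\}$; thus $c\al\in S^1\al\sm\{0\}$. But since $y\notin\dom(\phi)$, no point of $\dom(c\al)$ lies in $\dom((c\al)\phi)$, so $(c\al)\phi=0$, contradicting $\phi\in\pp(\al)$. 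Hence $\ima(\al)\subseteq\dom(\phi)$. The main obstacle is exactly producing this witness: constant richness is what guarantees an element of $S$ whose image is the single point $\{y\}$, which forces the composition with $\phi$ to collapse to $0$.

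For (2), I would assume in addition that $\al\phi=\phi\bt$ with $\bt\in S$. Part (1) already gives $\ima(\al)\subseteq\dom(\phi)$, so it remains to establish $\dom(\al)\subseteq\dom(\phi)$ together with the arrow condition. Take any $x\in\dom(\al)$ and set $y=x\al\in\ima(\al)\subseteq\dom(\phi)$; evaluating $\al\phi=\phi\bt$ at $x$, the left side gives $x(\al\phi)=(x\al)\phi=y\phi$, which is defined, so the right side $x(\phi\bt)$ is defined too, which forces $x\in\dom(\phi)$. This proves $\dom(\al)\subseteq\dom(\phi)$ and hence $\spa(\al)\subseteq\dom(\phi)$. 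Finally, whenever $x\ara y$ the same evaluation yields $(x\phi)\bt=x(\phi\bt)=x(\al\phi)=y\phi$; since $y\phi$ is defined, $x\phi\in\dom(\bt)$ and $(x\phi)\bt=y\phi$, that is, $x\phi\arb y\phi$. I expect (2) to be a short deduction once (1) is available, since all the real content is in the forward direction of (1).
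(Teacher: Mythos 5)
Your proposal is correct and follows essentially the same route as the paper's proof: constant richness produces a witness $\gamma\alpha\in S^1\alpha\setminus\{0\}$ with $\ima(\gamma\alpha)=\{y\}$ for the forward direction of (1), the backward direction is the same domain bookkeeping, and (2) is obtained by evaluating $\alpha\phi=\phi\beta$ at $x$ exactly as in the paper. No gaps.
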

\begin{proof}
Let $S$ be constant rich.
Suppose $\phi\in\pp^1(\al)$. Let $y\in\ima(\al)$, that is, $y=x\al$ for some $x\in\dom(\al)$.
Since $S$ is constant rich, there is $\gam\in S$ with $\ima(\gam)=\{x\}$. Then
$\ima(\gam\al)=\{y\}$, and so $\gam\al\in S^1\al\sm \{0\}$. Thus $(\gam\al)\phi\ne0$ (since $\phi\in\pp^1(\al)$),
which is only possible when $y\in\dom(\phi)$. Hence $\ima(\al)\subseteq\dom(\phi)$.

Conversely, suppose $\ima(\al)\subseteq\dom(\phi)$. Let $\mu\al\in S^1\al\sm \{0\}$. Since $\mu\al\ne0$, there is $x\in X$
such that $x(\mu\al)\ne0$. But then $x(\mu\al)=(x\mu)\al\in\ima(\al)\subseteq\dom(\phi)$, and so $x\in\dom((\mu\al)\phi)$. Thus
$(\mu\al)\phi\ne0$, and so $\phi\in\pp^1(\al)$. We have proved (1).

To prove (2), suppose $\phi\in\pp^1(\al)$ and $\al\phi=\phi\bt$ for some $\bt\in S$.
Let $x,y\in X$ and suppose that $x\ara y$. Then, since $\al\phi=\phi\bt$, we have
\begin{equation}
(x\phi)\bt=x(\phi\bt)=x(\al\phi)=(x\al)\phi=y\phi.\label{e1lgpa}
\end{equation}
By (1), $\ima(\al)\subseteq\dom(\phi)$, and so $y=x\al\in\dom(\phi)$.
Then, by (\ref{e1lgpa}), $x\phi\ne\mz$, which implies $x\in\dom(\phi)$.
It follows that $\spa(\al)\subseteq\dom(\phi)$. Moreover, by (\ref{e1lgpa}) again,
$(x\phi)\bt=y\phi\ne\mz$, and so $x\phi\arb y\phi$.
\end{proof}

\begin{lemma}\label{lspa}
Let $\al,\bt\in P(X)$ and let $\phi$ be an rp-homomorphism from $\Gamma(\al)$ to $\Gamma(\bt)$. Then
$\spa(\al)\subseteq\dom(\phi)$.
\end{lemma}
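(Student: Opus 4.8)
The plan is to verify the inclusion $\spa(\al)=\dom(\al)\cup\ima(\al)\subseteq\dom(\phi)$ directly from condition~(a) in the definition of an rp-homomorphism (Definition~\ref{drph}), by checking that an arbitrary element of the span lies in $\dom(\phi)$. Condition~(b) will play no role here: every vertex of $\Gamma(\al)$ belonging to $\spa(\al)$ is incident with at least one arc of $\Gamma(\al)$, and condition~(a) already forces the two endpoints of any arc into $\dom(\phi)$.

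First I would fix $z\in\spa(\al)$ and distinguish the two ways in which $z$ can belong to the span. If $z\in\dom(\al)$, set $y=z\al$; then $z\ara y$, that is, $(z,y)\in R_\al$ is an arc of $\Gamma(\al)$, so by condition~(a) both $z$ and $y$ lie in $\dom(\phi)$, and in particular $z\in\dom(\phi)$. If instead $z\in\ima(\al)$, write $z=x\al$ with $x\in\dom(\al)$; then $(x,z)\in R_\al$ is again an arc, so condition~(a) gives $x,z\in\dom(\phi)$, whence $z\in\dom(\phi)$.

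Since $z$ was an arbitrary element of $\spa(\al)$ and $z\in\dom(\phi)$ in both cases, we conclude $\spa(\al)\subseteq\dom(\phi)$. There is no genuine obstacle: the lemma is an immediate consequence of the requirement in condition~(a) that whenever $(x,y)$ is an arc of $\Gamma(\al)$, both $x$ and $y$ belong to the domain of the rp-homomorphism. The only point to be careful about is to account for the image elements of $\al$ as well as its domain elements, which is precisely the reason for the two-case split; this mirrors the conclusion $\spa(\al)\subseteq\dom(\phi)$ obtained semigroup-theoretically in Lemma~\ref{lgpa}(2).
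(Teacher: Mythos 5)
Your argument is correct and is essentially identical to the paper's proof: both split into the cases $z\in\dom(\al)$ and $z\in\ima(\al)$, exhibit an arc of $\Gamma(\al)$ incident with $z$, and invoke condition~(a) of Definition~\ref{drph} to conclude $z\in\dom(\phi)$. Nothing further is needed.
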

\begin{proof}
Let $x\in\spa(\al)$. If $x\in\dom(\al)$, then $x\ara x\al$, and so $x,x\al\in\dom(\phi)$ by Definition~\ref{drph}.
If $x\in\ima(\al)$, then $z\ara x$ for some $z\in\dom(\al)$, and so $z,x\in\dom(\phi)$. Hence $\spa(\al)\subseteq\dom(\phi)$.
\end{proof}

\begin{lemma}\label{laggb}
Let $S\leq P(X)$ such that $S$ is constant rich, let $\al,\bt\in S$ with $\al\ne0$, and $\phi\in S^1$.
Then $\al\phi=\phi\bt$ with $\phi\in\pp^1(\al)$
if and only if $\phi$ is an rp-homomorphism from $\Gamma(\al)$ to $\Gamma(\bt)$.
\end{lemma}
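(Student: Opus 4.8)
The statement is a biconditional, so the plan is to prove the two implications separately, after first recording the translation between the graph-theoretic language of Definition~\ref{drph} and the pointwise language of partial transformations. The crucial dictionary entry is that a vertex $x$ is \emph{terminal} in $\Gamma(\al)$ exactly when $x\notin\dom(\al)$ (equivalently $x\al=\mz$), and likewise for $\Gamma(\bt)$; this is what links condition~(b) of an rp-homomorphism to the adjoined zero $\mz$.

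For the forward implication I would assume $\al\phi=\phi\bt$ with $\phi\in\pp^1(\al)$ and verify conditions (a) and (b) of Definition~\ref{drph}. Condition~(a) is essentially immediate from Lemma~\ref{lgpa}(2): that lemma already gives $\spa(\al)\subseteq\dom(\phi)$ together with the arc-preservation property $x\ara y\imp x\phi\arb y\phi$, so whenever $(x,y)\in R_\al$ we obtain $x,y\in\dom(\phi)$ and $(x\phi,y\phi)\in R_\bt$. For condition~(b) I would argue directly: if $x$ is terminal in $\Gamma(\al)$ and $x\in\dom(\phi)$, then $x\al=\mz$ gives $x(\al\phi)=(x\al)\phi=\mz$, so by $\al\phi=\phi\bt$ we have $(x\phi)\bt=x(\phi\bt)=\mz$; since $x\in\dom(\phi)$ forces $x\phi\ne\mz$, this means $x\phi\notin\dom(\bt)$, i.e.\ $x\phi$ is terminal in $\Gamma(\bt)$.

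For the converse I would assume $\phi$ is an rp-homomorphism from $\Gamma(\al)$ to $\Gamma(\bt)$ and produce both conclusions. First, Lemma~\ref{lspa} yields $\spa(\al)\subseteq\dom(\phi)$, hence $\ima(\al)\subseteq\dom(\phi)$, and since $\al\ne0$ Lemma~\ref{lgpa}(1) then gives $\phi\in\pp^1(\al)$. It remains to check the equality $\al\phi=\phi\bt$ pointwise, that is $x(\al\phi)=x(\phi\bt)$ for every $x\in X$. When $x\in\dom(\al)$ we have $(x,x\al)\in R_\al$, so condition~(a) gives $x\phi\in\dom(\bt)$ and $(x\phi)\bt=(x\al)\phi$, whence $x(\phi\bt)=(x\al)\phi=x(\al\phi)$. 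When $x\notin\dom(\al)$ we have $x(\al\phi)=\mz$, and I would split on whether $x\in\dom(\phi)$: if $x\notin\dom(\phi)$ then $x\phi=\mz$ and $x(\phi\bt)=\mz$ trivially, while if $x\in\dom(\phi)$ condition~(b) makes $x\phi$ terminal in $\Gamma(\bt)$, so $(x\phi)\bt=\mz=x(\phi\bt)$. In every case the two sides agree, giving $\al\phi=\phi\bt$.

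The computations are short; the one genuinely delicate point is condition~(b) and its role at vertices outside $\dom(\al)$. This is precisely where the \emph{restrictive} strengthening of partial homomorphism is needed: an ordinary partial homomorphism could send a terminal vertex of $\Gamma(\al)$ to a non-terminal vertex of $\Gamma(\bt)$, breaking the equality $\al\phi=\phi\bt$ at that point (one side being $\mz$ and the other not). Keeping careful track of the $\mz$-convention in both directions, so that ``terminal'' and ``$x\phi\notin\dom(\bt)$'' are used interchangeably, is the main thing to get right.
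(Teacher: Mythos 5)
Your proof is correct and follows essentially the same route as the paper's: condition (a) via Lemma~\ref{lgpa}(2), condition (b) by the direct computation $(x\phi)\bt=(x\al)\phi=\mz$, and the converse by the same pointwise case split on $x\in\dom(\al)$, $x\in\dom(\phi)$, together with Lemmas~\ref{lspa} and~\ref{lgpa}(1) to get $\phi\in\pp^1(\al)$. No gaps.
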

\begin{proof}
Suppose $\al\phi=\phi\bt$ with $\phi\in\pp^1(\al)$.
Let $x,y\in X$ and suppose that $x\ara y$. Then $x\phi\arb y\phi$ by Lemma~\ref{lgpa},
and so $\phi$ satisfies (a) of Definition~\ref{drph}.
Suppose that $x$ is a terminal vertex of $\Gamma(\al)$ and $x\in\dom(\phi)$. Then $x\phi\in X$ and $x\al=\mz$.
Since $\al\phi=\phi\bt$, we have $(x\phi)\bt=(x\al)\phi=\mz\phi=\mz$, and so $x\phi$ is a terminal vertex in $\Gamma(\bt)$.
Hence $\phi$ satisfies (b) of Definition~\ref{drph}. Thus $\phi$ is an rp-homomorphism from $\Gamma(\al)$ to $\Gamma(\bt)$.

Conversely, suppose that $\phi$ is an rp-homomorphism from $\Gamma(\al)$ to $\Gamma(\bt)$. Let $x\in X$.
Suppose $x\notin\dom(\al)$. Then $x(\al\phi)=(x\al)\phi=\mz\phi=\mz$. If $x\notin\dom(\phi)$, then $x(\phi\bt)=(x\phi)\bt=\mz\bt=\mz$.
If $x\in\dom(\phi)$, then, by (b) of Definition~\ref{drph}, $x\phi$ is a terminal vertex in $\Gamma(\bt)$, and so
$x(\phi\bt)=(x\phi)\bt=\mz$. Hence, in both cases, $x(\al\phi)=x(\phi\bt)$.

Suppose $x\in\dom(\al)$ and let $y=x\al\in X$. Then $x\ara y$, and so, by Definition~\ref{drph},
$x,y\in\dom(\phi)$ and $x\phi\arb y\phi$. Hence
$x(\al\phi)=(x\al)\phi=y\phi$ and $x(\phi\bt)=(x\phi)\bt=y\phi$, and so $x(\al\phi)=x(\phi\bt)$.
We have proved that $\al\phi=\phi\bt$. Finally, since $\phi$ is an rp-homomorphism from $\Gamma(\al)$ to $\Gamma(\bt)$,
we have that $\spa(\al)\subseteq\dom(\phi)$ by Lemma~\ref{lspa}, and so $\phi\in\pp^1(\al)$ by Lemma~\ref{lgpa}.
\end{proof}

\begin{theorem}\label{tconrph}
Let $S\leq P(X)$ such that $S$ is constant rich, let $\al,\bt\in S$.
Then $\al\con\bt$ in $S$ if and only if there are $\phi,\psi\in S^1$
such that $\phi$ is an rp-homomorphism from $\Gamma(\al)$ to $\Gamma(\bt)$
and $\psi$ is an rp-homomorphism from $\Gamma(\bt)$ to $\Gamma(\al)$.
\end{theorem}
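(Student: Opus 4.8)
The plan is to reduce the theorem to Lemma~\ref{laggb}, since that lemma already converts the algebraic condition $\al\phi=\phi\bt$ with $\phi\in\pp^1(\al)$ into the statement that $\phi$ is an rp-homomorphism from $\Gamma(\al)$ to $\Gamma(\bt)$, and symmetrically for $\psi$. By Definition~\ref{dcon}, $\al\con\bt$ means there exist $g\in\pp^1(\al)$ and $h\in\pp^1(\bt)$ with $\al g=g\bt$ and $\bt h=h\al$. The two conditions are symmetric in form, so once one direction is handled the other follows by interchanging the roles of $\al$ and $\bt$. Thus the real content is to match up the pair $(g,h)$ witnessing conjugacy with the pair $(\phi,\psi)$ of rp-homomorphisms.

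First I would dispose of the degenerate case $\al=0$ or $\bt=0$, which Lemma~\ref{laggb} excludes by hypothesis. By Lemma~\ref{lccz} we know $[0]_c=\{0\}$, so $\al\con\bt$ with one of them zero forces both to be zero; and $\Gamma(0)$ has empty arc set with every vertex terminal, so the empty map (or any suitable map in $S^1$) is trivially an rp-homomorphism in both directions precisely when both transformations are $0$. This is a quick special case to state and clear out of the way, after which I may assume $\al\ne0$ and $\bt\ne0$, exactly the setting where Lemma~\ref{laggb} applies directly.

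For the main case, assume $\al,\bt\ne0$. Going forward: if $\al\con\bt$, take $g\in\pp^1(\al)$ with $\al g=g\bt$ and $h\in\pp^1(\bt)$ with $\bt h=h\al$. Applying Lemma~\ref{laggb} to the pair $(\al,\bt,g)$ gives that $g$ is an rp-homomorphism from $\Gamma(\al)$ to $\Gamma(\bt)$; applying the same lemma with the roles of $\al$ and $\bt$ swapped to the pair $(\bt,\al,h)$ gives that $h$ is an rp-homomorphism from $\Gamma(\bt)$ to $\Gamma(\al)$. Setting $\phi=g$ and $\psi=h$ completes this direction. For the converse, given rp-homomorphisms $\phi$ from $\Gamma(\al)$ to $\Gamma(\bt)$ and $\psi$ from $\Gamma(\bt)$ to $\Gamma(\al)$, Lemma~\ref{laggb} (again used in both its orientations) yields $\al\phi=\phi\bt$ with $\phi\in\pp^1(\al)$ and $\bt\psi=\psi\al$ with $\psi\in\pp^1(\bt)$, which is exactly the witnessing data in Definition~\ref{dcon}, so $\al\con\bt$.

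I do not expect a genuine obstacle here, since the theorem is essentially a repackaging of Lemma~\ref{laggb} using the symmetry of the conjugacy condition. The only point requiring minor care is the bookkeeping of which transformation plays the source and which the target when invoking the lemma twice, and making sure the membership conditions $g\in\pp^1(\al)$ and $h\in\pp^1(\bt)$ line up with the domains stipulated in Definition~\ref{dcon}. The zero case should be stated explicitly so that the hypothesis $\al\ne0$ in Lemma~\ref{laggb} is never violated.
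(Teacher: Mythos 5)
Your proposal is correct and follows essentially the same route as the paper's proof: dispose of the zero case via Lemma~\ref{lccz} (and the observation that rp-homomorphisms in both directions force $\al$ and $\bt$ to be simultaneously zero or nonzero), then apply Lemma~\ref{laggb} in both orientations to translate the witnesses $g,h$ of Definition~\ref{dcon} into the pair of rp-homomorphisms and back.
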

\begin{proof}
Suppose $\al\con\bt$.
If $\al=0$, then $\bt=0$ (since $[0]_c=\{0\}$), and so $\phi=\id_X\in S^1$ is an rp-homomorphism
from $\Gamma(\al)$ to $\Gamma(\bt)$. Suppose $\al\ne0$. Since $\al\con\bt$, there is $\phi\in\pp^1(\al)$
such that $\al\phi=\phi\bt$, and so
$\phi$ is an rp-homomorphism from $\Gamma(\al)$ to $\Gamma(\bt)$ by Lemma~\ref{laggb}.
A desired $\psi\in S^1$ exists by symmetry.

Conversely, suppose that desired $\phi$ and $\psi$ exist. If $x\ara y$ then $x\phi\arb y\phi$,
and if $x\arb y$ then $x\psi\ara y\psi$. It follows that either $\al=\bt=0$ or $\al,\bt\ne0$. In the former
case, we clearly have $\al\con\bt$. Suppose $\al,\bt\ne0$.
Then, by Lemma~\ref{laggb},
$\al\phi=\phi\bt$ with $\phi\in\pp^1(\al)$ and $\bt\psi=\psi\al$ with $\psi\in\pp^1(\bt)$, which implies
$\al\con\bt$.
\end{proof}

Let $\al,\bt\in T(X)$. Then the graph $\Gamma(\al)$ has no terminal vertices
(if $x\in X$, then $x\ara x\al$), and so every homomorphism from $\Gamma(\al)$ to $\Gamma(\bt)$ is restrictive. This observation
and Theorem~\ref{tconrph} give us the following corollary.

\begin{cor}\label{cconh}
Let $S\leq T(X)$ such that $S$ contains all constants, and let $\al,\bt\in S$. Then $\al\con\bt$ in $S$ if and only if there are $\phi,\psi\in S^1$
such that $\phi$ is a homomorphism from $\Gamma(\al)$ to $\Gamma(\bt)$ and
$\psi$ is a homomorphism from $\Gamma(\bt)$ to $\Gamma(\al)$.
\end{cor}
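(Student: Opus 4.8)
The plan is to deduce the corollary directly from Theorem~\ref{tconrph}, the only new work being to show that, for transformations in $T(X)$, the restrictive partial homomorphisms between the associated digraphs are exactly the ordinary (total) homomorphisms. First I would check that Theorem~\ref{tconrph} applies: since $S$ contains all constants, for every $x\in X$ there is an element of $S$ with image $\{x\}$, so $S$ is constant rich in the sense of Definition~\ref{dcr}, and $S\leq T(X)\leq P(X)$. Thus $\al\con\bt$ in $S$ if and only if there are $\phi,\psi\in S^1$ with $\phi$ an rp-homomorphism from $\Gamma(\al)$ to $\Gamma(\bt)$ and $\psi$ an rp-homomorphism from $\Gamma(\bt)$ to $\Gamma(\al)$.

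It then remains to replace ``rp-homomorphism'' by ``homomorphism'' in this characterization, and two facts make the notions coincide here. First, as noted just before the statement, for $\al\in T(X)$ we have $\dom(\al)=X$, so every vertex $x$ satisfies $x\ara x\al$; hence $\Gamma(\al)$ has no terminal vertices, and the same holds for $\Gamma(\bt)$. Second, every element of $S^1$ is a total transformation of $X$: those in $S$ are total since $S\leq T(X)$, and the adjoined identity acts as $\id_X$, so any $\phi\in S^1$ has $\dom(\phi)=X$. Given these, for a total $\phi\in S^1$ condition (b) of Definition~\ref{drph} is vacuous (there are no terminal vertices), while condition (a) reduces, since $\dom(\phi)=X$, to the implication that $(x,y)\in R_\al$ gives $(x\phi,y\phi)\in R_\bt$, that is, to $\phi$ being a homomorphism from $\Gamma(\al)$ to $\Gamma(\bt)$. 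Hence $\phi$ is an rp-homomorphism from $\Gamma(\al)$ to $\Gamma(\bt)$ if and only if it is a homomorphism from $\Gamma(\al)$ to $\Gamma(\bt)$, and symmetrically for $\psi$. Substituting this equivalence into the characterization from Theorem~\ref{tconrph} yields the corollary.

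Since the heavy lifting has already been done in Theorem~\ref{tconrph}, I do not expect a genuine obstacle. The only point deserving a little care is to record both directions of the equivalence ``rp-homomorphism $=$ homomorphism'' cleanly: the totality of the elements of $S^1$ is what makes condition (a) collapse onto the homomorphism condition, and the absence of terminal vertices is what renders condition (b) automatic. Keeping track of which hypothesis drives which half of the equivalence is essentially the whole of the matter.
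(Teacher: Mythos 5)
Your proposal is correct and follows essentially the same route as the paper: apply Theorem~\ref{tconrph} and observe that, since $\Gamma(\al)$ and $\Gamma(\bt)$ have no terminal vertices and every element of $S^1\leq T(X)^1$ is total, rp-homomorphisms and homomorphisms coincide here. You are slightly more explicit than the paper in spelling out the totality of elements of $S^1$ (needed for the direction ``rp-homomorphism $\Rightarrow$ homomorphism''), but this is the same argument.
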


\section{Connected Partial Transformations}\label{scpt}
\setcounter{equation}{0}
\setcounter{figure}{0}

In this section, we introduce the concept of connected partial transformation. The definitions and results of this section
will be crucial in characterizing conjugacy in various semigroups of transformations.

\begin{defi}\label{dbas}
{\rm
Let $\ldots,x_{-2},x_{-1},x_0,x_1,x_2,\ldots$ be pairwise distinct elements of $X$. The following elements of $P(X)$
will be called \emph{basic} partial transformations on $X$.
\begin{itemize}
  \item [(1)] A \emph{cycle} of length $k$ ($k\geq1$), written $(x_0\,x_1\ldots\, x_{k-1})$,
is an element of $P(X)$ defined by the digraph $x_0\ra x_1\ra \cdots\ra x_{k-1}\ra x_0$.
  \item [(2)] A \emph{right ray}, written $[x_0\,x_1\,x_2\ldots\ran$,
is an element of $P(X)$ defined by the digraph \[x_0\ra x_1\ra x_2\ra \cdots.\]
  \item [(3)] A \emph{double ray}, written $\lan\ldots\,x_{-1}\,x_0\,x_1\ldots\ran$,
is an element of $P(X)$ defined by the digraph \[\cdots\ra x_{-1}\ra x_0\ra x_1\ra\cdots.\]
  \item [(4)] A \emph{left ray}, written $\lan\ldots\, x_2\,x_1\,x_0]$,
is an element of $P(X)$ defined by the digraph \[\cdots \ra x_2\ra x_1\ra x_0.\]
  \item [(5)] A \emph{chain} of length $k$ ($k\geq1$), written $[x_0\,x_1\ldots\, x_k]$,
is an element of $P(X)$ defined by the digraph $x_0\ra x_1\ra \cdots\ra x_k$.
\end{itemize}
By a \emph{ray} we will mean a double, right, or left ray.
}
\end{defi}

We note the following.
\begin{itemize}
  \item [(i)] All basic partial transformations are injective.
  \item [(ii)] The span of a basic partial transformation is exhibited by the notation. For example, the span
of the right ray $[1\,2\,3\ldots\ran$ is $\{1,2,3,\ldots\}$.
  \item [(iii)] The left bracket in ``$\sigma=[x\ldots$'' indicates that $x\notin\ima(\sigma)$; while the right bracket
in ``$\sigma=\ldots\,x]$'' indicates that $x\notin\dom(\sigma)$. For example, for the chain $\sigma=[1\,2\,3\,4]$,
$\dom(\sigma)=\{1,2,3\}$ and $\ima(\sigma)=\{2,3,4\}$.
  \item [(iv)] A cycle $(x_0\,x_1\ldots\, x_{k-1})$ differs from the corresponding cycle in the symmetric group
of permutations on $X$ in that the former is undefined for every $x\in X\sm \{x_0,x_1,\ldots,x_{k-1}\}$ while the latter
is defined on and fixes every such $x$.
\end{itemize}

\begin{defi}\label{dcon1}
{\rm
An element $\gamma\in P(X)$ is called \emph{connected} if $\gamma\ne0$ and for all $x,y\in\spa(\gamma)$,
$x\gamma^k=y\gamma^m\ne\mz$ for some integers $k,m\geq0$ (where $\gamma^0=\id_X$).
}
\end{defi}

We note that a nonzero $\gamma\in P(X)$ is connected if and only if the underlying undirected graph of the
digraph $\Gamma^0(\gamma)$ is connected,
where $\Gamma^0(\gamma)$ is the digraph $\Gamma(\gamma)$
with the isolated vertices removed, and that all basic partial transformations are connected.

\begin{defi}\label{ddis}
{\rm
Let $\alpha,\beta\in P(X)$. We say that $\beta$ is \emph{contained} in $\alpha$ (or $\al$ \emph{contains} or \emph{has} $\bt$),
and write $\bt\sqs\al$,
if $\dom(\bt)\subseteq\dom(\al)$ and $x\bt=x\al$ for every $x\in\dom(\bt)$. In other words,
$\bt\sqs\al$ iff $\bt=\vep\al$ where $\vep$ is the identity on the domain of $\bt$.
We say that $\al$ and $\bt$ are \emph{disjoint} if $\dom(\al)\cap\dom(\bt)=\emptyset$;
they are \emph{completely disjoint} if $\spa(\al)\cap\spa(\bt)=\emptyset$.
}
\end{defi}

For example, the right ray $[3\,4\,5\,6\ldots\ran$ and chain $[0\,1\,2\,5]$ in $P(\mathbb Z)$
are disjoint
but not completely disjoint. Their join $[3\,4\,5\,6\ldots\ran\jo[0\,1\,2\,5]$ (see Definition~\ref{djoi} below)
is connected.

\begin{defi}\label{djoi}
{\rm
Let $C$ be a set of pairwise disjoint elements of $P(X)$. The \emph{join} of the elements of $C$,
denoted $\join_{\gam\in C}\gam$, is an element of $P(X)$ defined by
\[
x\left(\join_{\gam\in C}\gam\right)=
\left\{\begin{array}{ll}
x\gam & \mbox{if $x\in\dom(\gam)$ for some $\gam\in C$,}\\
\mz & \mbox{otherwise.}
\end{array}\right.
\]
If $C=\{\gam_1,\gam_2,\ldots,\gam_k\}$ is finite, we may write $\join_{\gam\in C}\gam$ as $\gam_1\jo\gam_2\jo\cdots\jo\gam_k$.
}
\end{defi}

\begin{prop}\label{pdec}
Let $\al\in P(X)$ with $\al\ne0$. Then there exists a unique set $C$ of pairwise completely disjoint, connected transformations
contained in $\al$ such that $\al=\join_{\gam\in C}\gam$.
\end{prop}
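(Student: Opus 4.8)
The plan is to build $C$ as the set of ``connected components'' of $\al$ and then to show this set is forced. First I would define an equivalence relation $\approx$ on the span $\spa(\al)$ by declaring $x\approx y$ when $x$ and $y$ are joined by a path in the underlying undirected graph of $\Gamma(\al)$; by the remark following Definition~\ref{dcon1} this is the natural connectivity relation. Since every element of $\spa(\al)$ is a non-isolated vertex of $\Gamma(\al)$, the classes of $\approx$ form a partition $\{X_i\}_{i\in I}$ of $\spa(\al)$ into the connected components of $\Gamma^0(\al)$. For each $i$ I would set $\gam_i=\vep_i\al$, the restriction of $\al$ to $\dom(\al)\cap X_i$ (where $\vep_i$ is the identity on $\dom(\al)\cap X_i$), so that $\gam_i\sqs\al$ by Definition~\ref{ddis}, and put $C=\{\gam_i:i\in I\}$.

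The routine verifications come next. Because $x\ra x\al$ is an arc, any $x\in\dom(\al)\cap X_i$ has $x\al\in X_i$, so $\ima(\gam_i)\subseteq X_i$ and hence $\spa(\gam_i)\subseteq X_i$; conversely every vertex of $X_i$ either lies in $\dom(\al)$ or is the image of such a vertex in the same class, giving $\spa(\gam_i)=X_i$. In particular $\gam_i\ne0$, and the underlying undirected graph of $\Gamma^0(\gam_i)$ is the connected induced subgraph on $X_i$, so $\gam_i$ is connected by the remark after Definition~\ref{dcon1}. Distinct classes are disjoint, so $\spa(\gam_i)\cap\spa(\gam_j)=\emptyset$ for $i\ne j$, i.e.\ the $\gam_i$ are pairwise completely disjoint. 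Finally, checking the value of $\join_{\gam\in C}\gam$ at each $x$ (splitting according to whether $x\in\dom(\al)$) against Definition~\ref{djoi} yields $\join_{\gam\in C}\gam=\al$, which establishes existence.

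For uniqueness, suppose $C'$ is any set of pairwise completely disjoint, connected transformations contained in $\al$ with $\al=\join_{\gam\in C'}\gam$. From $\gam\sqs\al$ and the join formula one first checks $\dom(\al)=\bigcup_{\gam\in C'}\dom(\gam)$ and $\spa(\al)=\bigcup_{\gam\in C'}\spa(\gam)$, so the spans of the members of $C'$ partition $\spa(\al)$. The crux is to prove that each $\gam\in C'$ satisfies $\spa(\gam)=X_i$ for some component $X_i$. Since $\gam\sqs\al$, every arc of $\Gamma(\gam)$ is an arc of $\Gamma(\al)$, so connectedness of $\gam$ forces $\spa(\gam)$ into a single class $X_i$. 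To see that $\spa(\gam)=X_i$, I would observe that the spans lying in $X_i$ partition $X_i$ into blocks, yet any arc $x\ra x\al$ of $\Gamma(\al)$ inside $X_i$ has $x\in\dom(\del)$ for some $\del\in C'$, whence $x\al=x\del\in\spa(\del)$, so the arc stays inside one block. As $X_i$ is connected, it cannot be split into more than one block, so $X_i$ is a single span. This gives a bijection $\gam\mapsto\spa(\gam)$ between $C'$ and $\{X_i\}_{i\in I}$; and since $x\in\dom(\al)\cap\spa(\gam)$ lies in $\dom(\del)$ for some $\del$ forcing $\spa(\del)=\spa(\gam)$ and hence $\del=\gam$, the relation $\gam\sqs\al$ pins down $\gam$ on $\dom(\al)\cap\spa(\gam)$, so $\gam$ coincides with the corresponding $\gam_i$. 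Thus $C'=C$.

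I expect the main obstacle to be exactly this last point: showing that a connected piece cannot occupy only part of a component, i.e.\ that the decomposition is maximal. The observation that makes it work is that $\gam\sqs\al$ turns the arcs of each piece into genuine arcs of $\al$, so an arc of $\al$ can never bridge two different pieces; combined with the connectivity of each $X_i$, this forces every component to be covered by a single piece.
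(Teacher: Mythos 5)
Your proof is correct, and the decomposition itself is the one the paper uses: the paper defines an equivalence $R$ on $\dom(\al)$ by $(x,y)\in R$ iff $x\al^k=y\al^m\ne\mz$ for some $k,m\geq0$ and restricts $\al$ to the $R$-classes, which (as the remark after Definition~\ref{dcon1} already records) yields exactly your weakly connected components, so the existence half is the same argument in different clothing. Where you genuinely diverge is the crux of uniqueness---showing a connected piece $\del\in C'$ cannot occupy only part of a component $X_i$. The paper proves $\dom(\del)\subseteq\dom(\gam_x)$ and $\dom(\gam_x)\subseteq\dom(\del)$ separately, the latter by contradiction: it picks $w\in\dom(\gam_x)\sm\dom(\del)$, runs the forward orbits of $w$ and of a point of $\dom(\del)$ to a common vertex $u$, shows the two directed paths lie in $\dom(\del)$ and $\dom(\del_2)$ for distinct $\del,\del_2\in C'$, and lands on $u\in\ima(\del)\cap\ima(\del_2)$, contradicting complete disjointness. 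Your argument is more structural: the spans of the members of $C'$ lying in $X_i$ partition $X_i$ into blocks, no arc of $\Gamma(\al)$ crosses between blocks (since $x\in\dom(\del)$ forces $x\al=x\del\in\spa(\del)$), and a connected graph admits no partition into two or more nonempty blocks with no edge between them. Both hinge on the same observation---complete disjointness forbids an arc from bridging two pieces---but yours packages it as a single connectivity statement and avoids the explicit path-tracing, at the cost of first establishing that the spans of $C'$ really do partition $\spa(\al)$ (which you correctly extract from the join formula). Either route is sound.
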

\begin{proof}
Define a relation $ R$ on $\dom(\al)$ by: $(x,y)\in R$ if $x\al^k=y\al^m\ne\mz$ for some integers $k,m\geq0$.
It is clear that $ R$ is an equivalence relation on $\dom(\al)$. Let $J$ be a complete set of representatives of the
equivalence classes of $ R$. For every $x\in J$, let $\gam_x=\al|_{x R}$, where $xR$ is the $R$-equivalence
class of $x$. By the definition of $ R$, each such $\gam_x$ is connected, and $\gam_x$ and $\gam_y$ are completely disjoint
for all $x,y\in J$ with $x\ne y$. Then the set $C=\{\gam_x:x\in J\}$ consists of pairwise completely disjoint, connected
transformations contained in $\al$, and $\al=\join_{\gam\in C}\gam$.

Suppose $D$ is any set of pairwise completely disjoint, connected transformations contained in $\al$ such that
$\al=\join_{\del\in D}\del$. Let $\del\in D$ and let $y\in\dom(\del)$. Then $y\in x R$ for some $x\in J$.
We want to prove that $\del=\gam_x$. Let $z\in\dom(\del)$. Since $\del$ is connected, $y\del^k=z\del^m\ne\mz$
for some $k,m\geq0$. But then, since $\del$ is contained in $\al$, we have $y\al^k=z\al^m\ne\mz$. Hence
$(y,z)\in R$, and so $z\in yR=xR=\dom(\gam_x)$. We have proved that $\dom(\del)\subseteq\dom(\gam_x)$.

Suppose to the contrary that $\dom(\gam_x)$ is not included in $\dom(\del)$, that is, that there is $w\in\dom(\gam_x)$
such that $w\notin\dom(\del)$. Since $\gam_x$ is connected, $y\gam_x^p=w\gam_x^q\ne\mz$ for some $p,q\geq0$.
Let $y_i=y\gam_x^i=y\al^i$ and $w_j=w\gam_x^j=w\al^j$ for $i=0,1,\ldots,p$ and $j=0,1,\ldots,q$. Then $y_p=w_q$ and
let $u=y_p=w_q$. With this notation, in the digraph $\Gamma(\al)$, we have
\[
y=y_0\ra y_1\ra\cdots\ra y_p=u\textnormal{ and }w=w_0\ra w_1\ra\cdots\ra w_q=u.
\]
We claim that $\{y_0,y_1,\ldots,y_{p-1}\}\subseteq\dom(\del)$. If not, then, since $y_0=y\in\dom(\delta)$,
there would be $i\in\{0,\ldots,p-2\}$ such that $y_i\in\dom(\del)$ and $y_{i+1}\notin\dom(\del)$. But
$y_{i+1}\in\dom(\al)$, and so $y_{i+1}\in\dom(\del_1)$ for some $\del_1\in D$. We would then have $\del\ne\del_1$
and $y_{i+1}\in\spa(\del)\cap\spa(\del_1)$, which is impossible since $\del$ and $\del_1$ are completely disjoint.
The claim has been proved.
Since $w\in\dom(\gam_x)\subseteq\dom(\al)$, there is $\delta_2\in D$ such that $w\in\dom(\delta_2)$.
By the foregoing argument applied to $\del_2$ and $\{w_0,w_1,\ldots,w_{q-1}\}$, we obtain
$\{w_0,w_1,\ldots,w_{q-1}\}\subseteq\dom(\del_2)$. Thus
\[
y_{p-1}\del=y_{p-1}\al=y_p=u=w_q=w_{q-1}\al=w_{q-1}\del_2.
\]
Thus we have $\del\ne\del_2$ with $u\in\ima(\del)\cap\ima(\del_2)$, which is a contradiction since
$\del$ and $\del_2$ are completely disjoint. We have proved that $\dom(\gam_x)\subseteq\dom(\del)$, and so
$\dom(\del)=\dom(\gam_x)$. Now for all $v\in\dom(\del)=\dom(\gam_x)$, we have
$v\del=v\al=v\gam_x$, and so $\del=\gam_x\in C$. We have proved that $D\subseteq C$.

For the reverse inclusion, let $\gam_x$ be an arbitrary element of $C$. Select $y\in\dom(\gam_x)$. Then,
there is $\del\in D$ such that $y\in\dom(\del)$. By the foregoing argument, we have $\del=\gam_x$, and so
$\gam_x\in D$. Hence $C\subseteq D$, and so $D=C$. We have proved that the set $C$ is unique, which completes the proof.
\end{proof}

Any element of the set $C$ from Proposition~\ref{pdec} will be called a \emph{connected component} of $\al$.
We note that the connected components of $\al$ correspond to the connected components of the underlying
undirected graph of $\Gamma(\al)$ that are not isolated vertices.

\begin{defi}\label{dmax}
{\rm
Let $\al\in P(X)$ and let $\mu$ be a basic partial transformation contained in $\al$.
We say that $\mu$ is \emph{maximal} in $\al$ if for every $x\in\spa(\mu)$, $x\notin\dom(\mu)$ implies $x\notin\dom(\al)$,
and $x\notin\ima(\mu)$ implies $x\notin\ima(\al)$. Note that if $\mu$ is a cycle or a double ray, then
$\mu$ is always maximal in $\al$.
}
\end{defi}

For example, consider $\al=[3\,4\,5\,6\ldots\ran\jo[0\,1\,2\,5]\in P(\mathbb Z)$. Then $\al$ contains infinitely many right rays,
for example $[2\,5\,6\,7\ldots\ran$, but only two of them, namely $[3\,4\,5\,6\ldots\ran$ and $[0\,1\,2\,5\,6\,7\ldots\ran$
are maximal. Also, $\al$ contains infinitely many chains, for example $[3\,4\,5\,6]$, but none of them is maximal.

We will now establish which combinations of basic partial transformations can occur
in a connected element of $P(X)$.

\begin{lemma}\label{lcom}
Let $\gam\in P(X)$ be connected.
\begin{itemize}
  \item [\rm(1)] If $\gam$ has a cycle $(x_0\,x_1\ldots\,x_{k-1})$, then for every $x\in\dom(\gam)$,
$x\gam^m=x_0$ for some $m\geq0$.
  \item [\rm(2)] If $\gam$ has a right ray $[x_0\,x_1\,x_2\ldots\,\ran$ or a double ray
$\lan\ldots\,x_{-1}\,x_0\,x_1\ldots\ran$, then for every $x\in\dom(\gam)$,
$x\gam^m=x_i$ for some $m,i\geq0$.
  \item [\rm(3)] If $\gam$ has a maximal chain $[x_k\ldots\,x_1\,x_0]$ or
a maximal left ray $\lan\ldots\,x_2\,x_1\,x_0]$, then for every $x\in\spa(\gam)$,
$x\gam^m=x_0$ for some $m\geq0$.
\end{itemize}
\end{lemma}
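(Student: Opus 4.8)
The plan is to prove all three parts through one mechanism: connectedness lets me route an arbitrary vertex to the distinguished vertex $x_0$, and then the local shape of the basic partial transformation contained in $\gam$ finishes the job. Concretely, given $x$ (in $\dom(\gam)$ for (1) and (2), in $\spa(\gam)$ for (3)), I would apply Definition~\ref{dcon1} to the pair $x,x_0\in\spa(\gam)$ to obtain integers $p,q\geq0$ with $x\gam^p=x_0\gam^q\neq\mz$. Everything then comes down to identifying $x_0\gam^q$ from the cycle, ray, or chain in question.

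Part (2) is the most direct. Since the right ray $[x_0\,x_1\,x_2\ldots\ran$ (resp.\ the double ray $\lan\ldots\,x_{-1}\,x_0\,x_1\ldots\ran$) is contained in $\gam$, the forward orbit of $x_0$ is preserved, so $x_0\gam^q=x_q$ with $q\geq0$. Thus $x\gam^p=x_q$, which is exactly the assertion. For part (1), the cycle $(x_0\,x_1\ldots\,x_{k-1})\sqs\gam$ gives $x_0\gam^q=x_{q'}$ with $q'=q\bmod k$; this vertex is again on the cycle, so a further $k-q'$ forward steps return to $x_0$, that is $x_{q'}\gam^{\,k-q'}=x_0$, and hence $x\gam^{\,p+(k-q')}=x_0$. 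These are routine cyclic-index manipulations that I would not expand in detail.

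The heart of the lemma is part (3), and it is the only place the maximality hypothesis enters. In both the maximal chain $[x_k\ldots\,x_1\,x_0]$ and the maximal left ray $\lan\ldots\,x_2\,x_1\,x_0]$, the right bracket records that $x_0\notin\dom(\mu)$, so $x_0$ is terminal in the basic transformation $\mu$; but on its own this does not prevent $\gam$ from being defined at $x_0$. Maximality is exactly the tool that removes this possibility: since $x_0\in\spa(\mu)$ and $x_0\notin\dom(\mu)$, Definition~\ref{dmax} yields $x_0\notin\dom(\gam)$, and therefore $x_0\gam=\mz$. Substituting into $x\gam^p=x_0\gam^q\neq\mz$, the right-hand side is nonzero only when $q=0$ (every $q\geq1$ gives $x_0\gam^q=\mz$); hence $x_0\gam^q=x_0$ and $x\gam^p=x_0$, as claimed.

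I expect the difficulty to be bookkeeping rather than conceptual: in part (1), keeping the cyclic indices consistent when the two exponents are combined, and in part (3), stating cleanly the small but essential step that $x_0\gam^q\neq\mz$ forces $q=0$. The single conceptual point is that maximality upgrades ``terminal in $\mu$'' to ``terminal in $\gam$''; once $x_0$ is a genuine sink of $\Gamma(\gam)$, connectedness applied to the pair $(x,x_0)$ drives every vertex into $x_0$.
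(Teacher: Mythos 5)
Your proposal is correct and follows essentially the same route as the paper's proof: apply connectedness to the pair $(x,x_0)$ to get $x\gam^p=x_0\gam^q\ne\mz$, then read off $x_0\gam^q$ from the cycle, ray, or chain, with maximality forcing $q=0$ in part (3). The only difference is cosmetic --- you make explicit (via Definition~\ref{dmax}) the step that $x_0\notin\dom(\gam)$, which the paper leaves implicit.
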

\begin{proof}
Suppose $\gam$ has a cycle $(x_0\,x_1\ldots\,x_{k-1})$ and let $x\in\dom(\gam)$. Since $\gam$ is connected,
$x\gam^p=x_0\gam^q$ for some $p,q\geq0$. Since $x_0$ lies on the cycle $(x_0\,x_1\ldots\,x_{k-1})$,
we may assume that $0\leq q\leq k-1$. Thus for $m=p+k-q$, we have
\[
x\gam^m=x\gam^{p+k-q}=(x\gam^p)\gam^{k-q}=(x_0\gam^q)\gam^{k-q}=x_q\gam^{k-q}=x_0.
\]

Suppose $\gam$ has a right ray $[x_0\,x_1\,x_2\ldots\ran$ and let $x\in\dom(\gam)$. Since $\gam$ is connected,
$x\gam^m=x_0\gam^i=x_i$ for some $m,i\geq0$. A proof in the case of a double ray is the same.

Suppose $\gam$ has a chain $[x_k\ldots\,x_1\,x_0]$ and let $x\in\spa(\gam)$. Since $\gam$ is connected,
$x\gam^p=x_0\gam^q\ne\mz$ for some $p,q\geq0$. Note that $q$ must be $0$ (since $x_0\gam^q=\mz$ for every $q\geq1$).
Thus $x\gam^p=x_0\gam^0=x_0$. The proof in the case of a maximal left ray is the same.
\end{proof}

\begin{prop}\label{pcom}
Let $\gam\in P(X)$ be connected.
\begin{itemize}
  \item [\rm(1)] If $\gam$ has a cycle, then the cycle is unique and $\gam$ does not have any double rays or right rays
or maximal chains or maximal left rays.
  \item [\rm(2)] If $\gam$ has a double ray, then it does not have any maximal chains or maximal left rays.
  \item [\rm(3)] If $\gam$ has a right ray, but no double rays, then it has a maximal right ray
and it does not have any left rays or maximal chains.
  \item [\rm(4)] If $\gam$ has a chain, but no cycles or rays, then it has a maximal chain.
  \item [\rm(5)] If $\gam$ has a left ray, but no cycles or double rays, then it has a maximal left ray.
\end{itemize}
\end{prop}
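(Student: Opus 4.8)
The plan is to run everything through two features of the functional digraph $\Gamma(\gam)$, in which every vertex has out-degree at most $1$. First, any $\gam$-walk that revisits a vertex contains a cycle; hence if $\gam$ has no cycle then every $\gam$-walk is injective, so a forward-infinite $\gam$-walk is literally a right ray contained in $\gam$, a backward-infinite one a left ray, and a bi-infinite one a double ray (in each case the source/sink conditions of Definition~\ref{dbas} hold automatically because the vertices are distinct). Second, I would distill from Lemma~\ref{lcom}(1)--(2) the single consequence, call it $(\ast)$: \emph{if $\gam$ has a cycle, a right ray, or a double ray, then $x\gam^m\ne\mz$ for every $x\in\dom(\gam)$ and every $m\ge0$}. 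Indeed each such $x$ maps, by Lemma~\ref{lcom}, into the cycle, the tail of the right ray, or the tail of the double ray, and from any of those vertices one never leaves $\dom(\gam)$, so the forward orbit of $x$ never dies.

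For (1), if $\gam$ has a cycle $(x_0\ldots x_{k-1})$ then by Lemma~\ref{lcom}(1) every forward orbit enters this finite cycle and is eventually periodic, so $\gam$ has no infinite injective forward orbit; since a right ray or a double ray would supply one, $\gam$ has neither. Uniqueness of the cycle follows because a vertex of a second cycle lies in $\dom(\gam)$, so by Lemma~\ref{lcom}(1) its forward orbit reaches $x_0$ while staying on the second cycle; thus $x_0$ lies on both cycles, and since out-degrees are $1$ the two cycles, being the forward orbit of $x_0$, coincide. Finally $(\ast)$ forbids maximal chains and maximal left rays: the sink of either configuration lies outside $\dom(\gam)$ by Definition~\ref{dmax}, yet a vertex of $\dom(\gam)$ reaches that sink in finitely many $\gam$-steps, so a forward orbit dies, contradicting $(\ast)$. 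Statement (2) is exactly this last $(\ast)$-argument, with $(\ast)$ now supplied by the double ray.

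For (3), assume $\gam$ has a right ray $[x_0\,x_1\,x_2\ldots\ran$ and no double ray. Property $(\ast)$ (from the right ray) again rules out maximal chains, and by (1) a right ray forbids a cycle, so every $\gam$-walk is injective. To produce a \emph{maximal} right ray I would extend $[x_0\,x_1\ldots\ran$ backwards by repeatedly choosing a $\gam$-preimage of the current source; an infinite such backward chain would, together with the given forward ray, be a bi-infinite injective $\gam$-walk, i.e.\ a double ray, which is excluded. Hence the backward chain stops at a vertex outside $\ima(\gam)$, and prepending it yields a right ray whose source avoids $\ima(\gam)$, which is maximal by Definition~\ref{dmax}. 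For the ``no left rays'' clause: a left ray would give an infinite backward chain $\ldots\to y_2\to y_1$ with $y_1\in\dom(\gam)$; by $(\ast)$ the forward orbit of $y_1$ is infinite, so concatenating it with the backward chain produces a bi-infinite injective walk, hence a double ray, contradicting the hypothesis.

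For (4), with no cycles and no rays, I would extend a given chain in both directions: forward from its sink (each next vertex is forced, the walk cannot repeat since there is no cycle and cannot be infinite since there is no right ray, so it halts at a vertex outside $\dom(\gam)$) and backward from its source (symmetrically it halts at a vertex outside $\ima(\gam)$, using the absence of a left ray). The resulting finite injective path is a chain whose sink avoids $\dom(\gam)$ and whose source avoids $\ima(\gam)$, hence a maximal chain by Definition~\ref{dmax}. For (5), with no cycles and no double rays, I would extend the given left ray forward from its sink; an infinite forward orbit would combine with the backward chain into a bi-infinite injective walk, a double ray, which is excluded, so the extension terminates at a vertex outside $\dom(\gam)$, giving a maximal left ray. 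The main obstacle is the bookkeeping underlying (3) and these extension arguments: at each step one must certify that the walk obtained by following preimages or by iterating $\gam$ is genuinely one of the basic partial transformations of Definition~\ref{dbas} — which is precisely where ``no repeated vertex, because there is no cycle'' is invoked — and one must check each extended configuration against the exact domain/image conditions of maximality in Definition~\ref{dmax}.
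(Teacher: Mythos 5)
Your proof is correct and follows essentially the same route as the paper's: Lemma~\ref{lcom} is used to show that forward orbits never die in the presence of a cycle or ray (which rules out terminal vertices, hence maximal chains and maximal left rays), cycle uniqueness comes from out-degree one, and maximal rays/chains are produced by backward and forward extension that must terminate because a non-terminating extension would create a double ray or a cycle. Your condition $(\ast)$ and the ``no repeated vertex because there is no cycle'' bookkeeping are exactly the devices the paper uses, just stated more explicitly.
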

\begin{proof}
Suppose that $\gam$ has a cycle, and let $\thet$ and $\vth$ be cycles in $\gam$,
say $\thet=(x_0\,x_1\ldots\,x_{k-1})$. Let $y\in\dom(\vth)$.
By Lemma~\ref{lcom}, $y\gam^p=x_0$ for some $p\geq0$. Thus $x_0$ lies on $\vth$, so we may write
$\vth=(y_0\,y_1\ldots\,y_{m-1})$ with $y_0=x_0$. We may assume that $k\leq m$.
But then $x_i=x_0\gam^i=y_0\gam^i=y_i$ for every $i\in\{0,\ldots,k-1\}$ and $y_{k-1}\gam=x_{k-1}\gam=x_0=y_0$,
that is, $\thet=\vth$.

Suppose that $\gam$ with a cycle $(x_0\,x_1\ldots\,x_{k-1})$ also has a double ray, say $\lan\ldots\,y_{-1}\,y_0\,y_1\ldots\ran$.
By Lemma~\ref{lcom}, $y_0\gam^m=x_0$ for some $m\geq0$. But then $y_0\gam^{m+k}=(y_0\gam^m)\gam^k=x_0\gam^k=x_0=y_0$,
which is a contradiction since $y_0\gam^{m+k}=y_{m+k}\ne y_0$ (since $m\geq0$ and $k\geq1$). Thus $\gam$ does not have a double ray.
This completes the proof of (1) since a connected $\gam$ with a cycle cannot have any terminal vertices, and hence
cannot have any maximal chains or maximal left rays. Statement (2) also follows since a connected $\gam$ with a double ray
cannot have terminal vertices either.

Let $\eta=[x_0\,x_1\,x_2\ldots\ran$ be a right ray in $\al$.
If $\eta$ is not maximal, then
$x_{-1}\gam=x_0$ for some $x_{-1}\in X\sm \{x_0,x_1,\ldots\}$. (If $x_{-1}=x_i$ for some $i\geq0$, then $\gam$ would have a cycle,
which is impossible by (1).)
Thus $\eta_1=[x_{-1}\,x_0\,x_1\,x_2\ldots\ran$ is a right ray in $\al$.
If $\eta_1$ is not maximal, then
$x_{-2}\gam=x_{-1}$ for some $x_{-2}\in X\sm \{x_{-1},x_0,x_1,\ldots\}$, and so
$\eta_2=[x_{-2}\,x_{-1}\,x_0\,x_1\,x_2\ldots\ran$ is a right ray in $\al$.
Continuing this way, we must arrive at a maximal right ray in $\al$ (after finitely many steps) since otherwise
$\al$ would have a double ray.
This completes the proof of (3) since a connected $\gam$ with a right ray cannot have any terminal vertices.

To prove (4), let $\lam=[x_0\,x_1\ldots\,x_k]$ be a chain in $\al$.
If $x_0\in\ima(\al)$, then, since $\al$ has no left rays,
we can use the argument as in the proof of (3) for a right ray to extend $\lam$
to a chain $\lam'=[x_{-m}\ldots x_{-1}\,x_0\,x_1\ldots\,x_k]$ such that $x_{-m}\notin\ima(\al)$.
Similarly, since $\al$ has no right rays or cycles, we can extend $\lam'$ to a chain
$\lam''=[x_{-m}\,x_{-m+1}\ldots x_{-1}\,x_0\,x_1\ldots\,x_k\,x_{k+1}\ldots\,x_{k+p}]$
such that $x_{k+p}\notin\dom(\al)$. Then $\lam''$ is a maximal chain in $\al$. We have proved~(4).
The proof of (5) is similar.
\end{proof}

\begin{rem}\label{rtyp}
{\rm
It follows from Proposition~\ref{pcom} that as far as the types of basic transformations go,
a connected $\gam\in P(X)$ can contain one of the following.
\begin{itemize}
  \item [(1)] A single cycle and no double rays or right rays or maximal chains or maximal left rays (see Figure~\ref{f51});
  \item [(2)] A double ray but no cycles or maximal chains or maximal left rays (see Figure~\ref{f52});
  \item [(3)] A maximal right ray but no cycles or double rays or left rays or maximal chains (see Figure~\ref{f53});
  \item [(4)] A maximal left ray but no cycles or double rays or right rays (see Figure~\ref{f54} and Definition~\ref{drro});
  \item [(5)] A maximal chain but no cycles or rays (see Figure~\ref{f55} and Definition~\ref{drro}).
\end{itemize}
We note that the uniqueness
applies only to a cycle. A connected $\gam$ can have any number (finite or infinite) of (maximal) chains or (maximal) rays of any type.

}
\end{rem}

\begin{figure}[h]
\[
\xy
(50,50)*{}*\cir<30pt>{};
(50,39.5)*{\bullet}="1";
(50,60.5)*{\bullet}="2";
(39.5,50)*{\bullet}="3";
(60.5,50)*{\bullet}="3b";
(29.5,50)*{\bullet}="3c";
(25.5,50)*{\cdots};
(40,30)*{\bullet}="4";
(60,30)*{\bullet}="5";
(30,20)*{\bullet}="6";
(70,40)*{\bullet}="7";
(80,30)*{\bullet}="7b";
(70,30)*{\bullet}="8";
(70,20)*{\bullet}="9";
(70,15)*{\vdots};
"7";"3b" **\crv{} ?>* \dir{};
"3c";"3" **\crv{} ?>* \dir{};
"7b";"7" **\crv{} ?>* \dir{};
"4";"1" **\crv{} ?>* \dir{};
"5";"1" **\crv{} ?>* \dir{};
"9";"8" **\crv{} ?>* \dir{};
"8";"7"*{\bullet} **\crv{} ?>* \dir{};
(30,20)*{\bullet};"4" **\crv{} ?>* \dir{};
\endxy
\]
\caption{A connected partial transformation with a cycle.}\label{f51}
\end{figure}

\begin{figure}[h]
\[
\xy
(80,75)*{\vdots};
(80,27)*{\vdots};
(60,30)*{\bullet}="3";
(60,25)*{\vdots};
(50,30)*{\bullet}="4";
(60,40)*{\bullet}="5";
(70,50)*{\bullet}="6";
(80,30)*{\bullet}="7a";
(80,40)*{\bullet}="7";
(80,50)*{\bullet}="8";
(80,60)*{\bullet}="9";
(80,70)*{\bullet}="9a";
(100,30)*{\bullet}="13";
(100,20)*{\bullet}="13b";
(90,45)*{\bullet}="14";
(100,40)*{\bullet}="15";
(110,30)*{\bullet}="16";
"7a";"7" **\crv{} ?>* \dir{-};
"9a";"9" **\crv{} ?>* \dir{-};
"3";"5" **\crv{} ?>* \dir{-};
"4";"5" **\crv{} ?>* \dir{-};
"5";"6" **\crv{} ?>* \dir{-};
"6";"9" **\crv{} ?>* \dir{-};
"7";"8" **\crv{} ?>* \dir{-};
"8";"9" **\crv{} ?>* \dir{-};
"13";"15" **\crv{} ?>* \dir{-};
"14";"15" **\crv{} ?>* \dir{-};
"15";"16" **\crv{} ?>* \dir{-};
"13b";"13" **\crv{} ?>* \dir{-};
"14";"8" **\crv{} ?>* \dir{-};
\endxy
\]
\caption{A connected partial transformation with a double ray.}\label{f52}
\end{figure}

\begin{figure}[h]
\[
\xy
(80,85)*{\vdots};
(60,30)*{\bullet}="3";
(50,30)*{\bullet}="4";
(60,40)*{\bullet}="5";
(70,50)*{\bullet}="6";
(80,40)*{\bullet}="7";
(80,30)*{\bullet}="7b";
(80,20)*{\bullet}="7c";
(70,20)*{\bullet}="7cc";
(80,10)*{\bullet}="7d";
(80,50)*{\bullet}="8";
(80,60)*{\bullet}="9";
(80,70)*{\bullet}="9a";
(80,80)*{\bullet}="9b";
(100,30)*{\bullet}="13";
(100,20)*{\bullet}="13b";
(90,45)*{\bullet}="14";
(100,40)*{\bullet}="15";
(110,30)*{\bullet}="16";
"9";"9a" **\crv{} ?>* \dir{-};
"9a";"9b" **\crv{} ?>* \dir{-};
"3";"5" **\crv{} ?>* \dir{-};
"4";"5" **\crv{} ?>* \dir{-};
"5";"6" **\crv{} ?>* \dir{-};
"7d";"7c" **\crv{} ?>* \dir{-};
"7c";"7b" **\crv{} ?>* \dir{-};
"7cc";"7b" **\crv{} ?>* \dir{-};
"7b";"7" **\crv{} ?>* \dir{-};
"6";"9" **\crv{} ?>* \dir{-};
"7";"8" **\crv{} ?>* \dir{-};
"8";"9" **\crv{} ?>* \dir{-};
"13";"15" **\crv{} ?>* \dir{-};
"14";"15" **\crv{} ?>* \dir{-};
"15";"16" **\crv{} ?>* \dir{-};
"13b";"13" **\crv{} ?>* \dir{-};
"14";"8" **\crv{} ?>* \dir{-};
\endxy
\]
\caption{A connected partial transformation of type $\rro$.}\label{f53}
\end{figure}

\begin{figure}[h]
\[
\xy
(80,17)*{\vdots};
(60,30)*{\bullet}="3";
(60,27)*{\vdots};
(50,30)*{\bullet}="4";
(60,40)*{\bullet}="5";
(70,50)*{\bullet}="6";
(80,20)*{\bullet}="7b";
(80,30)*{\bullet}="7a";
(80,40)*{\bullet}="7";
(80,50)*{\bullet}="8";
(80,60)*{\bullet}="9";
(100,30)*{\bullet}="13";
(100,20)*{\bullet}="13b";
(90,45)*{\bullet}="14";
(100,40)*{\bullet}="15";
(110,30)*{\bullet}="16";
"7b";"7a" **\crv{} ?>* \dir{-};
"7";"7a" **\crv{} ?>* \dir{-};
"3";"5" **\crv{} ?>* \dir{-};
"4";"5" **\crv{} ?>* \dir{-};
"5";"6" **\crv{} ?>* \dir{-};
"6";"9" **\crv{} ?>* \dir{-};
"7";"8" **\crv{} ?>* \dir{-};
"8";"9" **\crv{} ?>* \dir{-};
"13";"15" **\crv{} ?>* \dir{-};
"14";"15" **\crv{} ?>* \dir{-};
"15";"16" **\crv{} ?>* \dir{-};
"13b";"13" **\crv{} ?>* \dir{-};
"14";"8" **\crv{} ?>* \dir{-};
\endxy
\]
\caption{A connected partial transformation with a maximal left ray.}\label{f54}
\end{figure}

\begin{figure}[h]
\[
\xy
(60,30)*{\bullet}="3";
(50,30)*{\bullet}="4";
(60,40)*{\bullet}="5";
(70,50)*{\bullet}="6";
(80,40)*{\bullet}="7";
(80,30)*{\bullet}="7b";
(80,20)*{\bullet}="7c";
(70,20)*{\bullet}="7cc";
(80,10)*{\bullet}="7d";
(80,50)*{\bullet}="8";
(80,60)*{\bullet}="9";
(100,30)*{\bullet}="13";
(100,20)*{\bullet}="13b";
(90,45)*{\bullet}="14";
(100,40)*{\bullet}="15";
(110,30)*{\bullet}="16";
"3";"5" **\crv{} ?>* \dir{-};
"4";"5" **\crv{} ?>* \dir{-};
"5";"6" **\crv{} ?>* \dir{-};
"7d";"7c" **\crv{} ?>* \dir{-};
"7c";"7b" **\crv{} ?>* \dir{-};
"7cc";"7b" **\crv{} ?>* \dir{-};
"7b";"7" **\crv{} ?>* \dir{-};
"6";"9" **\crv{} ?>* \dir{-};
"7";"8" **\crv{} ?>* \dir{-};
"8";"9" **\crv{} ?>* \dir{-};
"13";"15" **\crv{} ?>* \dir{-};
"14";"15" **\crv{} ?>* \dir{-};
"15";"16" **\crv{} ?>* \dir{-};
"13b";"13" **\crv{} ?>* \dir{-};
"14";"8" **\crv{} ?>* \dir{-};
\endxy
\]
\caption{A connected partial transformation of type $\cho$.}\label{f55}
\end{figure}

For our purposes, it will not be necessary to distinguish connected partial transformations that have double rays only
or left rays only.
(In other words, if a connected $\gam\in P(X)$ has a double ray, then it will not matter whether it has a maximal right ray as well;
similarly, if it has a maximal left ray, then it will not matter whether it has a maximal chain as well.)
However, we will need to distinguish connected transformations that have right rays only,
and connected transformations that have chains only.

\begin{defi}\label{drro}
{\rm
Let $\gam\in P(X)$ be connected. If $\gam$ satisfies (3) of Remark~\ref{rtyp}, we
will say that $\gam$ is of (or has) \emph{type} $\rro$ (``right rays only'').
If $\gam$ satisfies (5) of Remark~\ref{rtyp}, we will say that $\gam$ is of type $\cho$ (``chains only'').
}
\end{defi}

\begin{lemma}\label{lcho}
Let $\gam\in P(X)$ be connected such that $\gam$ contains a maximal left ray or it is of type $\cho$.
Then $\gam$ contains a unique terminal vertex.
\end{lemma}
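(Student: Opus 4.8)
The plan is to locate one terminal vertex from the bracket notation of the distinguished basic transformation, and then to use Lemma~\ref{lcom}(3) to rule out any other terminal vertex. The whole argument reduces to the observation, already packaged in Lemma~\ref{lcom}(3), that every vertex of $\spa(\gam)$ flows under iteration of $\gam$ into a single sink.

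First I would establish existence. If $\gam$ contains a maximal left ray $\lan\ldots\,x_2\,x_1\,x_0]$, the right-bracket convention (item (iii) after Definition~\ref{dbas}) gives $x_0\notin\dom$ of the ray; since the ray is maximal in $\gam$ and $x_0$ lies in its span, Definition~\ref{dmax} upgrades this to $x_0\notin\dom(\gam)$. As $x_0\in\spa(\gam)$ is non-isolated, $x_0$ is then a terminal vertex of $\gam$. If instead $\gam$ is of type $\cho$, then by Proposition~\ref{pcom}(4) it possesses a maximal chain $[x_k\ldots\,x_1\,x_0]$, and the identical bracket-plus-maximality argument yields a terminal vertex $x_0$. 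In both cases Lemma~\ref{lcom}(3) applies and states that for every $x\in\spa(\gam)$ there is $m\geq0$ with $x\gam^m=x_0$.

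Next I would prove uniqueness. Let $z$ be an arbitrary terminal vertex of $\gam$, so $z\in\spa(\gam)$ and $z\notin\dom(\gam)$. Lemma~\ref{lcom}(3) furnishes $m\geq0$ with $z\gam^m=x_0$. The crucial point is that $m=0$: were $m\geq1$, the value $z\gam^m=(z\gam)\gam^{m-1}\neq\mz$ would force $z\in\dom(\gam)$, contradicting that $z$ is terminal. Hence $m=0$ and $z=z\gam^0=x_0$, so $x_0$ is the only terminal vertex.

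I do not expect a serious obstacle here, since the content is carried entirely by Lemma~\ref{lcom}(3). The one step deserving care is the inference $m=0$, which is exactly where the defining property of a terminal vertex --- that it has no outgoing arc, i.e.\ lies outside $\dom(\gam)$ --- is used to block any positive-length path returning to $x_0$.
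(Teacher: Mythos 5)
Your proof is correct and follows essentially the same route as the paper: the paper also gets existence directly from the maximal left ray or maximal chain, and for uniqueness applies connectedness to two terminal vertices ($x\gam^k=y\gam^m\ne\mz$) and observes that terminality forces $k=m=0$, which is exactly the argument you obtain by unrolling your appeal to Lemma~\ref{lcom}(3). Your version merely packages half of that computation inside the already-proved lemma and spells out the existence step in more detail.
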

\begin{proof}
Since $\gam$ contains a maximal left ray or a maximal chain, it contains a terminal vertex. Suppose $x$ and $y$
are terminal vertices in $\gam$. Since $\gam$ is connected, $x\gam^k=y\gam^m\ne\mz$ for some $k,m\geq0$.
But since $x$ and $y$ are terminal, this is only possible when $k=m=0$. Thus $x=y$.
\end{proof}

\begin{defi}\label{droo}
{\rm
Let $\gam\in P(X)$ be connected such that $\gam$ has a maximal left ray or is of type $\cho$.
The unique terminal vertex of $\gam$ established
by Lemma~\ref{lcho} will be called the \emph{root} of~$\gam$.
}
\end{defi}

For integers $a$ and $b$, we write $a\,|\,b$ if $a$ divides $b$, that is, if $b=ak$ for some integer $k$.
For integers $a$ and $n$ with $n\geq1$, we denote by $\mo(a,n)$ the unique integer $r$ in $\{0,1,\ldots,n-1\}$ such that
$a\equiv r\pmod n$. We note that
\begin{equation}\label{emod}
\mo(a+1,n)=\left\{\begin{array}{ll}
\mo(a,n)+1&\mbox{if $\mo(a,n)\ne n-1$},\\
0&\mbox{if $\mo(a,n)=n-1$}.
\end{array}\right.
\end{equation}

\begin{prop}\label{pcyc}
Let $\gam,\del\in P(X)$ be connected such that $\gam$ has a cycle $(x_0\,x_1\ldots\,x_{k-1})$. Then
$\Gamma(\gam)$ is rp-homomorphic to $\Gamma(\del)$ if and only if $\del$
has a cycle $(y_0\,y_1\ldots\,y_{m-1})$ such that $m\,|\, k$.
\end{prop}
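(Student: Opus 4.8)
The plan is to treat the two directions separately, the key structural fact in both being that $\Gamma(\del)$ is the digraph of a partial function, so every vertex has out-degree at most one.

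For the \emph{only if} direction, suppose $\phi$ is an rp-homomorphism from $\Gamma(\gam)$ to $\Gamma(\del)$. The arcs of the cycle of $\gam$ are $x_i\ra x_{\mo(i+1,k)}$, so Definition~\ref{drph}(a) forces each $x_i\in\dom(\phi)$ and, setting $y_i=x_i\phi$, gives $y_i\del=y_{\mo(i+1,k)}$; in particular $y_0\del^k=y_0$ with every $y_i\in X$. Thus $y_0$ is a periodic point of $\del$, and since $\del$ is a function its forward orbit from $y_0$ closes up into a cycle whose least period $m$ divides $k$ (by the usual division-with-remainder argument applied to $y_0\del^k=y_0$). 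This orbit is a cycle $(y_0\,y_0\del\ldots\,y_0\del^{m-1})$ of $\del$ with $m\,|\,k$, as required.

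For the \emph{if} direction, assume $\del$ has a cycle $(y_0\,y_1\ldots\,y_{m-1})$ with $m\,|\,k$; I would construct an explicit rp-homomorphism $\phi$ defined on $\spa(\gam)$ and undefined elsewhere (the omitted vertices being isolated, cf.\ Remark~\ref{riso}). Because $\gam$ is connected with a cycle, Proposition~\ref{pcom}(1) gives that $\gam$ has no terminal vertices, so $\spa(\gam)=\dom(\gam)$, and Lemma~\ref{lcom}(1) provides, for each $x\in\spa(\gam)$, a least integer $p(x)\geq0$ with $x\gam^{p(x)}=x_0$. The idea is to attach to $x$ the phase $\nu(x)=\mo(-p(x),m)\in\{0,\ldots,m-1\}$ and set $x\phi=y_{\nu(x)}$, which maps all of $\spa(\gam)$ onto the $\del$-cycle.

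It remains to verify that $\phi$ is an rp-homomorphism, and this is where the divisibility enters. Condition (b) of Definition~\ref{drph} is vacuous, since no vertex of $\dom(\phi)=\spa(\gam)$ is terminal. For condition (a), given an arc $x\ra x\gam$ one has $p(x\gam)=p(x)-1$ whenever $p(x)\geq1$, so $\nu(x\gam)=\mo(\nu(x)+1,m)$, and then (\ref{emod}) yields $y_{\nu(x)}\del=y_{\mo(\nu(x)+1,m)}=y_{\nu(x\gam)}$, i.e.\ $x\phi\ra(x\gam)\phi$ in $\Gamma(\del)$. The one delicate point, which I expect to be the crux, is the consistency of the labelling around the cycle itself: on the cycle $p(x_i)=k-i$, so $\nu(x_i)=\mo(i-k,m)$, and this equals the cyclically correct value $\mo(i,m)$ precisely because $\mo(k,m)=0$, that is, because $m\,|\,k$. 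Equivalently, the wrap-around arc $x_{k-1}\ra x_0$ requires $\nu(x_{k-1})+1\equiv\nu(x_0)\pmod m$, which holds if and only if $m\,|\,k$; the tree parts feeding into the cycle then cause no further trouble, their phases being forced by pulling $\nu$ back along the deterministic action of $\gam$.
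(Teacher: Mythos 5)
Your proposal is correct and follows essentially the same route as the paper's proof: the forward direction reads off $y_0\del^k=y_0$ from the image of the cycle and extracts the least period $m\,|\,k$ by division with remainder, and the converse builds exactly the paper's map $x\mapsto y_{\mo(-p_x,m)}$ on $\spa(\gam)=\dom(\gam)$, with divisibility needed precisely at the phase wrap-around on the cycle (your exceptional arc $x_{k-1}\ra x_0$ versus the paper's $x_0\ra x_1$ is the same discontinuity viewed from the other side). No gaps.
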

\begin{proof}
Suppose there is an rp-homomorphism $\phi$ from $\Gamma(\gam)$ to $\Gamma(\del)$.
Let $y_i=x_i\phi$ for $i=0,1,\ldots,k-1$. Then $y_0\ard y_1\ard\cdots\ard y_{k-1}\ard y_0$,
and so $y_0\del^k=y_0$. Let $m$ be the smallest integer in $\{1,2,\ldots,k\}$ such that $y_0\del^m=y_0$.
Then $(y_0\,y_1\ldots\,y_{m-1})$ is a cycle in $\del$. By the Division Algorithm, $k=mq+r$ for some
$q,r\in\mathbb N$ with $0\leq r<m$. Since $y_0\del^m=y_0$, we have $y_0\del^{mq}=y_0$, and so
$y_0=y_0\del^k=(y_0\del^{mq})\del^r=y_0\del^r$. Thus $r=0$ by the definition of $m$, and so $k=mq$,
that is, $m\,|\, k$.

Conversely suppose that $\del$ has a desired cycle.
We will define an rp-homomorphism $\phi$ from $\Gamma(\gam)$ to $\Gamma(\del)$
such that $\dom(\phi)=\dom(\gam)$ and $\ima(\phi)=\{y_0,y_1,\ldots,y_{m-1}\}$.
(Note that $\dom(\gam)=\spa(\gam)$ since $\gam$ has a cycle.)
For $x\in\dom(\gam)$, let
$p_x$ be the smallest nonnegative integer such that $x\gam^{p_x}=x_0$ (such $p_x$ exists by Lemma~\ref{lcom}),
and let $q_x=\mo(-p_x,m)$. Define $\phi$ on $\dom(\gam)$ by $x\phi=y_{q_x}$. Suppose $x\arga z$. We consider
two possible cases.
\vskip 1mm
\noindent\textbf{Case 1.} $x=x_0$.
\vskip 1mm
Then $p_x=0$, $z=x\gam=x_0\gam=x_1$, and $p_z=k-1$. Thus $q_x=\mo(0,m)=0$ and $q_z=\mo(-k+1,m)=1$
(since $m\,|\, k$, and so $-k\equiv0\pmod m$). Hence $x\phi=y_0\ard y_1=z\phi$.
\vskip 1mm
\noindent\textbf{Case 2.} $x\ne x_0$.
\vskip 1mm
Then, since $x\arga z$, we have $p_z=p_x-1$, and so
\begin{equation}\label{epcyc1}
q_z=\mo(-p_z,m)=\mo(-p_x+1,m).
\end{equation}

Suppose $q_x=\mo(-p_x,m)\ne m-1$. Then, by (\ref{emod}) and (\ref{epcyc1}), $q_z=\mo(-p_x+1,m)=\mo(-p_x,m)+1=q_x+1$, and so
$x\phi=y_{q_x}\ard y_{q_x+1}=y_{q_z}=z\phi$.

Suppose $q_x=\mo(-p_x,m)=m-1$. Then $-p_x\equiv-1\pmod m$, and so $p_x\equiv1\pmod m$. Thus $p_x=tm+1$
for some integer $t$, and so $p_z=p_x-1=tm$. Hence $q_z=\mo(-p_z,m)=\mo(-tm,m)=0$, and so
$x\phi=y_{q_x}=y_{m-1}\ard y_0=y_{q_z}=z\phi$.

Thus, in both cases, $x\phi\ard x\phi$, and so $\phi$ is an rp-homomorphism. (Condition (b) of Definition~\ref{drph}
is satisfied since $\Gamma(\gam)$ does not have any terminal vertices.)
\end{proof}

\begin{lemma}\label{lcyc}
Let $\gam,\del\in P(X)$ be connected such that $\del$ has a cycle $(y_0\,y_1\ldots\,y_{m-1})$.
Suppose $\gam$ has a double ray or $\gam$ is of type $\rro$. Then
$\Gamma(\gam)$ is rp-homomorphic to $\Gamma(\del)$.
\end{lemma}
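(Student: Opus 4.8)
The plan is to exhibit a single explicit rp-homomorphism $\phi$ from $\Gamma(\gam)$ to $\Gamma(\del)$ that collapses all of $\gam$ onto the cycle $(y_0\,y_1\ldots\,y_{m-1})$ of $\del$, handling the two hypotheses uniformly. First I would record the two features shared by both cases. By Remark~\ref{rtyp}, such a $\gam$ is of type (2) or (3), and in either case $\gam$ has no terminal vertices (as observed in the proof of Proposition~\ref{pcom}); hence condition (b) of Definition~\ref{drph} will hold vacuously, and I only need to arrange condition (a). Since $\gam$ has no terminal vertices in its span, $\spa(\gam)=\dom(\gam)$, and by Lemma~\ref{lcom}(2) every $x\in\dom(\gam)$ satisfies $x\gam^p=x_i$ for some integers $p,i\geq0$, where $\lan\ldots\,x_{-1}\,x_0\,x_1\ldots\ran$ (resp.\ $[x_0\,x_1\,x_2\ldots\ran$) is the double ray (resp.\ maximal right ray) of $\gam$.

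Next I would define a \emph{level} function $\ell\colon\dom(\gam)\to\mathbb Z$ recording position relative to the ray. Put $\ell(x_i)=i$ on the ray, and for an arbitrary $x\in\dom(\gam)$ choose $p,i\geq0$ with $x\gam^p=x_i$ and set $\ell(x)=i-p$. The technical heart of the proof is that $\ell$ is well defined, independently of the chosen representation. If $x\gam^p=x_i$ and $x\gam^{p'}=x_{i'}$ with $p\leq p'$, then $x_{i'}=(x\gam^p)\gam^{p'-p}=x_i\gam^{p'-p}=x_{i+(p'-p)}$, so $i'=i+(p'-p)$ and therefore $i-p=i'-p'$; here I use that $x_i\gam=x_{i+1}$ holds for every relevant index (for the right ray this is legitimate because $i\geq0$ and $p'-p\geq0$ keep the index nonnegative). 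The same computation, applied to $z=x\gam$, gives the key compatibility with arrows: if $x\arga z$, then $\ell(z)=\ell(x)+1$, by a short case analysis according to whether $p=0$ (so $x=x_i$ lies on the ray and $z=x_{i+1}$) or $p\geq1$ (so $z\gam^{p-1}=x_i$).

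Finally I would define $\phi$ on $\dom(\gam)$ by $x\phi=y_{\mo(\ell(x),m)}$ and verify condition (a). Suppose $x\arga z$. Then $\ell(z)=\ell(x)+1$, and using the cycle relation $y_j\del=y_{\mo(j+1,m)}$ together with the identity $\mo(\mo(a,m)+1,m)=\mo(a+1,m)$ (an immediate consequence of (\ref{emod})), one computes
\[
(x\phi)\del=y_{\mo(\ell(x),m)}\del=y_{\mo(\ell(x)+1,m)}=y_{\mo(\ell(z),m)}=z\phi,
\]
so $x\phi\ard z\phi$, which is (a). As condition (b) is vacuous, $\phi$ is an rp-homomorphism from $\Gamma(\gam)$ to $\Gamma(\del)$, proving $\Gamma(\gam)$ is rp-homomorphic to $\Gamma(\del)$. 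I expect the main obstacle to be the well-definedness of $\ell$ and its interaction with the two ray types; once the single identity $i-p=i'-p'$ is secured, the rest is a routine modular-arithmetic check essentially mirroring the cycle-to-cycle construction of Proposition~\ref{pcyc}.
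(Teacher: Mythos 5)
Your proposal is correct and follows essentially the same route as the paper: the map you construct, $x\phi=y_{\mo(\ell(x),m)}$ with $\ell(x)=i-p$ for $x\gam^p=x_i$, is exactly the paper's $x\phi=y_{q_x}$ with $q_x=\mo(i-p_x,m)$, the only cosmetic difference being that the paper fixes the minimal $p_x$ (making well-definedness automatic) where you allow any representation and verify that $i-p$ is independent of it. Your observation that the two hypotheses (double ray, type $\rro$) can be treated uniformly, and that condition (b) of Definition~\ref{drph} is vacuous for lack of terminal vertices, matches the paper's argument.
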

\begin{proof}
Suppose $\gam$ has a double ray $\mu=\lan\ldots\,x_{-1}\,x_0\,x_1\ldots\ran$.
We will define an rp-homomorphism $\phi$ from $\Gamma(\gam)$ to $\Gamma(\del)$
such that $\dom(\phi)=\dom(\gam)$ and $\ima(\phi)=\{y_0,y_1,\ldots,y_{m-1}\}$. For $x\in\dom(\gam)$, let
$p_x$ be the smallest nonnegative integer such that $x\gam^{p_x}=x_i$ for some $i$ (such $p_x$ exists by Lemma~\ref{lcom}),
and let $q_x=\mo(i-p_x,m)$. Define $\phi$ on $\dom(\gam)$ by $x\phi=y_{q_x}$. Suppose $x\arga z$. We consider
two possible cases.
\vskip 1mm
\noindent\textbf{Case 1.} $x=x_i$ for some $i\in\mathbb Z$.
\vskip 1mm
Then $p_x=0$, $z=x\gam=x_i\gam=x_{i+1}$, and $p_z=0$. Thus $q_x=\mo(i,m)$ and $q_z=\mo(i+1,m)$.
If $q_x\ne m-1$, then $q_z=q_x+1$, and so $x\phi=y_{q_x}\ard y_{q_x+1}=y_{q_z}=z\phi$.
if $q_x=m-1$, then $q_z=0$, and so $x\phi=y_{q_x}=y_{m-1}\ard y_0=y_{q_z}=z\phi$.
\vskip 1mm
\noindent\textbf{Case 2.} $x\ne x_i$ for every $i\in\mathbb Z$.
\vskip 1mm
Then, since $x\arga z$, we have $p_z=p_x-1$ with $x\gam^{p_x}=z\gam^{p_z}=i$, and so
\begin{equation}\label{elcyc1}
q_z=\mo(i-p_z,m)=\mo(i-p_x+1,m).
\end{equation}
If $q_x\ne m-1$, then, by (\ref{emod}) and (\ref{elcyc1}), $q_z=\mo(i-p_x+1,m)=\mo(i-p_x,m)+1=q_x+1$, and so
$x\phi=y_{q_x}\ard y_{q_x+1}=y_{q_z}=z\phi$.
If $q_x=m-1$, then $q_z=0$, and so again $x\phi\ard z\phi$.

Hence, since $\Gamma(\gam)$ has no terminal vertices, $\phi$ is an rp-homomorphism.
The proof in the case when $\gam$ has type $\rro$ is similar.
\end{proof}

\begin{lemma}\label{ldch}
Let $\gam,\del\in P(X)$ be connected. Suppose that $\del$ has a double ray
and $\gam$ either has a double ray or has type $\rro$.
Then $\Gamma(\gam)$ is rp-homomorphic to $\Gamma(\del)$.
\end{lemma}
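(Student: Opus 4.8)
The plan is to construct an explicit rp-homomorphism $\phi$ from $\Gamma(\gam)$ to $\Gamma(\del)$, mimicking the coordinate-based constructions used in Proposition~\ref{pcyc} and Lemma~\ref{lcyc}, but now mapping into a double ray rather than a cycle. Fix a double ray $\nu=\lan\ldots\,y_{-1}\,y_0\,y_1\ldots\ran$ in $\del$. The target is a bi-infinite path with no terminal vertices, so condition (b) of Definition~\ref{drph} will be automatic (as in the earlier lemmas, $\Gamma(\gam)$ has no terminal vertices in either of the two cases), and the entire task reduces to verifying condition (a): defining $\phi$ on all of $\dom(\gam)=\spa(\gam)$ so that every arc $x\arga z$ is sent to an arc $x\phi\ard z\phi$ in the double ray.

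First I would treat the case where $\gam$ has a double ray $\mu=\lan\ldots\,x_{-1}\,x_0\,x_1\ldots\ran$. For each $x\in\dom(\gam)$, by Lemma~\ref{lcom}(2) applied to the double ray, there is a smallest nonnegative integer $p_x$ with $x\gam^{p_x}=x_i$ for some index $i$; I would set $\phi$ to send $x$ to $y_{i-p_x}$. The key point is that this index $i-p_x$ is well defined, which I would check using that distinct points of a double ray have distinct "depths" and that the path structure forces consistency (unlike the cyclic case, there is no need to reduce modulo anything, since $\mathbb Z$ indexes the whole double ray). Then for an arc $x\arga z$ I would split into the case $x=x_i$ lying on the ray (so $z=x_{i+1}$, and the index increases by one) and the case $x\notin\mu$ (so $p_z=p_x-1$ and the index is again $i-p_x+1$); in both cases the target index increases by exactly one, giving the arc $y_{i-p_x}\ard y_{i-p_x+1}$ in $\Gamma(\del)$, exactly as in Lemma~\ref{lcyc} but without the modular bookkeeping. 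The case where $\gam$ has type $\rro$ is entirely parallel: I would pick a maximal right ray $[x_0\,x_1\,x_2\ldots\ran$, define $p_x$ as the least power carrying $x$ into that ray, and map by the same rule; Lemma~\ref{lcom}(2) again guarantees $p_x$ exists.

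The main obstacle I anticipate is verifying that the assignment $x\mapsto i-p_x$ is genuinely single-valued, i.e.\ that the pair $(i,p_x)$ is determined up to the quantity $i-p_x$. In the cyclic case of Lemma~\ref{lcyc} the modulus $m$ absorbs any ambiguity automatically, but on a double ray I must argue that if $x\gam^{p}=x_i$ and $x\gam^{p'}=x_j$ with $p,p'$ both minimal-or-not, then $i-p=j-p'$. This follows because $\gam$ restricted to the relevant walk agrees with shifting along the double ray, so $x_i\gam^{p'-p}=x_j$ forces $j=i+(p'-p)$; I would phrase this cleanly by noting that once $x$ reaches the ray it never leaves it and advances by one index per application of $\gam$, and that $\gam$ has no cycles (guaranteed by Proposition~\ref{pcom}(2) in the double-ray case and Proposition~\ref{pcom}(3) in the $\rro$ case), so the entry point and entry time are unambiguous. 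Having secured well-definedness, the arc-preservation computation is the same routine split shown above, and the absence of terminal vertices in $\Gamma(\gam)$ finishes condition (b), completing the proof.
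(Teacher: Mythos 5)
Your construction is exactly the paper's: fix a double ray $\lan\ldots\,y_{-1}\,y_0\,y_1\ldots\ran$ in $\del$, send $x$ to $y_{i-p_x}$ where $p_x$ is the least nonnegative integer with $x\gam^{p_x}$ on the chosen ray of $\gam$, and verify arc-preservation by the same two-case split, with condition (b) vacuous since $\Gamma(\gam)$ has no terminal vertices. The extra attention you give to well-definedness of $i-p_x$ is harmless (minimality of $p_x$ plus distinctness of the $x_i$ already pins down $i$), so the proposal is correct and essentially identical to the paper's proof.
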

\begin{proof}
Suppose $\gam$ has a double ray, say
$\lan\ldots\,x_{-1}\,x_0\,x_1\ldots\ran$, and let
$\lan\ldots\,y_{-1}\,y_0\,y_1\ldots\ran$ be a double ray in $\del$.
We will define an rp-homomorphism $\phi$ from $\Gamma(\gam)$ to $\Gamma(\del)$
such that $\dom(\phi)=\dom(\gam)$ and $\ima(\phi)=\{\ldots,y_{-1},y_0,y_1,\ldots\}$.
(Note that $\dom(\gam)=\spa(\gam)$ since $\gam$ has a double ray or it is of type $\rro$.)
For $x\in\dom(\gam)$, let
$p_x$ be the smallest nonnegative integer such that $x\gam^{p_x}=x_i$ for some integer $i$.
Define $\phi$ on $\dom(\gam)$ by $x\phi=y_{i-p_x}$ where $x\gam^{p_x}=x_i$. Suppose $x\arga z$. We consider
two possible cases.
\vskip 1mm
\noindent\textbf{Case 1.} $x=x_i$ for some integer $i$.
\vskip 1mm
Then $p_x=0$, $z=x\gam=x_i\gam=x_{i+1}$, and $p_z=0$. Thus
\[
x\phi=y_{i-p_x}=y_i\ard y_{i+1}=y_{i+1-p_z}=z\phi.
\]
\noindent\textbf{Case 2.} $x\ne x_i$ for every integer $i$.
\vskip 1mm
Then, since $x\arga z$, we have $p_z=p_x-1$ and $x\gam^{p_x}=z\gam^{p_z}=x_i$ for some $i$. Thus
\[
x\phi=y_{i-p_x}\ard y_{i-p_x+1}=y_{i-p_z}=z\phi.
\]

Thus, in both cases, $x\phi\ard z\phi$, and so $\phi$ is an rp-homomorphism since $\Gamma(\gam)$ does not have any terminal vertices.
The proof in the case when $\gam$ has type $\rro$ is similar.
\end{proof}

\begin{lemma}\label{llra}
Let $\gam,\del\in P(X)$ be connected. Suppose that $\del$ has a maximal left ray
and $\gam$ either has a maximal left ray or is of type $\cho$.
Then $\Gamma(\gam)$ is rp-homomorphic to $\Gamma(\del)$.
\end{lemma}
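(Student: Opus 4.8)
The plan is to construct an explicit rp-homomorphism $\phi$ from $\Gamma(\gam)$ to $\Gamma(\del)$ by sending each vertex of $\gam$ to the vertex of a fixed maximal left ray of $\del$ that lies at the same distance from the root. First I would record the structural facts I need. Whether $\gam$ has a maximal left ray or is of type $\cho$, Lemma~\ref{lcho} gives $\gam$ a unique terminal vertex, which I call $r$ (the root of $\gam$ in the sense of Definition~\ref{droo}); likewise $\del$, having a maximal left ray, has a unique terminal vertex. Fix a maximal left ray $\lan\ldots\,y_2\,y_1\,y_0]$ in $\del$, so that $y_{i+1}\ard y_i$ for every $i\geq0$ and $y_0$ is the root of $\del$. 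By Lemma~\ref{lcom}(3), for every $x\in\spa(\gam)$ there is a nonnegative integer $m$ with $x\gam^m=r$; let $p_x$ be the smallest such integer.

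Next I would define $\phi$ on $\spa(\gam)$ by $x\phi=y_{p_x}$. This is legitimate because the (infinite) left ray supplies a vertex $y_i$ for every $i\geq0$, and taking $\dom(\phi)=\spa(\gam)$ meets the requirement of Lemma~\ref{lspa}. To check condition~(a) of Definition~\ref{drph}, suppose $x\arga z$. Then $x\in\dom(\gam)$, so $x\ne r$ and hence $p_x\geq1$; moreover $z=x\gam$ satisfies $z\gam^{p_x-1}=x\gam^{p_x}=r$, giving $p_z\leq p_x-1$, and the minimality of $p_x$ rules out $p_z<p_x-1$ (else $x\gam^{p_z+1}=r$ with $p_z+1<p_x$). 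Thus $p_z=p_x-1$, and so $x\phi=y_{p_x}\ard y_{p_x-1}=y_{p_z}=z\phi$, as needed.

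The step that genuinely distinguishes this lemma from the double-ray and cycle cases treated above is condition~(b), since here $\Gamma(\gam)$ does possess a terminal vertex and so (b) is not vacuous. The resolution is that the only terminal vertex of $\gam$ is $r$, with $p_r=0$, whence $r\phi=y_0$; and $y_0$ is exactly the terminal root of $\del$, so (b) holds. I expect the only delicate bookkeeping to be the minimality argument yielding $p_z=p_x-1$, together with the observation that sending $r$ to $y_0$ is both forced and compatible with terminality; the uniform treatment of the ``maximal left ray'' and ``type $\cho$'' cases follows because Lemma~\ref{lcho} and Lemma~\ref{lcom}(3) apply identically to both.
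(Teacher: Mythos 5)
Your proposal is correct and follows essentially the same route as the paper: both fix a maximal left ray $\lan\ldots\,y_2\,y_1\,y_0]$ in $\del$, define $x\phi=y_{p_x}$ where $p_x$ is the least $m\geq0$ with $x\gam^m$ equal to the root of $\gam$ (using Lemma~\ref{lcom}(3)), verify $p_z=p_x-1$ when $x\arga z$, and check terminality via the unique root. Your added detail on the minimality argument for $p_z=p_x-1$ is a harmless elaboration of a step the paper states without proof.
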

\begin{proof}
Let $\lan\ldots\,y_2\,y_1\,y_0]$ be a maximal left ray in $\del$. Note that $y_0$ is the root of $\del$.
Let $x_0$ be the root of $\gam$.
We will define an rp-homomorphism $\phi$ from $\Gamma(\gam)$ to $\Gamma(\del)$
such that $\dom(\phi)=\spa(\gam)$ and $\ima(\phi)\subseteq\{\ldots,y_2,y_1,y_0\}$. For $x\in\spa(\gam)$, let
$p_x$ be the smallest nonnegative integer such that $x\gam^{p_x}=x_0$ (such $p_x$ exists by Lemma~\ref{lcom}).
Define $\phi$ on $\spa(\gam)$ by $x\phi=y_{p_x}$. If $x\arga z$, then $p_z=p_x-1$, and so
$x\phi=y_{p_x}\ard y_{p_x-1}=y_{p_z}=z\phi$.
Further, the only terminal vertex in $\Gamma(\gam)$ is $x_0$ and $x_0\phi=y_0$ (since $p_{x_0}=0$),
which is a terminal vertex in $\Gamma(\del)$. Hence $\phi$ is an rp-homomorphism.
\end{proof}

\begin{lemma}\label{lrronotlc}
Let $\gam,\del\in P(X)$ be connected such that $\gam$ is of type $\rro$. Suppose $\Gamma(\gam)$
is rp-homomorphic to $\Gamma(\del)$. Then $\del$ cannot have a maximal left ray or be of type $\cho$.
\end{lemma}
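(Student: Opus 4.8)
The plan is to argue by contradiction, exploiting the mismatch between the two hypotheses: type $\rro$ forces $\Gamma(\gam)$ to have \emph{no} terminal vertices, hence infinite forward orbits, whereas a maximal left ray or type $\cho$ forces $\del$ to have a terminal vertex into which every forward orbit must eventually fall.

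First I would assume, for contradiction, that $\del$ has a maximal left ray or is of type $\cho$. By Lemma~\ref{lcho} and Definition~\ref{droo}, $\del$ then has a unique terminal vertex, its root $r$, and by Lemma~\ref{lcom}(3) every vertex of $\del$ flows into $r$: for each $v\in\spa(\del)$ there is an $m\geq0$ with $v\del^m=r$. Since $r$ is terminal, $r\del=\mz$.

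Next I would record the complementary fact on the $\gam$ side. Since $\gam$ has type $\rro$, it has no maximal chains or maximal left rays, so it has no terminal vertices; consequently every $x\in\spa(\gam)$ lies in $\dom(\gam)$ and its forward orbit never dies, i.e.\ $x\gam^n\neq\mz$ for all $n\geq0$. Let $\phi$ be an rp-homomorphism from $\Gamma(\gam)$ to $\Gamma(\del)$. By Lemma~\ref{lspa}, $\spa(\gam)\subseteq\dom(\phi)$, so $(x\gam^n)\phi\neq\mz$ for every $n$. Because $\phi$ carries arcs to arcs (Definition~\ref{drph}(a)), a straightforward induction on $n$ gives $(x\gam^n)\phi=(x\phi)\del^n$ for all $n\geq0$; in particular $x\phi\in\dom(\del)\subseteq\spa(\del)$.

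The contradiction then falls out quickly: applying the root property to $v=x\phi$ yields $(x\phi)\del^m=r$ for some $m$, whence $(x\gam^{m+1})\phi=(x\phi)\del^{m+1}=r\del=\mz$. But $x\gam^{m+1}\in\spa(\gam)\subseteq\dom(\phi)$ forces $(x\gam^{m+1})\phi\neq\mz$, a contradiction. Hence no such $\phi$ can exist when $\del$ has a root, so $\del$ has neither a maximal left ray nor type $\cho$. I do not expect a serious obstacle here; the only points needing care are the bookkeeping that type $\rro$ genuinely produces no terminal vertices (so that forward orbits are infinite) and the verification that $x\phi\in\spa(\del)$, which is what licenses the use of Lemma~\ref{lcom}(3).
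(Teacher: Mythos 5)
Your proof is correct and follows essentially the same route as the paper's: assume $\del$ has a root $r$, use Lemma~\ref{lcom}(3) to send $x\phi$ into $r$ after finitely many steps, and derive the contradiction that $\phi$ would have to take the value $\mz$ one step further along the (infinite) forward orbit in $\gam$. The only cosmetic difference is that you establish $(x\gam^n)\phi=(x\phi)\del^n$ by induction on arcs, where the paper invokes Lemma~\ref{laggb} to get $\gam\phi=\phi\del$ directly.
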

\begin{proof}
Let $\phi$ be an rp-homomorphism from $\Gamma(\gam)$ to $\Gamma(\del)$. Select a right ray
$[x_0\,x_1\,x_2\ldots\ran$ in $\gam$. Suppose to the contrary that $\del$ has a maximal left ray
or is of type $\cho$. Let $y_0$ be the root of $\del$. By Lemma~\ref{lcom},
$(x_0\phi)\del^k=y_0$ for some integer $k\geq0$. By Lemma~\ref{laggb}, $\gam\phi=\phi\del$, and so
$(x_0\phi)\del^{k+1}=(x_0\gam^{k+1})\phi=x_{k+1}\phi$.
But $(x_0\phi)\del^{k+1}=(x_0\phi)\del^k\del=y_0\del=\mz$, and so $x_{k+1}\phi=\mz$,
which is a contradiction. The result follows.
\end{proof}

\begin{prop}\label{pspl}
Let $S\leq P(X)$ such that $S$ is constant rich, and let $\al,\bt\in S$ with $\al\ne0$. Then there is
an rp-homomorphism $\phi\in S^1$ from $\Gamma(\al)$ to $\Gamma(\bt)$ with $\dom(\phi)=\spa(\al)$ if and only if
\begin{itemize}
  \item [\rm(a)] for every connected component $\gam$ of $\al$, there exist a connected component $\del$ of $\bt$
and an rp-homomorphism $\phi_\gam\in P(X)$ from $\Gamma(\gam)$ to $\Gamma(\del)$ with $\dom(\phi_\gam)=\spa(\gam)$; and
  \item [\rm(b)] $\join_{\gam\in C}\phi_\gam\in S^1$, where $C$ is the collection of connected components
of $\al$.
\end{itemize}
\end{prop}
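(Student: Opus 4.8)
The plan is to reduce everything to the canonical decomposition $\al=\join_{\gam\in C}\gam$ of $\al$ into pairwise completely disjoint connected components (Proposition~\ref{pdec}), together with one elementary observation about a connected component $\del$ of $\bt$: if $u\in\spa(\del)$, then $u\in\dom(\bt)$ if and only if $u\in\dom(\del)$, in which case $u\del=u\bt$. (Indeed, if $u\in\dom(\bt)$ then $u$ lies in $\dom(\del')$ for some component $\del'$ of $\bt$; then $u\in\spa(\del)\cap\spa(\del')$, so $\del=\del'$ by complete disjointness.) I will use this fact, which transfers both arcs and terminality between $\bt$ and its components, in both directions.

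For the forward implication, suppose $\phi\in S^1$ is an rp-homomorphism from $\Gamma(\al)$ to $\Gamma(\bt)$ with $\dom(\phi)=\spa(\al)$. By Lemma~\ref{lgpa}, $\phi\in\pp^1(\al)$, and by Lemma~\ref{laggb}, $\al\phi=\phi\bt$, hence $\al^k\phi=\phi\bt^k$ for all $k\geq0$. Fix a component $\gam$ and put $\phi_\gam=\phi|_{\spa(\gam)}$, so $\dom(\phi_\gam)=\spa(\gam)$. First I would show that $\spa(\gam)\phi$ lies in a single component $\del$ of $\bt$: for $x,y\in\spa(\gam)$ connectedness gives $x\gam^k=y\gam^m\neq\mz$, which equals $x\al^k=y\al^m=:w$ since $\gam\sqs\al$; applying $\al^k\phi=\phi\bt^k$ yields $(x\phi)\bt^k=w\phi=(y\phi)\bt^m\neq\mz$ (here $w\in\spa(\al)=\dom(\phi)$), so $x\phi$ and $y\phi$ are connected in $\bt$. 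I would then check that $\phi_\gam$ is an rp-homomorphism from $\Gamma(\gam)$ to $\Gamma(\del)$: an arc $x\ara y$ of $\gam$ is an arc of $\al$, so $\phi$ sends it to an arc of $\bt$ with both endpoints in $\spa(\del)$, which is an arc of $\del$ by the observation above; and a vertex $x\in\spa(\gam)\sm\dom(\gam)$ lies in $\ima(\gam)\sm\dom(\al)$ (if such $x=z\gam$ were in $\dom(\al)$ it would be connected to $z\in\dom(\gam)$ and hence lie in $\dom(\gam)$), so $x$ is terminal in $\al$ and $x\phi$ is terminal in $\bt$, hence in $\del$. This gives (a); statement (b) is immediate, since the sets $\spa(\gam)$ are disjoint and cover $\spa(\al)=\dom(\phi)$, so $\join_{\gam\in C}\phi_\gam=\phi\in S^1$.

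For the converse, set $\phi=\join_{\gam\in C}\phi_\gam$; by hypothesis (b), $\phi\in S^1$, and since the sets $\spa(\gam)$ are pairwise disjoint with union $\spa(\al)$ we have $\dom(\phi)=\spa(\al)$. To see that $\phi$ is an rp-homomorphism from $\Gamma(\al)$ to $\Gamma(\bt)$, note for condition (a) of Definition~\ref{drph} that an arc $x\ara x\al$ sits inside the unique component $\gam$ containing $x$, so $\phi_\gam$ maps it to an arc of the corresponding $\del$, which is an arc of $\bt$, with both endpoints in $\dom(\phi)$.

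The step I expect to be the main obstacle is condition (b) of Definition~\ref{drph} in the converse, namely upgrading ``terminal in $\del$'' to ``terminal in $\bt$''. Given $x$ terminal in $\Gamma(\al)$ with $x\in\dom(\phi)=\spa(\al)$, necessarily $x\in\ima(\al)$, so $x\in\spa(\gam)\sm\dom(\gam)$ for a unique $\gam$ and $x$ is terminal in $\gam$; thus $x\phi=x\phi_\gam\in\spa(\del)$ with $x\phi\notin\dom(\del)$. By the observation of the first paragraph, $x\phi\in\spa(\del)$ together with $x\phi\notin\dom(\del)$ forces $x\phi\notin\dom(\bt)$, so $(x\phi)\bt=\mz$ and $x\phi$ is terminal in $\Gamma(\bt)$. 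This completes the verification and the proof.
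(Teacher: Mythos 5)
Your proof is correct and follows essentially the same route as the paper: restrict $\phi$ to $\spa(\gam)$ for each component in the forward direction (using $\al^k\phi=\phi\bt^k$ to see that $\spa(\gam)\phi$ lands in a single component of $\bt$), and join the $\phi_\gam$ for the converse. Your explicit preliminary observation --- that for $u\in\spa(\del)$ one has $u\in\dom(\bt)$ iff $u\in\dom(\del)$, by complete disjointness of the components of $\bt$ --- is exactly the point the paper leaves implicit when it passes from ``terminal in $\Gamma(\del)$'' to ``terminal in $\Gamma(\bt)$'', so making it explicit is a welcome refinement rather than a departure.
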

\begin{proof}
Suppose there is an rp-homomorphism $\phi\in S^1$ from $\Gamma(\al)$ to $\Gamma(\bt)$
such that $\dom(\phi)=\spa(\al)$.
Let $\gam$ be a connected
component of $\al$ and let $x\in\spa(\gam)$. Then, by Proposition~\ref{pdec}, $x\phi\in\del$ for some
connected component $\del$ of $\bt$.
We claim that $(\spa(\gam))\phi\subseteq\spa(\del)$. Let $z\in\spa(\gam)$.
Since $\gam$ is connected, $x\al^k=x\gam^k=z\gam^m=z\al^m\ne\mz$ for some integers $k,m\geq0$.
By Lemma~\ref{laggb}, we have $\al\phi=\phi\bt$, and so $(z\phi)\bt^m=(z\al^m)\phi=(x\al^k)\phi=(x\phi)\bt^k\ne\mz$,
which implies that $z\phi$ and $x\phi$ are in the span of the same connected component of $\bt$,
that is, $z\phi\in\spa(\del)$. The claim has been proved.
Let $\phi_\gam=\phi|_{\spa(\gam)}$. Then $\phi_\gam$ is an rp-homomorphism from $\Gamma(\gam)$ to $\Gamma(\del)$
(by the claim and the fact that $\phi$ is an rp-homomorphism from $\Gamma(\al)$ to $\Gamma(\bt)$),
$\dom(\phi_\gam)=\spa(\gam)$ (by the definition of $\phi_\gam$), and $\join_{\gam\in C}\phi_\gam=\phi\in S^1$
(by the definition of $\phi_\gam$ and the fact that $\dom(\phi)=\spa(\al)$).

Conversely, suppose that (a) and (b) are satisfied. Let $\phi=\join_{\gam\in C}\phi_\gam$.
Note that $\phi$ is well defined since $\phi_\gam$ and $\phi_{\gam'}$ are disjoint
if $\gam\ne\gam'$.
Suppose $y\ara z$. Then $y,z\in\spa(\gam)$ for some connected component $\gam$ of $\al$. Thus $y,z\in\dom(\phi_\gam)$
and $y\phi=y\phi_\gam\ard z\phi_\gam=z\phi$, implying $y\phi\arb z\phi$. Suppose $y$ is a terminal vertex in $\Gamma(\al)$
and $y\in\dom(\phi)$. Then, there is a unique connected component $\gam$ of $\al$ such that $y$ is a terminal
vertex in $\Gamma(\gam)$. Then $y\phi=y\phi_\gam$ is a terminal vertex in $\Gamma(\del)$,
and so a terminal vertex in $\Gamma(\bt)$. Hence $\phi$ is an rp-homomorphism from $\Gamma(\al)$ to $\Gamma(\bt)$.
Moreover, $\dom(\phi)=\spa(\al)$ (by the definition of $\phi$) and $\phi\in\ S^1$ (by (b)).
\end{proof}

\begin{lemma}\label{lcom1}
Let $\al,\bt\in P(X)$ be such that $\Gamma(\al)$ is rp-homomorphic to $\Gamma(\bt)$. If $\al$ has a cycle
of length $k$, then $\bt$ has a cycle of length $m$ such that $m\,|\, k$.
\end{lemma}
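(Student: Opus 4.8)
The plan is to reuse, essentially verbatim, the ``only if'' direction of Proposition~\ref{pcyc}, the point being that connectivity of the two transformations is never actually invoked there. So I would fix an rp-homomorphism $\phi$ from $\Gamma(\al)$ to $\Gamma(\bt)$ together with a cycle $(x_0\,x_1\ldots\,x_{k-1})$ in $\al$; in digraph terms this says $x_0\ara x_1\ara\cdots\ara x_{k-1}\ara x_0$.

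First I would push the cycle through $\phi$. Setting $y_i=x_i\phi$ for $i=0,1,\ldots,k-1$, I apply condition (a) of Definition~\ref{drph} to each of the $k$ arcs $(x_i,x_{i+1})$ (indices read modulo $k$): since each such pair lies in $R_\al$, we obtain $x_i\in\dom(\phi)$ and $y_i\arb y_{i+1}$. Concatenating these arcs yields $y_0\arb y_1\arb\cdots\arb y_{k-1}\arb y_0$ in $\Gamma(\bt)$, and hence $y_0\bt^k=y_0$.

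Next I would extract a cycle in $\bt$ and control its length. Let $m$ be the smallest integer in $\{1,\ldots,k\}$ with $y_0\bt^m=y_0$; such an $m$ exists because $y_0\bt^k=y_0$. Then $(y_0\,y_1\ldots\,y_{m-1})$ is a cycle of length $m$ in $\bt$. To see that $m\,|\,k$, write $k=mq+r$ with $0\le r<m$ by the Division Algorithm. From $y_0\bt^m=y_0$ we get $y_0\bt^{mq}=y_0$, so $y_0=y_0\bt^k=(y_0\bt^{mq})\bt^r=y_0\bt^r$; minimality of $m$ then forces $r=0$, i.e.\ $k=mq$ and $m\,|\,k$.

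The only subtlety worth flagging is the observation that opens the argument: Proposition~\ref{pcyc} is stated for \emph{connected} $\gam,\del$, whereas here $\al,\bt\in P(X)$ are arbitrary. But in its forward implication the connectedness hypothesis is never used---one merely follows a single cycle and its image under $\phi$---so the identical computation goes through. I do not expect any genuine obstacle beyond making this observation explicit.
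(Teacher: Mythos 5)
Your proof is correct. It differs from the paper's in that the paper proves Lemma~\ref{lcom1} by combining Proposition~\ref{pspl} with Proposition~\ref{pcyc}: one first localizes, via the component decomposition, the rp-homomorphism $\phi$ to an rp-homomorphism from the connected component of $\al$ containing the given cycle into some connected component of $\bt$, and then invokes Proposition~\ref{pcyc} as a black box on that pair of connected transformations. You instead bypass the decomposition entirely, observing that the forward direction of Proposition~\ref{pcyc} never uses connectedness of $\gam$ or $\del$ --- one only follows the $k$ arcs of the cycle and their images under condition (a) of Definition~\ref{drph} --- and you rerun that computation verbatim for arbitrary $\al,\bt$. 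Your observation is accurate, and your route is arguably cleaner: it avoids having to check the hypotheses of Proposition~\ref{pspl} (constant richness of the ambient semigroup and the normalization $\dom(\phi)=\spa(\al)$, which in the paper's route requires an appeal to Lemma~\ref{lspa} or Remark~\ref{riso}). What the paper's route buys is uniformity, since Proposition~\ref{pspl} is the standard reduction used throughout Section~\ref{spx}. The only point you pass over silently --- as does the paper --- is that the minimality of $m$ forces $y_0,\ldots,y_{m-1}$ to be pairwise distinct, so that $(y_0\,y_1\ldots\,y_{m-1})$ really is a cycle; this follows because $y_i=y_j$ with $0\leq i<j\leq m-1$ would give $y_0\bt^{\,m-j+i}=y_0$ with $0<m-j+i<m$.
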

\begin{proof}
It follows immediately from Propositions~\ref{pcyc} and~\ref{pspl}.
\end{proof}

A binary relation $R$ on a set $A$ is called \emph{well founded} if every nonempty
subset $B\subseteq A$ contains an $R$-minimal element; that is, $a\in B$ exists such that there is no $y\in B$
with $(y,a)\in R$ \cite[page~25]{Je06}. Let $R$ be a well-founded relation on $A$. Then there is a unique
function $\rho$ defined on $A$ with ordinals as values such that for every $x\in A$,
\begin{equation}\label{ewel}
\rho(x)=\sup\{\rho(y)+1:(y,x)\in R\}.
\end{equation}
The ordinal $\rho(x)$ is called the \emph{rank} of $x$ in $\lan A,R\ran$ \cite[Theorem~2.27]{Je06}.

For a mapping $f:A\to B$ and $b\in B$, we denote by $bf^{-1}$ the preimage of $b$ under $f$.

\begin{defi}\label{dord}
{\rm
Let $\gam\in P(X)$ be connected of type $\rro$ or $\cho$. Recall that $R_\gam$ is a binary relation on $\spa(\gam)$
defined by $(y,x)\in R_\gam$ if $y\gam=x$. (Note that $(y,x)\in R_\gam\miff y\arga x\miff y\in x\gam^{-1}$).
The relation  $R_\gam$ is well founded since there is no sequence
$\lan x_0,x_1,x_2,\ldots\ran$ such that $\cdots\arga x_2\arga x_1\arga x_0$.
For $x\in\spa(\gam)$,we will denote the rank of $x$ in $\lan\spa(\gam),R_\gam\ran$ by $\rho_\gam(x)$
(or $\rho(x)$ if $\gam$ is clear from the context).

It follows from (\ref{ewel}) that for every $x\in\spa(\gam)$ with $\rho(x)>0$, we have
$\rho(y)<\rho(x)$ for every $y\in x\gam^{-1}$, and if $\nu=\sup\{\rho(y):y\in x\gam^{-1}\}$ then
\begin{equation}\label{edord1}
\rho(x)=\left\{\begin{array}{ll}
\nu+1&\mbox{if $\rho(y)=\nu$ for some $y\in x\gam^{-1}$},\\
\nu&\mbox{if $\rho(y)<\nu$ for every $y\in x\gam^{-1}$}.
\end{array}\right.
\end{equation}
(Indeed, suppose that $\rho(y_0)=\nu$ for some $y_0\in x\gam^{-1}$. Then
$\rho(y)\leq\rho(y_0)$ for all $y\in x\gam^{-1}$. Thus $\rho(y)+1\leq\rho(y_0)+1$ for all $y\in x\gam^{-1}$, and so
$\rho(x)=\sup\{\rho(y)+1:y\in x\gam^{-1}\}=\rho(y_0)+1=\nu+1$.
Now suppose that $\rho(y)<\nu$ for every $y\in x\gam^{-1}$. Then $\rho(y)+1\leq\nu$ for every $y\in x\gam^{-1}$,
and so $\nu=\sup\{\rho(y):y\in x\gam^{-1}\}\leq\sup\{\rho(y)+1:y\in x\gam^{-1}\}\leq\nu$. Thus $\rho(x)=\sup\{\rho(y)+1:y\in x\gam^{-1}\}=\nu$.)
}
\end{defi}

\begin{example}\label{eord}
{\rm
Let $X=\{x_0,x_1,x_2,\ldots,y_0,y_1,y_2,\ldots\}$ and let
\[
\gam=[x_0\,x_1\,x_2\,x_3\ldots\ran\jo[y_0\,x_2]\jo[y_1\,y_2\,x_2]\jo[y_3\,y_4\,y_5\,x_2]\jo[y_6\,y_7\,y_8\,y_9\,x_2]\jo\cdots\in P(X).
\]
Then $\gam$ is connected of type $\rro$ and we have: $\rho(x_0)=0$, $\rho(x_1)=1$, and $\rho(x_{2+i})=\ome+i$ for every $i\geq0$,
where $\omega$ is the smallest infinite ordinal.
We also have: $\rho(y_6)=0$, $\rho(y_7)=1$, $\rho(y_8)=2$, and $\rho(y_9)=3$.
}
\end{example}

\begin{example}\label{eord2}
{\rm
Let $X=\{y_0,y_1,y_2,\ldots\}\cup\bigcup_{i=0}^{\infty}\{z_0^i,z_1^i,z_2^i,\ldots\}$.
For every integer $i\geq0$, let
\[
\del_i=[z_1^i\,z_0^i]\jo[z_2^i\,z_3^i\,z_0^i]\jo[z_4^i\,z_5^i\,z_6^i\,z_0^i]\jo[z_7^i\,z_8^i\,z_9^i\,z_{10}^i\,z_0^i]\jo\cdots\in P(X).
\]
Then each $\del_i$ is connected of type $\cho$ and $\rho_{\del_i}(z_0^i)=\ome$. Further, let
\[
\gam=(\del_0\jo[z_0^0\,y_0])\jo(\del_1\jo[z_0^1\,y_1\,y_0])\jo(\del_2\jo[z_0^2\,y_2\,y_3\,y_0])\jo(\del_3\jo[z_0^3\,y_4\,y_5\,y_6\,y_0])\jo\cdots\in P(X).
\]
Then $\gam$ is connected of type $\cho$ and $\rho_\gam(y_0)=\ome+\ome=2\ome$.
}
\end{example}

We will need the following lemma from the theory of well-founded relations \cite[Appendix~B]{Ke95}.

\begin{lemma}\label{lapb}
Let $R_1$ and $R_2$ be well-founded relations on $A_1$ and $A_2$, respectively. Suppose a function
$f:A_1\to A_2$ is such that for all $x,y\in A_1$, if $(x,y)\in R_1$ then $(xf,yf)\in R_2$.
The for all $x\in A_1$, $\rho(x)\leq\rho(xf)$.
\end{lemma}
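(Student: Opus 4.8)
The plan is to proceed by \emph{well-founded induction} on $R_1$, which is legitimate precisely because $R_1$ is assumed to be a well-founded relation on $A_1$. Throughout, I write $\rho$ for the rank function in whichever of the two structures $\lan A_1,R_1\ran$, $\lan A_2,R_2\ran$ is under discussion (the intended one always being clear from context). Recall from (\ref{ewel}) that $\rho(x)=\sup\{\rho(y)+1:(y,x)\in R_1\}$ for every $x\in A_1$, and likewise for elements of $A_2$ relative to $R_2$.

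First I would isolate the key monotonicity fact at the level of a single arc. Suppose $(y,x)\in R_1$. By the hypothesis on $f$ this yields $(yf,xf)\in R_2$, so $yf$ is one of the elements over which the supremum defining $\rho(xf)$ is taken; hence $\rho(yf)+1\le\rho(xf)$.

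The induction itself then runs as follows. Fix $x\in A_1$ and assume, as the induction hypothesis, that $\rho(y)\le\rho(yf)$ for every $y$ with $(y,x)\in R_1$. For each such $y$ I combine the induction hypothesis with the arc observation to obtain
\[
\rho(y)+1\le\rho(yf)+1\le\rho(xf).
\]
Thus $\rho(xf)$ is an upper bound for the set $\{\rho(y)+1:(y,x)\in R_1\}$, and since $\rho(x)$ is by definition the least upper bound (the supremum) of this set, we conclude $\rho(x)\le\rho(xf)$. If $x$ is $R_1$-minimal then the indexing set is empty, so $\rho(x)=\sup\emptyset=0$ and the inequality holds trivially. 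This completes the induction and hence the proof.

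The argument is essentially routine; the only point requiring a little care is the manipulation of suprema of ordinals, namely that an ordinal dominating every member $\rho(y)+1$ of the set also dominates their supremum $\rho(x)$. I expect no genuine obstacle beyond being precise about which of the two rank functions is meant at each occurrence of $\rho$, and beyond confirming that well-founded induction is available (which it is, directly from the well-foundedness of $R_1$).
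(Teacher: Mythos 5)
Your proof is correct. The paper does not actually prove this lemma; it is quoted as a known fact from the theory of well-founded relations with a citation to \cite[Appendix~B]{Ke95}. Your argument --- well-founded induction on $R_1$, using that each $(y,x)\in R_1$ gives $(yf,xf)\in R_2$ and hence $\rho(yf)+1\le\rho(xf)$, so that $\rho(xf)$ bounds every term $\rho(y)+1$ and therefore bounds their supremum $\rho(x)$ --- is the standard proof and is complete, including the vacuous base case for $R_1$-minimal elements.
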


\begin{nota}\label{ndax}
{\rm
Let $\gam\in P(X)$ be connected and let $x\in\spa(\gam)$. We denote by $\dax$ the set of all $y\in\spa(\gam)$
such that $x=y\gam^m$ for some $m\geq0$. If $x\in\ima(\gam)$, we denote by $\gam_x$ the restriction
of $\gam$ to $\dax\sm\{x\}$. Note that $\gam_x$ is connected and it either contains a maximal left ray or is of type $\cho$,
and that, in either case, $x$ is the root of $\gam_x$.
}
\end{nota}

\begin{lemma}\label{linc}
Let $\gam,\del\in P(X)$ be connected such that $\gam$ is of type $\rro$ or $\cho$ and $\del$ is contained in $\gam$. Then
for every $x\in\spa(\del)$:
\begin{itemize}
  \item [\rm(1)] $\rho_\del(x)\leq \rho_\gam(x)$;
  \item [\rm(2)] if $\del=\gam_z$ for some $z\in\ima(\gam)$, then $\rho_\del(x)=\rho_\gam(x)$.
\end{itemize}
\end{lemma}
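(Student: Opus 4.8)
The plan is to treat the two parts separately, deriving part (1) from the rank-monotonicity statement of Lemma~\ref{lapb} and part (2) from a transfinite induction once the preimage structure of $\gam_z$ is understood.

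For part (1), first observe that $\del$, being connected and contained in $\gam$, inherits from $\gam$ the absence of cycles, double rays, and left rays (any such configuration in $\del$ would be one in $\gam$, since $\del\sqs\gam$); hence $\del$ is itself of type $\rro$ or $\cho$ and $\rho_\del$ is defined on $\spa(\del)$. Because $\del\sqs\gam$ gives $\spa(\del)\subseteq\spa(\gam)$, I would apply Lemma~\ref{lapb} to the inclusion map $f\colon\spa(\del)\to\spa(\gam)$. The hypothesis of that lemma is checked in one line: if $(x,y)\in R_\del$ then $x\del=y$, and $\del\sqs\gam$ forces $x\gam=x\del=y$, so $(xf,yf)=(x,y)\in R_\gam$. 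Lemma~\ref{lapb} then yields $\rho_\del(x)\leq\rho_\gam(xf)=\rho_\gam(x)$ for every $x\in\spa(\del)$, which is exactly part (1).

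For part (2), write $\del=\gam_z$, so that $\dom(\del)=\daz\sm\{z\}$ and $\spa(\del)=\daz$ with $z$ its root; since $\gam$ has no left rays neither does $\gam_z$, so $\del$ is of type $\cho$ and $\rho_\del$ is defined. The heart of the matter is the following claim: for every $x\in\daz$ the $\gam$-predecessors and the $\del$-predecessors of $x$ coincide, i.e.\ $\{y:y\gam=x\}=\{y:y\del=x\}$. The inclusion from $\del$ to $\gam$ is immediate from $\del\sqs\gam$. For the reverse inclusion, suppose $y\gam=x$; since $x\in\daz$ we have $x\gam^m=z$ for some $m\geq0$, whence $y\gam^{m+1}=z$ and so $y\in\daz$. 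It remains to exclude $y=z$: were $y=z$, then $z\gam=x$ and $z=x\gam^m=z\gam^{m+1}$ would place $z$ on a cycle, contradicting that $\gam$, being of type $\rro$ or $\cho$, has no cycles. Hence $y\in\daz\sm\{z\}=\dom(\del)$ and $y\del=y\gam=x$, which proves the claim.

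With the claim in hand, part (2) follows by well-founded induction on $R_\del$: for $x\in\spa(\del)$, assuming $\rho_\del(y)=\rho_\gam(y)$ for every $R_\del$-predecessor $y$ of $x$, the defining recursion (\ref{ewel}) gives
\[
\rho_\del(x)=\sup\{\rho_\del(y)+1:(y,x)\in R_\del\}=\sup\{\rho_\gam(y)+1:(y,x)\in R_\gam\}=\rho_\gam(x),
\]
where the middle equality uses the induction hypothesis together with the claim, which guarantees that the two index sets are identical (the case of minimal $x$ is subsumed, both sets then being empty). I expect the main obstacle to be precisely the exclusion of $y=z$ in the claim: this is the one place where the no-cycle hypothesis is essential, and it is what prevents $\gam$-preimages of points of $\daz$ from escaping $\dom(\del)$, so that the two rank recursions run in lockstep. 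Everything else—the inheritance of type by $\del$, the application of Lemma~\ref{lapb}, and the induction itself—is routine.
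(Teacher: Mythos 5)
Your proposal is correct and follows essentially the same route as the paper: part (1) via Lemma~\ref{lapb} applied to the inclusion map, and part (2) by well-founded induction using the identity $x\del^{-1}=x\gam^{-1}$ for $\del=\gam_z$. The only difference is that you explicitly verify that identity (in particular ruling out $y=z$ via the absence of cycles), a step the paper states without proof; this is a welcome addition rather than a divergence.
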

\begin{proof}
First note that $\del$ must be of type $\rro$ or $\cho$. Statement~(1) follows from Lemma~\ref{lapb}
with $f:\spa(\del)\to\spa(\gam)$ defined by $xf=x$. To prove (2), we suppose $\del=\gam_z$ and proceed by
well-founded induction \cite[Theorem~2.6]{Je06}. Let $x\in\spa(\del)$.
The result is true if $x$ is $R_\del$-minimal since then $x$ is also $R_\gam$-minimal.
Suppose $\rho_\del(y)=\rho_\gam(y)$ for all $y\in\spa(\del)$ such that $(y,x)\in R_\del$. Then
\[
\rho_\del(x)=\sup\{\rho_\del(y)+1:(y,x)\in R_\del\}=\sup\{\rho_\gam(y)+1:(y,x)\in R_\gam\}=\rho_\gam(x),
\]
where the last but one equality follows from the inductive hypothesis and the fact that for $\del=\gam_z$,
$x\del^{-1}=x\gam^{-1}$ for all $x\in\spa(\del)$.
\end{proof}

\begin{prop}\label{pcho}
Let $\gam,\del\in P(X)$ be connected of type $\cho$ with roots $x_0$ and $y_0$, respectively. Then
$\Gamma(\gam)$ is rp-homomorphic to $\Gamma(\del)$ if and only if $\rho(x_0)\leq \rho(y_0)$.
\end{prop}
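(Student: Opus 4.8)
The plan is to prove the two implications separately, using the rank function on the well-founded relations $R_\gam$ and $R_\del$ as the main device; write $P_\gam=x_0\gam^{-1}$ and $P_\del=y_0\del^{-1}$ for the immediate predecessors of the roots, so that $\rho(x_0)=\sup\{\rho(x)+1:x\in P_\gam\}$ and $\rho(y_0)=\sup\{\rho(y)+1:y\in P_\del\}$ by (\ref{ewel}). For the forward implication, suppose $\phi$ is an rp-homomorphism from $\Gamma(\gam)$ to $\Gamma(\del)$. By Lemma~\ref{lspa} we have $\spa(\gam)\subseteq\dom(\phi)$, a short check gives $(\spa(\gam))\phi\subseteq\spa(\del)$, and condition~(a) of Definition~\ref{drph} says precisely that $\phi|_{\spa(\gam)}$ carries $R_\gam$ into $R_\del$. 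Hence Lemma~\ref{lapb} applies and yields $\rho(x_0)\le\rho(x_0\phi)$. Since $x_0$ is the root of $\gam$ it is terminal in $\Gamma(\gam)$, so condition~(b) of Definition~\ref{drph} makes $x_0\phi$ terminal in $\Gamma(\del)$; as $x_0\phi\in\spa(\del)$ and $\del$ is of type $\cho$, the uniqueness of the terminal vertex (Lemma~\ref{lcho}) forces $x_0\phi=y_0$, and therefore $\rho(x_0)\le\rho(y_0)$.

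For the converse I would construct an rp-homomorphism by transfinite induction on the ordinal $\rho(x_0)$. The engine is the \emph{rank-domination} step: if $\rho(x_0)\le\rho(y_0)$, then for every $x\in P_\gam$ there is some $y\in P_\del$ with $\rho(x)\le\rho(y)$. Indeed, were $\rho(y)<\rho(x)$ for all $y\in P_\del$, then $\rho(y)+1\le\rho(x)$ for all such $y$, giving $\rho(y_0)=\sup\{\rho(y)+1:y\in P_\del\}\le\rho(x)<\rho(x)+1\le\rho(x_0)\le\rho(y_0)$, a contradiction. Fix for each $x\in P_\gam$ such a choice $\sigma(x)\in P_\del$. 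I send $x_0$ to $y_0$, and for each $x\in P_\gam$ I treat its branch as follows. If $\rho(x)=0$ (a leaf), I set $x\phi=\sigma(x)$. If $\rho(x)\ge1$, then $x\in\ima(\gam)$, so $\gam_x$ is defined, is connected of type $\cho$ (being contained in the ray-free $\gam$), has root $x$, and satisfies $\rho_{\gam_x}(x)=\rho(x)$ by Lemma~\ref{linc}(2); likewise $\sigma(x)\in\ima(\del)$, so $\del_{\sigma(x)}$ is of type $\cho$ with root $\sigma(x)$ and $\rho_{\del_{\sigma(x)}}(\sigma(x))=\rho(\sigma(x))\ge\rho(x)$. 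As $\rho(x)<\rho(x_0)$, the induction hypothesis supplies an rp-homomorphism $\phi_x$ from $\Gamma(\gam_x)$ to $\Gamma(\del_{\sigma(x)})$ with $x\phi_x=\sigma(x)$ and $\dom(\phi_x)=\spa(\gam_x)$.

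Finally I would take $\phi$ to be the join of the assignment $x_0\mapsto y_0$ with the maps $\phi_x$ (reading $\phi_x$ as the one-point map $x\mapsto\sigma(x)$ when $x$ is a leaf). Since $\spa(\gam)$ is the disjoint union of $\{x_0\}$ with the branches $\dax$ for $x\in P_\gam$---each non-root vertex lying in the branch determined by the unique predecessor of $x_0$ on its forward orbit---the join is well defined and has domain $\spa(\gam)$. Checking that $\phi$ is an rp-homomorphism is then routine: an arc $w\ra w\gam$ with $w\gam=x_0$ maps to the arc $\sigma(w)\ra y_0$ of $\del$, every other arc lies inside a single branch and is handled by the relevant $\phi_x$, and the unique terminal vertex $x_0$ of $\Gamma(\gam)$ is sent to the terminal vertex $y_0$ of $\Gamma(\del)$. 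I expect the converse to be the main obstacle; within it, the delicate points are getting the rank-domination inequality exactly right so that the recursion is always fed admissible data, and confirming that the branches $\gam_x$ and $\del_{\sigma(x)}$ are again of type $\cho$ with the rank identities of Lemma~\ref{linc}, so that the induction hypothesis genuinely applies to them.
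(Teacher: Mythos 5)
Your proof is correct and follows essentially the same route as the paper's: Lemma~\ref{lapb} (after pinning down $x_0\phi=y_0$ via the unique terminal vertex) for the forward direction, and transfinite induction on $\rho(x_0)$ with the branch decomposition $\spa(\gam)=\{x_0\}\cup\bigcup_{x\in x_0\gam^{-1}}\dax$ and Lemma~\ref{linc}(2) for the converse. Your explicit justification of the rank-domination step (that $\rho(x_0)\le\rho(y_0)$ forces each predecessor of $x_0$ to be matched by a predecessor of $y_0$ of at least its rank) is a detail the paper asserts without proof, but the argument is otherwise the same.
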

\begin{proof}
Suppose there is an rp-homomorphism $\phi$ from $\Gamma(\gam)$ to $\Gamma(\del)$. Then $\phi:\spa(\gam)\to\spa(\del)$,
$x_0\phi=y_0$, and
for all $x,y\in\spa(\gam)$, if $x\arga y$ then $x\phi\ard y\phi$. Thus $\rho(x_0)\leq \rho(y_0)$ by Lemma~\ref{lapb}.

Conversely, suppose $\rho(x_0)\leq \rho(y_0)$. We will prove that $\Gamma(\gam)$ is rp-homomorphic to $\Gamma(\del)$
by transfinite induction on $\rho(x_0)$. Let $\rho(x_0)=1$. Then for every $z\in\dom(\gam)$, we have $z\arga x_0$.
Since $\rho(y_0)\geq \rho(x_0)=1$, there is some $w\in\dom(\del)$ such that $w\ard y_0$. Define $\phi$ on $\spa(\gam)$ by:
$x_0\phi=y_0$ and $z\phi=w$ for every $z\in\dom(\gam)$. Then clearly $\phi$ is an rp-homomorphism
from $\Gamma(\gam)$ to $\Gamma(\del)$.

Let $\rho(x_0)=\mu>1$ and suppose that for all connected $\gam_1,\del_1\in P(X)$ of type $\cho$ with roots $z$ and $w$,
respectively, if $\rho(z)<\mu$ and $\rho(z)\leq \rho(w)$, then $\Gamma(\gam_1)$ is rp-homomorphic to $\Gamma(\del_1)$.

Let $z\in x_0\gam^{-1}$ and note that $\rho(z)<\mu$. Since $\rho(y_0)\geq\mu$, there is
$w_z\in y_0\del^{-1}$ such that $\rho(z)\leq \rho(w_z)$. If $z\in\ima(\gam)$, then $\gam_z$ and $\del_{w_z}$
are connected with $\rho_{\gam_z}(z)=\rho(z)\leq \rho(w_z)=\rho_{\del_{w_z}}(w_z)$, and so, by the inductive hypothesis, there is an rp-homomorphism
$\phi_z$ from $\Gamma(\gam_z)$ to $\Gamma(\del_{w_z})$.
If $z\notin\ima(\gam)$ (that is, if $\daz=\{z\}$),
we define $\phi_z$ on $\daz=\{z\}$ by $z\phi_z=w_z$.

Define $\phi$ on $\spa(\gam)$ by: $x_0\phi=y_0$ and $u\phi=u\phi_z$ if $u\in\,\daz$ for some $z\in x_0\gam^{-1}$.
Then $\phi$ is well-defined since the collection $\{\daz\}_{z\in x_0\gam^{-1}}$ is a partition of
$\dom(\gam)$ ($=\spa(\gam)\sm\{x_0\}$). Suppose $u\arga v$. If $v\in\,\daz$ for some $z\in x_0\gam^{-1}$, then $u\in\,\daz$ as well, and so
$u\phi=u\phi_z\ard v\phi_z=v\phi$. If $v=x_0$, then $u=z\in x_0\gam^{-1}$, and so $u\phi=z\phi=z\phi_z=w_z\ard y_0=x_0\phi=v\phi$.
Hence, since $x_0\phi=y_0$ and $x_0$ is the unique terminal vertex of $\Gamma(\gam)$,
$\phi$ is an rp-homomorphism from $\Gamma(\gam)$ to $\Gamma(\del)$.
\end{proof}

\begin{defi}\label{ddom}
{\rm
Let $\lan a_n\ran_{n\geq0}$ and $\lan b_n\ran_{n\geq0}$ be sequences of ordinals (indexed by nonnegative integers $n$).
We say that $\lan b_n\ran$ \emph{dominates} $\lan a_n\ran$ if there is $k\geq0$ such that
\[
b_{k+n}\geq a_n\mbox{ for every $n\geq0$}.
\]
}
\end{defi}

\begin{nota}\label{nseq}
{\rm
Let $\gam\in P(X)$ be connected of type $\rro$ and let $\eta=[x_0\,x_1\,x_2\ldots\ran$ be a maximal right ray in $\gam$.
We denote by $\lan \eta^\gam_n\ran_{n\geq0}$ the sequence of ordinals such that
\[
\eta^\gam_n=\rho_\gam(x_n)\mbox{ for every $n\geq0$}.
\]
}
\end{nota}

For example, for $\gam$ from Example~\ref{eord} and the right ray $\eta=[x_0\,x_1\,x_2\ldots\ran$ in $\gam$,
the sequence $\lan \eta^\gam_n\ran$ is $\lan0,\,1,\,\omega,\,\omega+1,\,\omega+2,\,\omega+3,\ldots\ran$.

\begin{prop}\label{prro}
Let $\gam,\del\in P(X)$ be connected of type $\rro$. Then
$\Gamma(\gam)$ is rp-homomorphic to $\Gamma(\del)$ if and only if there are maximal right rays $\eta$ in $\gam$
and $\xi$ in $\del$ such that $\lan \xi^\del_n\ran$ dominates~$\lan \eta^\gam_n\ran$.
\end{prop}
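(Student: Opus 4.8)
The plan is to prove both implications through the rank function $\rho$ of Definition~\ref{dord}, which is available because $\gam$ and $\del$ have type $\rro$, together with the monotonicity Lemma~\ref{lapb}. Throughout I would use that a connected transformation of type $\rro$ has no terminal vertices, carries a maximal right ray, and has no left rays (Remark~\ref{rtyp}(3)); in particular, since $x_n\arga x_{n+1}$ forces $\rho_\gam(x_n)<\rho_\gam(x_{n+1})$ by (\ref{ewel}), the sequence $\lan\eta^\gam_n\ran$ along any maximal right ray is strictly increasing, and likewise $\lan\xi^\del_n\ran$.

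For the forward direction I would fix a maximal right ray $\eta=[x_0\,x_1\,x_2\ldots\ran$ of $\gam$ and a maximal right ray $\xi=[w_0\,w_1\,w_2\ldots\ran$ of $\del$, and let $\phi$ be a given rp-homomorphism. Since $\phi$ preserves arcs, the images $z_n=x_n\phi$ satisfy $z_{n+1}=z_n\del$, so $z_0\in\dom(\del)$ and, by Lemma~\ref{lcom}(2), the forward orbit of $z_0$ meets $\xi$, say $z_0\del^{m_0}=w_{i_0}$, giving $z_n=w_{i_0+n-m_0}$ for $n\ge m_0$. Applying Lemma~\ref{lapb} to $\phi|_{\spa(\gam)}$ yields $\eta^\gam_n=\rho_\gam(x_n)\le\rho_\del(z_n)$ for every $n$. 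I would then read off domination with $k=i_0$: for $n\ge m_0$ this is $\eta^\gam_n\le\xi^\del_{n+i_0-m_0}\le\xi^\del_{n+k}$, and for $n<m_0$ monotonicity gives $\eta^\gam_n\le\eta^\gam_{m_0}\le\rho_\del(z_{m_0})=\xi^\del_{i_0}\le\xi^\del_{n+k}$.

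For the converse I would construct $\phi$ on $\spa(\gam)=\dom(\gam)$ block by block. First partition $\spa(\gam)$ into the sets $D_i$ of points whose forward orbit first meets $\eta$ at $x_i$ (a partition by Lemma~\ref{lcom}(2)), and observe that $\sigma_i:=\gam|_{D_i}$ is connected with root $x_i$ and, as $\gam$ has no left rays, has type $\cho$ (or is the single vertex $x_i$). I would map the ray by $x_i\mapsto w_{i+k}$ and extend over each block using Proposition~\ref{pcho}: by Lemma~\ref{linc}(1), $\rho_{\sigma_i}(x_i)\le\rho_\gam(x_i)=\eta^\gam_i$, while the full predecessor tree $\del_{w_{i+k}}$ is of type $\cho$ with root rank $\rho_{\del_{w_{i+k}}}(w_{i+k})=\rho_\del(w_{i+k})=\xi^\del_{i+k}$ by Lemma~\ref{linc}(2); domination then gives $\rho_{\sigma_i}(x_i)\le\xi^\del_{i+k}$, so Proposition~\ref{pcho} provides an rp-homomorphism $\phi_i$ from $\Gamma(\sigma_i)$ to $\Gamma(\del_{w_{i+k}})$ sending $x_i$ to $w_{i+k}$. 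Setting $\phi=\join_i\phi_i$, the only inter-block arcs are $x_i\arga x_{i+1}$, sent to $w_{i+k}\ard w_{i+k+1}$ on $\xi$, and all other arcs lie inside some $\phi_i$; since $\gam$ has no terminal vertices, $\phi$ is the required rp-homomorphism.

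The hard part will be the converse, and specifically reconciling the two rank measurements at a ray point: the off-ray block $\sigma_i$ only guarantees $\rho_{\sigma_i}(x_i)\le\eta^\gam_i$, whereas the correct target is the \emph{full} predecessor tree $\del_{w_{i+k}}$, whose root rank equals $\xi^\del_{i+k}$ by Lemma~\ref{linc}(2). Threading these inequalities through the shift $k$, and dispatching the edge cases $i+k=0$ and the trivial single-vertex blocks, is the crux; by contrast the only delicate point in the forward direction is choosing one $k$ that simultaneously handles the finitely many indices $n<m_0$.
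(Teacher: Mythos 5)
Your proof is correct, and the converse direction coincides with the paper's argument: the same partition of $\spa(\gam)$ into the blocks $D_i=\,\downarrow\!\!x_i\sm\downarrow\!\!x_{i-1}$, the same comparison $\rho_{\sigma_i}(x_i)\leq\eta^\gam_i\leq\xi^\del_{i+k}=\rho_{\del_{w_{i+k}}}(w_{i+k})$ via Lemma~\ref{linc}, the same invocation of Proposition~\ref{pcho} on each block, and the same gluing along the single inter-block arcs $x_i\arga x_{i+1}$. (One notational nit: for $\sigma_i$ to be of type $\cho$ with root $x_i$ you want $\gam|_{D_i\sm\{x_i\}}$, as in the paper's $\gam_n$, rather than $\gam|_{D_i}$, which would still contain the arc $x_i\arga x_{i+1}$; your treatment of the singleton blocks makes clear this is what you intend. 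Also note that $D_0=\{x_0\}$ automatically, so the worry about $i+k=0$ never materializes: $\del_{w_{i+k}}$ is only needed for $i\geq1$, where $w_{i+k}\in\ima(\del)$.) Where you genuinely diverge is the forward direction. The paper fixes only $\eta$, manufactures the witness ray $\xi$ in $\del$ as the maximal right ray through $x_0\phi$ (so that $x_n\phi=y_{k+n}$ on the nose), and then deduces $\rho(x_n)\leq\rho(y_{k+n})$ blockwise via Proposition~\ref{pcho} and Lemma~\ref{linc}. You instead take \emph{arbitrary} maximal right rays $\eta$ and $\xi$, apply the rank-monotonicity Lemma~\ref{lapb} directly to $\phi|_{\spa(\gam)}$ to get $\eta^\gam_n\leq\rho_\del(x_n\phi)$, and absorb the misalignment between the image of $\eta$ and the chosen $\xi$ into the shift $k=i_0$, using the strict monotonicity of rank along a ray. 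This is a little cleaner, and it buys something concrete: it proves domination for every pair of maximal right rays at once, which is exactly the content of the paper's separate Lemma~\ref{lct5}; your argument would render that lemma redundant.
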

\begin{proof}
Suppose there is an rp-homomorphism $\phi$ from $\Gamma(\gam)$ to $\Gamma(\del)$.
Select a maximal right ray $\eta=[x_0\,x_1\,x_2\ldots\ran$ in $\gam$ (possible by Proposition~\ref{pcom}.)
Then $x_0\phi\ard x_1\phi\ard x_2\phi\ard\cdots$, and so, since $\del$ does not have any double rays,
there is $w\in\dom(\del)-\ima(\del)$ such that $w\del^k=x_0\phi$ for some $k\geq0$. Thus
\[
\xi=[y_0=w\,\,\,y_1=w\del\,\ldots\,y_{k-1}=w\del^{k-1}\,\,\,y_k=w\del^k=x_0\phi\,\,\,y_{k+1}=x_1\phi\,\,\,y_{k+2}=x_2\phi\,\ldots\ran
\]
is a maximal right ray in $\del$. For every $n\geq0$, the mapping $\phi|_{\downarrow x_n}$ is an rp-homomorphism
from $\Gamma(\gam_{x_n})$ to $\Gamma(\del_{y_{k+n}})$ (see Notation~\ref{ndax}).
Thus for every $n\geq0$, we have $\rho_{\gam_{x_n}}(x_n)\leq \rho_{\del_{y_{k+n}}}(y_{k+n})$
by Proposition~\ref{pcho}, and so $\rho(x_n)\leq \rho(y_{k+n})$ by Lemma~\ref{linc}. Hence $\lan \xi^\del_n\ran$ dominates $\lan \eta^\gam_n\ran$.

Conversely, suppose there are maximal right rays $\eta=[x_0\,x_1\,x_2\ldots\ran$ in $\gam$ and
$\xi=[y_0\,y_1\,y_2\ldots\ran$ in $\del$
such that $\lan \xi^\del_n\ran$ dominates $\lan \eta^\gam_n\ran$,
that is, there is $k\geq0$ such that $\xi^\del_{k+n}\geq\eta^\gam_n$ for every $n\geq0$. We define a collection $\{B_n\}_{n\geq0}$
of subsets of $\spa(\gam)$ by
\[
B_0=\{x_0\},\,\,\,B_n=\downarrow\!\!x_n-\downarrow\!\!x_{n-1}\mbox{ for $n\geq1$}.
\]
Since $\gam$ is connected, $\{B_n\}_{n\geq0}$ is a partition of $\spa(\gam)$.

We will now define an rp-homomorphism $\phi$ from $\Gamma(\gam)$ to $\Gamma(\del)$ by defining $\phi$ on $B_n$
for every $n\geq0$. First, we set $x_0\phi=y_k$. Let $n\geq1$. If $B_n=\{x_n\}$, we set $x_n\phi=y_{k+n}$.
Suppose $|B_n|\geq2$. Let $\gam_n=\gam|_{B_n\sm\{x_n\}}$ and $\del_n=\del_{y_{k+n}}$. Then $\gam_n$
and $\del_n$ are connected of type $\cho$ with roots $x_n$ and $y_{k+n}$, respectively.
By Lemma~\ref{linc},
\[
\rho_{\gam_n}(x_n)\leq \rho_\gam(x_n)=\eta^\gam_n\leq\xi^\del_{k+n}=\rho_\del(y_{k+n})=\rho_{\del_n}(y_{k+n}).
\]
Thus, by Proposition~\ref{pcho}, there is an rp-homomorphism $\phi_n$ from $\Gamma(\gam_n)$ to $\Gamma(\del_n)$.
Note that $x_n\phi_n=y_{k+n}$. We define $\phi$ on $B_n$ by $x\phi=x\phi_n$.

Suppose $x\arga z$. Then $z\in B_n$ for some $n\geq0$. If $x\in B_n$, then
$x\phi=x\phi_n\ard z\phi_n=z\phi$
since $\phi_n$ is an rp-homomorphism from $\Gamma(\gam_n)$ to $\Gamma(\del_n)$. If $x\notin B_n$, then
we must have $x=x_{n-1}$ and $z=x_n$, and so
$x\phi=x_{n-1}\phi=y_{k+n-1}\ard y_{k+n}=x_n\phi=z\phi$.

Hence, in all cases, if $x\arga z$ then $x\phi\ard z\phi$. Thus, since $\Gamma(\gam)$ does not have any terminal vertices,
$\phi$ is an rp-homomorphism from $\Gamma(\gam)$ to $\Gamma(\del)$.
\end{proof}

The following lemma will be needed in the next section.

\begin{lemma}\label{lct5}
Let $\gam,\del\in P(X)$ be of type $\rro$. Let $\eta$ be a maximal right ray in $\gam$ and
$\xi$ be a maximal right ray in $\del$ such that $\lan\xi^\del_n\ran$ dominates $\lan\eta^\gam_n\ran$.
Then for every maximal right ray $\eta_1$ in $\gam$ and every maximal right ray $\xi_1$ in $\del$
$\lan(\xi_1)^\del_n\ran$ dominates $\lan(\eta_1)^\gam_n\ran$.
\end{lemma}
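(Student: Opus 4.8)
The plan is to reduce the statement to two facts. The first is that the domination relation on sequences is transitive: if $c_{k_1+n}\geq b_n$ and $b_{k_2+n}\geq a_n$ hold for all $n\geq0$, then $c_{(k_1+k_2)+n}\geq b_{k_2+n}\geq a_n$, so $\lan c_n\ran$ dominates $\lan a_n\ran$. The second, which is the substantive point, is that inside one connected transformation of type $\rro$ the rank sequences coming from any two maximal right rays dominate each other. Granting both, I would chain the hypothesis with these mutual dominations to pass from the pair $(\eta,\xi)$ to the arbitrary pair $(\eta_1,\xi_1)$.

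For the substantive point, suppose $\gam$ is connected of type $\rro$ and let $\eta=[x_0\,x_1\,x_2\ldots\ran$ and $\eta_1=[x_0'\,x_1'\,x_2'\ldots\ran$ be maximal right rays in $\gam$. Since $x_0,x_0'\in\spa(\gam)$ and $\gam$ is connected, there are $k,m\geq0$ with $x_0\gam^k=x_0'\gam^m$, that is $x_k=x_m'$; applying $\gam^j$ gives $x_{k+j}=x_{m+j}'$ for all $j\geq0$, so the two rays share a common tail and therefore $\eta^\gam_{k+j}=(\eta_1)^\gam_{m+j}$ for every $j\geq0$. Moreover each rank sequence is strictly increasing, because along a right ray $(x_n,x_{n+1})\in R_\gam$, whence $\rho_\gam(x_{n+1})\geq\rho_\gam(x_n)+1$ by (\ref{ewel}). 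Combining tail-agreement with monotonicity and taking the common shift $K=k+m$, one computes $(\eta_1)^\gam_{K+n}=\eta^\gam_{2k+n}\geq\eta^\gam_n$ and, symmetrically, $\eta^\gam_{K+n}=(\eta_1)^\gam_{2m+n}\geq(\eta_1)^\gam_n$; hence $\lan\eta^\gam_n\ran$ and $\lan(\eta_1)^\gam_n\ran$ dominate each other. The identical argument applied in $\del$ shows that $\lan\xi^\del_n\ran$ and $\lan(\xi_1)^\del_n\ran$ dominate each other.

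With these in hand I would finish by transitivity: $\lan(\xi_1)^\del_n\ran$ dominates $\lan\xi^\del_n\ran$, which dominates $\lan\eta^\gam_n\ran$ by hypothesis, which in turn dominates $\lan(\eta_1)^\gam_n\ran$, so $\lan(\xi_1)^\del_n\ran$ dominates $\lan(\eta_1)^\gam_n\ran$, as required. I expect the only delicate point to be the bookkeeping with the one-sided shifts built into the definition of domination: since domination asks for an inequality on a shifted tail rather than exact equality, one must check that a single shift $K$ simultaneously absorbs the finitely many initial terms, and this is precisely where strict monotonicity of the rank sequences enters---it is what upgrades the mere eventual coincidence of two sequences to genuine mutual domination.
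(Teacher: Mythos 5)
Your proof is correct and uses essentially the same ingredients as the paper's: connectedness forces any two maximal right rays in $\gam$ (resp.\ $\del$) to share a common tail, ranks increase strictly along a ray, and a single shift absorbs the finitely many initial indices. The paper merely inlines these steps into one chained inequality rather than factoring them, as you do, into transitivity of domination plus mutual domination of the rank sequences within each of $\gam$ and $\del$.
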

\begin{proof}
Since $\lan\xi^\del_n\ran$ dominates $\lan\eta^\gam_n\ran$, there is an integer $k\geq0$ such that
\[
\xi^{\del}_{k+n}\geq\eta^\gam_n\mbox{ for every $n\geq0$.}
\]
Let $\eta=[x_0\,x_1\,x_2\ldots\ran$ and $\xi=[y_0\,y_1\,y_2\ldots\ran$.
Let $\eta_1=[w_0\,w_1\,w_2\ldots\ran$ and $\xi_1=[z_0\,z_1\,z_2\ldots\ran$ be arbitrary maximal
right rays in $\gam$ and $\del$, respectively.
Since $\gam$ and $\del$ are connected,
there are integers $l,q,m,p\geq0$ such that $x_l=x_0\gam^l=w_0\gam^q=w_q$ and
$y_m=y_0\del^m=z_0\del^p=z_p$. We may assume that
$m\geq k$.
Then for every $n\geq0$,
\begin{align}
(\xi_1)^\del_{(p+l)+n}&=\rho_\del(z_{p+(l+n)})=
\rho_\del(y_{m+(l+n)})\geq \rho_\del(y_{k+(l+n)})=\xi^\del_{k+(l+n)}\geq\eta^\gam_{l+n},\mbox{ and}\notag\\
\eta^\gam_{l+n}&=\rho_\gam(x_{l+n})=\rho_\gam(w_{q+n})\geq \rho_\gam(w_n)=(\eta_1)^\gam_n.\notag
\end{align}
Hence $\lan(\xi_1)^\del_n\ran$ dominates $\lan(\eta_1)^\gam_n\ran$.
\end{proof}

\section{Conjugacy in $P(X)$}\label{spx}
\setcounter{equation}{0}
\setcounter{figure}{0}

In this section we characterize the conjugacy $\con$ in the semigroup $P(X)$ of partial transformations on any
nonempty set $X$ (finite or infinite).

In $P(X)$ and, more generally, in any a constant rich subsemigroup $S$ of $P(X)$, 
the conjugacy relation $\con$ can be reformulated, as a consequence of Lemma~\ref{lgpa}(1), in the following way: 
given any $\al,\bt\in S$, we have
$\al\!\con\!\bt$ in $S$ if and only if there exist $\phi,\psi\in S^1$ such that $\al\phi=\phi\bt$ and $\bt\psi=\psi\al$, 
with $\dom(\al\phi)=\dom(\al)$ and $\dom(\bt\psi)=\dom(\bt)$. Notice that
the semigroup $P(X)$ can be regarded as a left restriction semigroup with respect to the set of partial 
identities $E=\{\id_Y:Y\subseteq X\}$ (see \cite{Hol09} for a survey). 
Hence $P(X)$ is equipped with a unary operation $^+$ assigning to any 
$\al\in P(X)$ the element $\al^+=\id_{\dom(\al)}$. Any subsemigroup $S$ of $P(X)$ closed under $^+$  is called a \emph{left restriction} semigroup. 
If $S$ is a left restriction semigroup that is also constant rich, then for all $\al,\bt\in S$,
\[\al\!\con\!\bt\ \Leftrightarrow\ \exists\phi,\psi\in S^1: \al\phi=\phi\bt\ {\rm and}\ \bt\psi=\psi\al, 
                                     {\rm with}\ (\al\phi)^+=\al^+\ {\rm and}\ (\bt\psi)^+=\bt^+. 
\]

We now proceed to characterize the conjugacy relation $\con$ in $P(X)$ in terms of the basic partial transformations.

\begin{defi}\label{dsac}
{\rm
Let $M$ be a nonempty subset of the set $\mathbb Z_+$ of positive integers. Then $M$ is
partially ordered by the relation $\mid$ (divides).
Order the elements of $M$ according to the usual ``less than'' relation:
$m_1<m_2<m_3<\ldots$. We define a subset $\sac(M)$ of $M$ as follows: for every integer $n$, $1\leq n<|M|+1$,
\[
m_n\in\sac(M)\miff (\forall_{i<n})\mbox{$m_n$ is not a multiple of $m_i$}.
\]
The set $\sac(M)$ is a maximal antichain of the poset $(M,\,\mid)$. We will call $\sac(M)$ the \emph{standard antichain of $M$}.
}
\end{defi}

For example, if $M=\{4,6,8,10,18\}$ then $\sac(M)=\{4,6,10\}$; if $M=\{1,2,4,8,16,32,\ldots\}$ then $\sac(M)=\{1\}$.

\begin{defi}\label{dcs}
{\rm
Let $\al\in P(X)$ such that $\al$ contains a cycle. Let
\[
M=\{n\in\mathbb Z_+:(\exists_{x\in\dom(\al)})\ x\al^n =x\mbox{ and $x\al^i\ne x$ for every $i, 1\leq i<n$}\}.
\]
Note that $M$ is the set of the lengths of cycles in $\al$.
The standard antichain of $(M,\,\mid)$ will be called the \emph{cycle set} of $\al$ and denoted by $\cs(\al)$.
We agree that $\cs(\al)=\emptyset$ if $\al$ has no cycles.
}
\end{defi}

\begin{theorem}\label{tpcha}
Let $\al,\bt\in P(X)$. Then $\al\!\con\!\bt$ in $P(X)$ if and only if $\al=\bt=0$
or $\al,\bt\ne0$ and the following conditions are satisfied:
\begin{itemize}
  \item [\rm(1)] $\cs(\al)=\cs(\bt)$;
  \item [\rm(2)] $\al$ has a double ray but not a cycle $\miff$ $\bt$ has a double ray but not a cycle;
  \item [\rm(3a)] if $\al$ has a connected component $\gam$ of type $\rro$, but no cycles or double rays, then
$\bt$ has a connected component $\del$ of type $\rro$, but no cycles or double rays, and
$\lan \xi^\del_n\ran$ dominates $\lan \eta^\gam_n\ran$
for some maximal right rays $\eta$ in $\gam$ and $\xi$ in $\del$;
  \item [\rm(3b)] if $\bt$ has a connected component $\del$ of type $\rro$, but no cycles or double rays, then
$\al$ has a connected component $\gam$ of type $\rro$, but no cycles or double rays, and
$\lan \eta^\gam_n\ran$ dominates $\lan \xi^\del_n\ran$
for some maximal right rays $\xi$ in $\del$ and $\eta$ in $\gam$;
  \item [\rm(4)] $\al$ has a maximal left ray $\miff$ $\bt$ has a maximal left ray;
  \item [\rm(5a)] if $\al$ has a connected component $\gam$ of type $\cho$ with root $x_0$, but no maximal left rays, then
$\bt$ has a connected component $\del$ of type $\cho$ with root $y_0$, but no maximal left rays, and  $\rho_\gam(x_0)\leq \rho_\del(y_0)$;
  \item [\rm(5b)] if $\bt$ has a connected component $\del$ of type $\cho$ with root $y_0$, but no maximal left rays, then
$\al$ has a connected component $\gam$ of type $\cho$ with root $x_0$, but no maximal left rays, and  $\rho_\del(y_0)\leq \rho_\gam(x_0)$.
\end{itemize}
\end{theorem}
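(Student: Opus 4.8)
The plan is to combine Theorem~\ref{tconrph} with the structural results of Section~\ref{scpt}, reducing the conjugacy $\con$ in $P(X)$ to a purely combinatorial comparison of the connected components of $\al$ and $\bt$. First I would dispose of the zero case: if $\al=0$ then $[0]_c=\{0\}$ by Lemma~\ref{lccz}, so $\al\con\bt$ forces $\bt=0$, which is the branch ``$\al=\bt=0$.'' Assume now $\al,\bt\ne0$. Since $P(X)$ is constant rich and is a monoid with identity $\id_X$ (so $P(X)^1=P(X)$), Theorem~\ref{tconrph} says that $\al\con\bt$ if and only if $\Gamma(\al)$ is rp-homomorphic to $\Gamma(\bt)$ and $\Gamma(\bt)$ is rp-homomorphic to $\Gamma(\al)$. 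By Lemma~\ref{lspa} and Remark~\ref{riso} any such rp-homomorphism may be taken with domain exactly the span (its non-isolated vertices), so Proposition~\ref{pspl} applies; moreover in $P(X)$ the side condition~(b) of Proposition~\ref{pspl} is automatic, since the join of the component maps is again a partial transformation and hence lies in $P(X)=P(X)^1$. Thus $\al\con\bt$ is equivalent to the conjunction of (I) \emph{every connected component of $\al$ is rp-homomorphic to some connected component of $\bt$} and (II) its mirror image with $\al$ and $\bt$ interchanged. The remainder of the proof is to show that (I) and (II) together are equivalent to (1)--(5b).

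The next step is to tabulate, for the five types of connected components from Remark~\ref{rtyp} (cycle, double ray, type~$\rro$, maximal left ray, type~$\cho$), which can be rp-homomorphic to which. By Definition~\ref{drph}(b) the image of a terminal vertex is terminal, and forward orbits map to forward walks; since cycles, double rays, and type~$\rro$ components have no terminal vertices and only infinite forward walks, while maximal-left-ray and type~$\cho$ components have a unique root and only finite forward walks, no component of the first group can be rp-homomorphic to one of the second group, nor conversely. Within the first group, Proposition~\ref{pcyc} gives a cycle rp-homomorphic to a component exactly when the latter is a cycle of length dividing $k$; Lemmas~\ref{lcyc} and~\ref{ldch} give that a double ray or a type~$\rro$ component is rp-homomorphic to any cycle and to any double ray; Proposition~\ref{prro} gives type~$\rro$ to type~$\rro$ exactly under domination; and a double ray cannot be rp-homomorphic to a type~$\rro$ component, since its bi-infinite path would force a cycle or a double ray in the image. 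Within the second group, Lemma~\ref{llra} gives that a maximal left ray or a type~$\cho$ component is rp-homomorphic to any maximal left ray, Proposition~\ref{pcho} gives type~$\cho$ to type~$\cho$ exactly under the rank inequality, and Lemma~\ref{lrronotlc} together with the root and left-ray arguments excludes the remaining cases.

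Finally I would translate (I) and (II) family by family. For cycles, (I) and (II) assert that every cycle length of $\al$ is a multiple of some cycle length of $\bt$ and conversely; since $\sac(M)$ is exactly the set of $\mid$-minimal elements of $M$, a short argument shows this mutual divisibility is equivalent to the two standard antichains coinciding, that is, to $\cs(\al)=\cs(\bt)$, which is~(1). The double rays, which can only be rp-homomorphic to a cycle or a double ray, give under~(1) exactly condition~(2), and the maximal left rays, which can only be rp-homomorphic to a maximal left ray, give condition~(4). The remaining components can ``escape'' to a lower type precisely when $\al$ (equivalently, by~(1) and~(2), $\bt$) has a cycle or a double ray, or, for type~$\cho$, a maximal left ray; when no such escape exists, each type~$\rro$ component of $\al$ must be rp-homomorphic to a type~$\rro$ component of $\bt$ under domination, yielding~(3a), and symmetrically~(3b) (both well defined independently of the chosen maximal right rays by Lemma~\ref{lct5}), while each type~$\cho$ component of $\al$ must be rp-homomorphic to a type~$\cho$ component of $\bt$ under the rank inequality, yielding~(5a) and symmetrically~(5b). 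Assembling the five families gives the equivalence of (I) and (II) with (1)--(5b).

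The main obstacle is the escape-route bookkeeping in this last step: one must verify that the \emph{conditional} form of (3a), (3b), (5a), (5b)---carrying the hypotheses ``$\al$ has no cycles or double rays'' or ``$\al$ has no maximal left rays''---captures exactly the content of (I) and (II), using (1), (2), and (4) to see that these hypotheses hold for $\al$ if and only if they hold for $\bt$, so that the conditions are vacuous precisely when the corresponding components can escape to a lower type. A secondary point requiring care is the equivalence between mutual divisibility of the two cycle-length sets and equality of their standard antichains.
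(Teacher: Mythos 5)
Your proposal is correct and follows essentially the same route as the paper: both reduce $\al\con\bt$ to the existence of mutual rp-homomorphisms via Theorem~\ref{tconrph}, pass to connected components via Proposition~\ref{pspl} (with Remark~\ref{riso} normalizing the domain to the span), and then invoke the same battery of results (Proposition~\ref{pcyc}, Lemmas~\ref{lcyc}, \ref{ldch}, \ref{llra}, \ref{lrronotlc}, and Propositions~\ref{prro}, \ref{pcho}) to classify which component types can be rp-homomorphic to which. Your ``table of admissible targets'' is only a cosmetic reorganization of the paper's case analysis, and your treatment of the antichain equivalence and of the conditional ``escape'' clauses in (3a)--(5b) matches the paper's argument.
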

\begin{proof}
Suppose $\al\!\con\!\bt$. Then, since $[0]_{\con}=\{0\}$ in every semigroup with $0$,
either $\al=\bt=0$ or $\al,\bt\ne0$. Suppose $\al,\bt\ne0$. Then,
by Theorem~\ref{tconrph}, there is an rp-homomorphism $\phi$ from $\Gamma(\al)$ to $\Gamma(\bt)$.
We may assume that $\dom(\phi)=\spa(\al)$ (see Remark~\ref{riso}).

Suppose $\al$ has a cycle.
Then, by Lemma~\ref{lcom1}, $\bt$ also has a cycle.
Let $n\in\cs(\al)$. Then $\al$ has a cycle of length $n$, and so $\bt$ has a cycle of length $m$ such that $m\,|\, n$.
By the definition of $\cs(\bt)$, there is $m_1\in\cs(\bt)$ such that $m_1\,|\, m$. Thus $\bt$ has a cycle of length $m_1$, and so
$\al$ has a cycle of length $n_1$ such that $n_1\,|\, m_1$, so $n_1\,|\, m_1\,|\, m\,|\, n$. Since $\cs(\al)$ is an antichain,
$n_1\,|\, n$ and $n\in\cs(\al)$ implies $n_1=n$. Thus $n=m_1$, and so $n\in\cs(\bt)$. We have proved that $\cs(\al)\subseteq\cs(\bt)$.
Similarly, $\cs(\bt)\subseteq\cs(\al)$, and so $\cs(\al)=\cs(\bt)$.
By symmetry, if $\bt$ has a cycle, then
$\al$ also has a cycle and $\cs(\bt)=\cs(\al)$. If neither $\al$ nor $\bt$ has a cycle, then $\cs(\al)=\cs(\bt)=\emptyset$.
We have proved (1).

Suppose $\al$ has a double ray, say $\lan\ldots\,x_{-1}\,x_0\,x_1\ldots\ran$, but no cycles. Then
$\bt$ does not have a cycle either by Lemma~\ref{lcom1}, and
$\ldots\arb x_{-1}\phi\arb x_0\phi\arb x_1\phi\arb\ldots$,
where $\phi$ is an rp-homomorphism from $\Gamma(\al)$ to $\Gamma(\bt)$.
The elements $\ldots,x_{-1}\phi,x_0\phi,x_1\phi,\ldots$ are pairwise disjoint (since otherwise
$\bt$ would have a cycle), and so
$\lan\ldots\,x_{-1}\phi\,\,x_0\phi\,\,x_1\phi\ldots\ran$ is a double ray in~$\bt$.
The converse is true by symmetry. This proves (2).

Suppose that $\al$ has a connected component $\gam$ of type $\rro$, but neither a cycle nor a double ray.
By Proposition~\ref{pspl},
there is a connected component $\del$ of $\bt$ such that $\Gamma(\gam)$ is rp-homomorphic to $\Gamma(\del)$. By (1) and (2),
$\del$ does not have a cycle or a double ray. By Lemma~\ref{lrronotlc}, $\del$
does not have a maximal left ray and it is not of type $\cho$. Hence $\del$ has type $\rro$.
By Proposition~\ref{prro}, there are maximal right rays $\eta$ in $\gam$ and $\xi$ in $\del$
such that $\lan \xi^\del_n\ran$ dominates $\lan \eta^\gam_n\ran$.
We have proved (3a). Condition (3b) holds by symmetry.

Suppose $\al$ has a maximal left ray, say $\lan\ldots\,x_2\,x_1\,x_0]$. Then
$\ldots\arb x_2\phi\arb x_1\phi\arb x_0\phi$
and $x_0\phi$ is a terminal vertex in $\Gamma(\bt)$, which implies that $\lan\ldots\,x_2\phi\,x_1\phi\,x_0\phi]$
is a maximal left ray in $\bt$. The converse is true by symmetry. This proves (4).

Suppose $\al$ has a connected component $\gam$ of type $\cho$ with root $x_0$, but not a maximal left ray.
By Proposition~\ref{pspl} and its proof,
there is a connected component $\del$ of $\bt$ such that $\phi_\gam=\phi|_{\spa(\gam)}$ is an
rp-homomorphism from $\Gamma(\gam)$ to $\Gamma(\del)$.
Since $x_0$ is a terminal vertex in $\gam$,
$y_0=x_0\phi_\gam$ is a terminal vertex in $\del$. Since $\bt$ has no maximal left ray (by (3)),
$\del$ is of type $\cho$ and $y_0$ is the root of $\del$.
By Proposition~\ref{pcho}, $\rho_\gam(x_0)\leq \rho_\del(y_0)$.
We have proved (5a). Condition (5b) holds by symmetry.

Conversely, if $\al=\bt=0$ then $\al\con\bt$. Suppose that $\al,\bt\ne0$ and that (1)--(5b) hold.
Let $\gam$ be a connected component of $\al$. We will
prove that $\Gamma(\gam)$ is rp-homomorphic to $\Gamma(\del)$ for some connected component $\del$ of $\bt$.

Suppose $\gam$ has a cycle of length $k$. Since, by (1), $\cs(\al)=\cs(\bt)$, $\bt$ has a cycle $\vth$ of length
$m$ such that $m\,|\, k$. Let $\del$ be the connected component of $\bt$ containing $\vth$. Then $\Gamma(\gam)$ is rp-homomorphic
to $\Gamma(\del)$ by Proposition~\ref{pcyc}.

Suppose $\gam$ has a double ray. If some connected component $\del$ of $\bt$ has a cycle, then
$\Gamma(\gam)$ is rp-homomorphic to $\Gamma(\del)$ by Lemma~\ref{lcyc}. Suppose $\bt$ does not have a cycle.
Then, by (1) and (2), both $\al$ and $\bt$ have a double ray but not a cycle. Let $\del$ be a connected component
of $\bt$ containing a double ray. Then $\Gamma(\gam)$ is rp-homomorphic to $\Gamma(\del)$ by Lemma~\ref{ldch}.

Suppose $\gam$ is of type $\rro$. If $\bt$ has some connected component $\del$ with a cycle or a double ray, then
$\Gamma(\gam)$ is rp-homomorphic to $\Gamma(\del)$ by Lemmas~\ref{lcyc} and~\ref{ldch}. Suppose $\bt$ does not have a cycle
or a double ray. Then, by (3a),
there is a connected component $\del$ in $\bt$ of type $\rro$ such that
$\lan \xi^\del_n\ran$ dominates $\lan \eta^\gam_n\ran$
for some maximal right rays $\eta$ in $\gam$ and $\xi$ in $\del$. Hence $\Gamma(\gam)$
is rp-homomorphic to $\Gamma(\del)$ by Proposition~\ref{prro}.

Suppose $\gam$ has a maximal left ray.
Then, by (4), some connected component $\del$
of $\bt$ has a maximal left ray. Then $\Gamma(\gam)$ is rp-homomorphic to $\Gamma(\del)$ by Lemma~\ref{llra}.

Suppose $\gam$ is of type $\cho$ with root $x_0$. If $\bt$ has some connected component $\del$ with a maximal left ray, then
$\Gamma(\gam)$ is rp-homomorphic to $\Gamma(\del)$ by Lemma~\ref{llra}. Suppose $\bt$ does not have
a maximal left ray. Then, by (4), $\al$ does not have a maximal left ray, and so, by (5a),
there is a connected component $\del$ in $\bt$ of type $\cho$ with root $y_0$ such that $\rho_\gam(x_0)\leq \rho_\del(y_0)$.
Hence $\Gamma(\gam)$
is rp-homomorphic to $\Gamma(\del)$ by Proposition~\ref{pcho}.

We have proved that for every connected component $\gam$ of $\al$, there exists a connected component $\del$ of $\bt$
and an rp-homomorphism $\phi_\gam\in P(X)$ from $\Gamma(\gam)$ to $\Gamma(\del)$. We may assume
that for every $\gam\in C(\al)$, $\dom(\phi_\gam)=\spa(\gam)$.
Hence $\Gamma(\al)$ is rp-homomorphic to $\Gamma(\bt)$ by Proposition~\ref{pspl}. By symmetry,
$\Gamma(\bt)$ is rp-homomorphic to $\Gamma(\al)$, and so $\al\con\bt$ by Theorem~\ref{tconrph}.
\end{proof}

\begin{example}\label{epcha}
{\rm
Let $X$ be an infinite set containing $x_0,y_1,y_2,y_3,\ldots$ and let $\al,\bt\in P(X)$ be the partial
transformations whose digraphs are presented in Figure~\ref{f71}. Then $\al$ is connected of type $\cho$ with root $x_0$,
and $\bt=\del_1\jo\del_2\jo\del_3\jo\del_4\jo\cdots$, where $\del_i$ is a chain with root $y_i$.
We have $\rho_\gam(x_0)=\ome$, where $\gam=\al$, and for every integer $i\geq1$, $\rho_{\del_i}(y_i)=i$. Hence
$\al$ and $\bt$ are not conjugate by (5a) of Theorem~\ref{tpcha}.
}
\end{example}

\begin{figure}[h]
\[
\xy
(10,42)*{x_{0}}="g";
(10,0)*{\Gamma(\alpha)}="g";
(32.5,20)*{\ldots}="h";
(0,30)*{\bullet}="1";
(10,20)*{\bullet}="2";
(20,10)*{\bullet}="5";
(40,0)*{\bullet}="8";
(10,40)*{\bullet}="4";
(10,30)*{\bullet}="3";
(13,30)*{\bullet}="7";
(16.5,20)*{\bullet}="6";
(32.5,10)*{\bullet}="9";
(25,20)*{\bullet}="10";
(17.5,30)*{\bullet}="11";
"1";"4" **\crv{} ?>* \dir{};
"2";"4" **\crv{} ?>* \dir{};
"5";"4" **\crv{} ?>* \dir{};
"8";"4" **\crv{} ?>* \dir{};
(50,43)*{y_{1}}="g";
(60,43)*{y_{2}}="g";
(70,43)*{y_{3}}="g";
(80,43)*{y_{4}}="g";
(70,0)*{\Gamma(\beta)}="g";
(85,20)*{\ldots}="h";
(50,30)*{\bullet}="12";
(50,40)*{\bullet}="13";
(60,20)*{\bullet}="14";
(60,30)*{\bullet}="15";
(60,40)*{\bullet}="16";
(70,10)*{\bullet}="17";
(70,20)*{\bullet}="18";
(70,30)*{\bullet}="19";
(70,40)*{\bullet}="20";
(80,0)*{\bullet}="21";
(80,10)*{\bullet}="22";
(80,20)*{\bullet}="23";
(80,30)*{\bullet}="24";
(80,40)*{\bullet}="25";
(85,20)*{\ldots}="111";
"12";"13" **\crv{} ?>* \dir{};
"14";"16" **\crv{} ?>* \dir{};
"17";"20" **\crv{} ?>* \dir{};
"21";"25" **\crv{} ?>* \dir{};
\endxy
\]
\caption{The digraphs of $\al$ and $\bt$ from Example~\ref{epcha}.}\label{f71}
\end{figure}

\begin{defi}\label{dsal}
{\rm
For $\al\in P(X)$, we define
\[
\sa=\sup\{\rho_\gam(x_0):\mbox{$\gam$ is a connected component of $\al$ of type $\cho$ with root $x_0$}\},
\]
where we agree that $\sa=0$ if $\al$ has no connected component of type $\cho$.
}
\end{defi}

Suppose $\al,\bt\in P(X)$ have a connected component of type $\cho$, but no cycles or rays.
Then, by Theorem~\ref{tpcha}, if $\al\!\con\!\bt$ then $\sa=\sbt$. However,
the converse is not true. Indeed, consider $\al,\bt\in P(X)$ from Example~\ref{epcha} (see Figure~\ref{f71}).
Then $\al$ is connected of type $\cho$ with the root of order $\omega$,
and $\bt$ is a join of connected components of type $\cho$ (chains) whose roots have orders $1,2,3,4,\ldots$.
Thus $\sa=\sbt=\omega$,
but $(\al,\bt)\notin\,\con$ by (5a) of Theorem~\ref{tpcha}. However, if $X$ is finite and $\al,\bt\in P(X)$ have no cycles,
then $\sa=\sbt$ does imply $\al\!\con\!\bt$.

The transformations of a finite $P(X)$ have no rays.
Hence, Theorem~\ref{tpcha} gives us the following corollary.

\begin{cor}\label{cpcha}
Let $X$ be finite, and let $\al,\bt\in P(X)$. Then $\al\!\con\!\bt$ if and only if $\cs(\al)=\cs(\bt)$ and $\sa=\sbt$.
\end{cor}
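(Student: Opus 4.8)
The plan is to derive Corollary~\ref{cpcha} directly from Theorem~\ref{tpcha} by observing how the six conditions of that theorem collapse when $X$ is finite. The key structural fact is that a finite set $X$ admits no rays: a right ray, left ray, or double ray requires infinitely many distinct vertices in its span, which is impossible in a finite $X$. Thus for $\al,\bt\in P(X)$ with $X$ finite, every connected component is either a component with a cycle (possibly with finite chains feeding into it) or a component of type $\cho$ (a finite tree-like structure with a single terminal vertex, its root). In particular, conditions (2), (3a), (3b), and (4) of Theorem~\ref{tpcha}, all of which have hypotheses referring to double rays, type-$\rro$ components, or maximal left rays, are \emph{vacuously} satisfied on both sides because neither $\al$ nor $\bt$ can contain any such basic transformation.

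The first step I would carry out is to record this vacuity explicitly: since $X$ is finite, no $\gam\in P(X)$ has a double ray, a right ray, or a left ray, so the antecedents in (2), (3a), (3b), (4), (5a-left ray clause), and (5b-left ray clause) are never triggered. What survives from Theorem~\ref{tpcha} is therefore exactly condition (1), namely $\cs(\al)=\cs(\bt)$, together with the residue of (5a) and (5b). The second step is to analyze that residue. When $X$ is finite and $\al,\bt$ have no maximal left rays (automatic here), conditions (5a) and (5b) together say: for every connected component $\gam$ of $\al$ of type $\cho$ with root $x_0$ there is a component $\del$ of $\bt$ of type $\cho$ with root $y_0$ satisfying $\rho_\gam(x_0)\leq\rho_\del(y_0)$, and symmetrically with the roles of $\al$ and $\bt$ reversed.

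The main step, and the one I expect to require the most care, is translating this mutual-domination condition between the families of $\cho$-roots into the single equality $\sa=\sbt$. Here I would use that for a finite $X$ every $\cho$-component has a root whose rank $\rho(x_0)$ is a finite ordinal (a nonnegative integer, in fact the length of the longest chain terminating at the root), so the suprema defining $\sa$ and $\sbt$ are suprema of finite sets of integers and are \emph{attained}. Assuming $\cs(\al)=\cs(\bt)$, I would argue that conditions (5a)/(5b) hold if and only if $\sa=\sbt$. For the forward direction, picking a $\cho$-component of $\al$ whose root attains $\sa$ and applying (5a) yields a $\cho$-component of $\bt$ whose root has rank at least $\sa$, so $\sbt\geq\sa$; symmetrically $\sa\geq\sbt$, giving equality. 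For the converse, if $\sa=\sbt$ then for any $\cho$-component of $\al$ with root of rank $r\leq\sa=\sbt$ one can select a $\cho$-component of $\bt$ whose root attains $\sbt\geq r$, verifying (5a), and symmetrically (5b); the attainment of the supremum (finiteness of $X$) is exactly what makes such a witnessing component available, and this is where the argument would fail for infinite $X$, as Example~\ref{epcha} shows.

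Combining the three steps, $\al\con\bt$ in a finite $P(X)$ is equivalent to the conjunction of $\cs(\al)=\cs(\bt)$ and the surviving (5a)/(5b), which by the rank analysis is equivalent to $\cs(\al)=\cs(\bt)$ together with $\sa=\sbt$, as required. The only genuine subtlety is the attainment of suprema; everything else is a matter of checking that the ray-dependent clauses of Theorem~\ref{tpcha} are vacuous, which follows immediately from finiteness of $X$.
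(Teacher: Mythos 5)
Your proposal is correct and follows essentially the same route as the paper, which derives the corollary from Theorem~\ref{tpcha} by noting that transformations of a finite $P(X)$ have no rays; you simply spell out the details the paper leaves implicit, in particular the attainment of the suprema $\sa$ and $\sbt$ (which the paper only hints at in the discussion of Example~\ref{epcha} preceding the corollary). No gaps.
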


\begin{example}\label{ecpcha}
{\rm
Let $\al$ and $\bt$ be partial transformations whose digraphs are presented in Figures~\ref{f72} and~\ref{f73},
respectively. Then $\cs(\al)=\cs(\bt)=\{2,3\}$ and $\sa=\sbt=3$. Thus
 $\al\!\con\!\bt$ by Corollary~\ref{cpcha}.
}
\end{example}

\begin{figure}[h]
\[
\xy
(5,25)*{}*\cir<14pt>{};
(0,0)*{\bullet}="1";
(0,10)*{\bullet}="2";
(10,10)*{\bullet}="3";
(5,20)*{\bullet}="4";
(5,30)*{\bullet}="5";
"1";"2" **\crv{} ?>* \dir{};
"2";"4" **\crv{} ?>* \dir{};
"3";"4" **\crv{} ?>* \dir{};
(25,25)*{}*\cir<14pt>{};
(21.5,21.5)*{\bullet}="6";
(25,30)*{\bullet}="7";
(28,21.2)*{\bullet}="8";
(45,25)*{}*\cir<14pt>{};
(40,0)*{\bullet}="9";
(40,10)*{\bullet}="10";
(40,25)*{\bullet}="11";
(50,25)*{\bullet}="13";
(45,30)*{\bullet}="12";
(45,20)*{\bullet}="14";
(45,10)*{\bullet}="15";
(55,10)*{\bullet}="16";
"9";"10" **\crv{} ?>* \dir{};
"10";"11" **\crv{} ?>* \dir{};
"15";"14" **\crv{} ?>* \dir{};
"16";"14" **\crv{} ?>* \dir{};
(65,10)*{\bullet}="18";
(70,15)*{\bullet}="19";
(70,20)*{\bullet}="20";
(75,30)*{\bullet}="21";
(80,20)*{\bullet}="22";
(75,20)*{\bullet}="23";
(75,10)*{\bullet}="24";
"18";"19" **\crv{} ?>* \dir{};
"24";"19" **\crv{} ?>* \dir{};
"19";"20" **\crv{} ?>* \dir{};
"20";"21" **\crv{} ?>* \dir{};
"23";"21" **\crv{} ?>* \dir{};
"22";"21" **\crv{} ?>* \dir{};
\endxy
\]
\caption{The digraph of $\al$ from Example~\ref{ecpcha}.}\label{f72}
\end{figure}

\begin{figure}[h]
\[
\xy
(5,25)*{}*\cir<14pt>{};
(5,10)*{\bullet}="1";
(5,20)*{\bullet}="2";
(5,30)*{\bullet}="3";
"1";"2" **\crv{} ?>* \dir{};
(25,25)*{}*\cir<14pt>{};
(20,0)*{\bullet}="4";
(30,0)*{\bullet}="5";
(25,10)*{\bullet}="6";
(20,25)*{\bullet}="7";
(30,25)*{\bullet}="8";
(25,20)*{\bullet}="9";
"4";"6" **\crv{} ?>* \dir{};
"5";"6" **\crv{} ?>* \dir{};
"6";"9" **\crv{} ?>* \dir{};
(45,25)*{}*\cir<14pt>{};
(49.5,28)*{\bullet}="9";
(49.5,23)*{\bullet}="10";
(40.5,28)*{\bullet}="11";
(40.5,23)*{\bullet}="13";
(45,30)*{\bullet}="12";
(45,20)*{\bullet}="14";
(60,20)*{\bullet}="15";
(60,30)*{\bullet}="16";
"15";"16" **\crv{} ?>* \dir{};
(70,0)*{\bullet}="17";
(70,10)*{\bullet}="18";
(70,20)*{\bullet}="19";
(70,30)*{\bullet}="20";
(80,10)*{\bullet}="21";
"17";"18" **\crv{} ?>* \dir{};
"18";"19" **\crv{} ?>* \dir{};
"19";"20" **\crv{} ?>* \dir{};
"21";"19" **\crv{} ?>* \dir{};
\endxy
\]
\caption{The digraph of $\bt$ from Example~\ref{ecpcha}.}\label{f73}
\end{figure}

Using Theorem~\ref{tpcha}, we will count the conjugacy classes in $P(X)$ for an infinite set $X$ (Theorem~\ref{tpcc}).
We will use the aleph notation for the infinite cardinals, that is, for an ordinal $\vep$, we will write
$\aep$ for the cardinal indexed by $\vep$. If $\aep$ is viewed as an ordinal, we will consistently write $\oep$.
This is important because we will need to distinguish between ordinal and cardinal arithmetic. For example,
$\ome_0<\ome_0+1$ (ordinal arithmetic) but $\ale_0=\ale_0+1$ (cardinal arithmetic). It will be always clear
from the context which arithmetic is used.

A cardinal $\aep$ is called \emph{singular} if there is a limit ordinal $\vth<\oep$ and there is an
increasing transfinite sequence $\lan\lam_\nu\ran_{\nu<\vth}$ of ordinals $\lam_\nu<\oep$ such that
$\oep=\sup\{\lam_\nu:\nu<\vth\}$ \cite[page~160, Definition~2.1]{HrJe99}. (As in \cite{HrJe99},
``increasing'' means ``strictly increasing.'') If $\aep$ is not singular,
then it is called \emph{regular}.

For any cardinal $\aep$, the cardinal $\aepp$ is called the \emph{successor} cardinal of $\aep$.
Every successor cardinal is regular  \cite[page~162, Theorem~2.4]{HrJe99}. The following lemma
follows immediately from this fact and the definition of a regular cardinal.

\begin{lemma}\label{lst}
Let $\aepp$ be a successor cardinal and let $A$ be a set of ordinals such that $|A|<\aepp$
and $\lam<\oepp$ for every $\lam\in A$. Then $\sup\{\lam:\lam\in A\}<\oepp$.
\end{lemma}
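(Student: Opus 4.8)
The plan is to argue by contradiction, deriving from a too-large supremum an explicit witness that the successor cardinal $\aepp$ is singular, which is impossible by the fact cited just above that every successor cardinal is regular.

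First I would dispose of the trivial cases. Since every $\lam\in A$ satisfies $\lam<\oepp$, we always have $\sup\{\lam:\lam\in A\}\le\oepp$, so it suffices to rule out equality. If $A$ is empty or has a largest element, then $\sup\{\lam:\lam\in A\}$ is either $0$ or that largest element, hence $<\oepp$; so I may assume $A$ has no maximum, which forces $A$ to be infinite and forces $\sup\{\lam:\lam\in A\}$ to be a limit ordinal. Suppose, toward a contradiction, that $\sup\{\lam:\lam\in A\}=\oepp$.

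Next I would extract a canonical increasing cofinal sequence from $A$. Let $\vth$ be the order type of $A$ (viewed as a set of ordinals) and let $\lan c_\nu\ran_{\nu<\vth}$ be the strictly increasing enumeration of $A$; then $\sup\{c_\nu:\nu<\vth\}=\sup\{\lam:\lam\in A\}=\oepp$ and each $c_\nu<\oepp$. Because $A$ has no maximum, $\vth$ is a limit ordinal. The key point is that $\vth<\oepp$: since the enumeration is a bijection, $|\vth|=|A|<\aepp$, and $\oepp$ is the least ordinal of cardinality $\aepp$, so any ordinal of cardinality strictly below $\aepp$ is strictly below $\oepp$; applying this to $\vth$ gives $\vth<\oepp$.

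Finally I would close the contradiction. The sequence $\lan c_\nu\ran_{\nu<\vth}$ is an increasing transfinite sequence of ordinals $c_\nu<\oepp$ indexed by a limit ordinal $\vth<\oepp$ with $\sup\{c_\nu:\nu<\vth\}=\oepp$; this is exactly the configuration in the definition of a singular cardinal, so $\aepp$ would be singular, contradicting that successor cardinals are regular. Hence $\sup\{\lam:\lam\in A\}\neq\oepp$, and therefore $\sup\{\lam:\lam\in A\}<\oepp$, as required. The main obstacle---really the only subtlety---is the cardinality bookkeeping in the previous step: one must invoke that $\oepp$ is the initial ordinal of $\aepp$ in order to convert the given cardinality bound $|A|<\aepp$ into the ordinal bound $\vth<\oepp$ demanded by the definition of singularity.
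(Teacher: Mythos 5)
Your proof is correct and is exactly the argument the paper has in mind: the paper gives no explicit proof, stating only that the lemma ``follows immediately'' from the regularity of successor cardinals and the definition of a regular (i.e.\ non-singular) cardinal, and your write-up -- extracting the increasing enumeration of $A$ of limit order type $\vth<\oepp$ and noting that a supremum equal to $\oepp$ would witness singularity -- is precisely that immediate argument made explicit. The one subtlety you flag, converting $|A|<\aepp$ into $\vth<\oepp$ via the fact that $\oepp$ is the initial ordinal of cardinality $\aepp$, is handled correctly.
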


To prove the counting theorem, we need a series of lemmas.

\begin{lemma}\label{lct1}
Let $|X|=\aep$ and let $\gam\in P(X)$ be of type $\cho$ with root $x_0$. Then $\rho(x_0)<\oepp$.
\end{lemma}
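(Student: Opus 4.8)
The plan is to prove the apparently stronger statement that $\rho(x)<\oepp$ for \emph{every} $x\in\spa(\gam)$; the lemma is then the special case $x=x_0$. The two ingredients are the recursive description of rank in~(\ref{ewel}) and the regularity of the successor cardinal $\aepp$ packaged in Lemma~\ref{lst}. I would argue by well-founded induction on $R_\gam$, exactly as in the proof of Lemma~\ref{linc}.

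First I would record that, by Definition~\ref{dord}, the $R_\gam$-predecessors of a vertex $x$ are precisely the elements of $x\gam^{-1}$, so that~(\ref{ewel}) reads
\[
\rho(x)=\sup\{\rho(y)+1:y\in x\gam^{-1}\}.
\]
Fix $x\in\spa(\gam)$ and take as inductive hypothesis that $\rho(y)<\oepp$ for all $y\in x\gam^{-1}$. When $x$ is $R_\gam$-minimal we have $x\gam^{-1}=\emptyset$ and $\rho(x)=0<\oepp$, so that case is immediate.

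The crux is the inductive step, where I would apply Lemma~\ref{lst} to $A=\{\rho(y)+1:y\in x\gam^{-1}\}$. Since $x\gam^{-1}\subseteq\spa(\gam)\subseteq X$ and $|X|=\aep$, we have $|A|\leq|x\gam^{-1}|\leq\aep<\aepp$. Each member of $A$ has the form $\rho(y)+1$ with $\rho(y)<\oepp$ by hypothesis; as $\oepp$ is an infinite cardinal and hence a limit ordinal, $\rho(y)+1<\oepp$, so every element of $A$ lies below $\oepp$. Lemma~\ref{lst} then gives $\sup A<\oepp$, that is, $\rho(x)<\oepp$, closing the induction. Applying this to $x_0\in\spa(\gam)$ yields the lemma.

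The only point where the cardinality hypothesis $|X|=\aep$ enters --- and the single genuine obstacle --- is this last step: a vertex may have as many as $\aep$ preimages, so a priori the ranks of its predecessors could be cofinal in $\oepp$ and force $\rho(x)$ up to $\oepp$; it is exactly the regularity of $\aepp$ (via Lemma~\ref{lst}) that rules this out. Everything else is bookkeeping. I note in passing that type $\cho$ is used only to guarantee that $\gam$ has a (unique) root $x_0$; the same bound $\rho(x)<\oepp$ holds verbatim for connected $\gam$ of type $\rro$.
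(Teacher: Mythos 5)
Your proof is correct and follows essentially the same route as the paper's: well-founded induction on $R_\gam$, with Lemma~\ref{lst} applied to the set of ordinals $\rho(y)+1$ for $y\in x\gam^{-1}$ in the inductive step. The extra details you supply (the bound $|x\gam^{-1}|\leq\aep$ and the observation that $\oepp$ is a limit ordinal so $\rho(y)+1<\oepp$) are exactly the points the paper leaves implicit.
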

\begin{proof}
Let $x\in\spa(\gam)$.
We will prove that $\rho(x)<\oepp$ by well-founded induction.
If $x$ is $R_\gam$-minimal, then $\rho(x)=0<\oepp$.
Suppose $\rho(y)<\oepp$ for every $y\in x\gam^{-1}$. Then $\rho(y)+1<\oepp$ for every $y\in x\gam^{-1}$, and so
$\rho(x)=\sup\{\rho(y)+1:(y,x)\in R_\gam\}<\oepp$ by Lemma~\ref{lst}.
The result follows.
\end{proof}

\begin{lemma}\label{lct2}
Let $|X|=\aep$. Then for every nonzero ordinal $\mu<\oepp$, there is $\gam\in P(X)$
of type $\cho$ with root $x_0$ such that $\rho(x_0)=\mu$.
\end{lemma}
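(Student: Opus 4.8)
The plan is to build $\gam$ by transfinite recursion on $\mu$, assembling at each stage a connected transformation of type $\cho$ whose root has rank exactly $\mu$. The engine of the construction is the rank recursion~(\ref{ewel}): for $R_\gam$ the rank of a vertex equals $\sup\{\rho(y)+1:y\gam=x\}$, so the rank of the root is the supremum of $\rho(y)+1$ as $y$ runs over the immediate predecessors of the root. Hence, to realize a prescribed root rank, it suffices to place a fresh terminal vertex $x_0$ on top of suitable connected cho transformations whose own roots carry smaller ranks, and to read off $\rho(x_0)$ from~(\ref{ewel}).

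For the base case $\mu=1$ I would take the chain $[x_1\,x_0]$, whose root $x_0$ has the single predecessor $x_1$ of rank $0$, so $\rho(x_0)=1$. For a successor $\mu=\nu+1$ with $\nu\geq1$, the inductive hypothesis furnishes a connected $\del$ of type $\cho$ with root $w$ and $\rho_\del(w)=\nu$; I pick a fresh $x_0$ and let $\gam$ agree with $\del$ on $\dom(\del)$ while setting $w\gam=x_0$. Then $\gam$ is connected of type $\cho$ with root $x_0$, and since the structure below $w$ is untouched one checks that $\del=\gam_w$ in the sense of Notation~\ref{ndax} (note $w\in\ima(\gam)$ as $\nu\geq1$); hence $\rho_\gam(w)=\rho_\del(w)=\nu$ by Lemma~\ref{linc}(2), and as $w$ is the only predecessor of $x_0$ we obtain $\rho_\gam(x_0)=\nu+1=\mu$ from~(\ref{ewel}).

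For a limit ordinal $\mu$ I would fix a cofinal sequence $\lan\nu_i\ran_{i<\mathrm{cf}(\mu)}$ of nonzero ordinals with $\sup_i\nu_i=\mu$, obtain by the inductive hypothesis pairwise completely disjoint connected cho transformations $\del_i$ with roots $w_i$ and $\rho_{\del_i}(w_i)=\nu_i$, choose a fresh $x_0$, and define $\gam$ to act as $\del_i$ on each $\dom(\del_i)$ and to send $w_i\gam=x_0$ for all $i$. Every vertex then reaches $x_0$, so $\gam$ is connected; $R_\gam$ is well founded (each $\del_i$ is, and a backward path from $x_0$ enters a single $\del_i$) and $x_0$ is its unique terminal vertex, so $\gam$ is of type $\cho$ with root $x_0$. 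Because the components meet only at $x_0$, each $\del_i$ equals $\gam_{w_i}$ with $w_i\in\ima(\gam)$, so Lemma~\ref{linc}(2) gives $\rho_\gam(w_i)=\rho_{\del_i}(w_i)=\nu_i$. Then~(\ref{ewel}) yields $\rho_\gam(x_0)=\sup_i(\nu_i+1)=\mu$, the last equality holding because $\mu$ is a limit with $\sup_i\nu_i=\mu$.

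It remains to ensure $\gam\in P(X)$. I would check by induction that the construction uses at most $\ale_0+|\mu|$ vertices: a finite chain in the finite cases, one extra vertex per successor step, and at most $\mathrm{cf}(\mu)\cdot\sup_i(\ale_0+|\nu_i|)$ vertices at a limit step. Since $\mu<\oepp$ forces $|\mu|\leq\aep$, this count is at most $\aep=|X|$, so the abstract digraph can be realized on a subset of $X$—allotting disjoint portions of $X$ to the $\del_i$ so as to keep them completely disjoint—producing the desired $\gam\in P(X)$. I expect the only genuinely delicate points to be the verification that the limit-stage $\gam$ is of type $\cho$ and the identification $\del_i=\gam_{w_i}$ that feeds Lemma~\ref{linc}(2); the cardinality bookkeeping, resting on $|\mu|\leq\aep$, is the one place where the hypothesis $\mu<\oepp$ is actually used (Lemma~\ref{lct1} being its converse).
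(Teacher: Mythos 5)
Your proof is correct and follows essentially the same route as the paper's: a transfinite recursion that places a fresh root above recursively constructed connected transformations of type $\cho$ realizing smaller ranks, with the root's rank read off from the rank recursion (\ref{ewel}) and the bound $|\mu|\leq\aep$ guaranteeing the construction fits inside $X$. The only difference is cosmetic: the paper avoids your successor/limit case split by attaching, for \emph{every} nonzero $\lam<\mu$, a copy whose root has rank $\lam$ and then invoking (\ref{edord1}) to conclude $\rho(x_0)=\mu$ in both cases at once.
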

\begin{proof}
Let $0<\mu<\oepp$. We proceed by transfinite induction. The result is clearly true if $\mu=1$.
Let $\mu>1$ and suppose that the result is true for every ordinal $\lam$ such that $0<\lam<\mu$.

Fix $x_0\in X$, let $X_0=X\sm\{x_0\}$, and note that $|X_0|=\aep$. Since $\mu<\oepp$,
we have $|\mu|\leq\aep$. Thus, since $\aep\cdot\aep=\aep$
and $\mu=\{\lam:\mbox{$\lam$ is an ordinal such that $\lam<\mu$}\}$,
there is a collection $\{X_\lam\}_{0<\lam<\mu}$ of pairwise disjoint subsets of $X_0$
such that $|X_\lam|=\aep$ for every $\lam$.

Let $0<\lam<\mu$. By the inductive hypothesis,
there is $\gam_\lam\in P(X_\lam)$ of type $\cho$ with root $x_\lam$ such that $\rho(x_\lam)=\lam$.
We define $\gam\in P(X)$ as follows. Set
$\dom(\gam)=\bigcup_{0<\lam<\mu}\spa(\gam_\lam)$. For every $x\in\dom(\gam)$, define
\[
x\gam=\left\{\begin{array}{ll}
x\gam_\lam & \mbox{if $x\in\dom(\gam_\lam)$},\\
x_0 & \mbox{if $x=x_\lam$}.
\end{array}\right.
\]
Then $\gam$ is of type $\cho$, $x_0$ is the root of $\gam$, and $x_0\gam^{-1}=\{x_\lam:0<\lam<\mu\}$.
Let $\nu=\sup\{\rho(y):y\in x_0\gam^{-1}\}$. Then
\[
\nu=\sup\{\rho(x_\lam):0<\lam<\mu\}=\sup\{\lam:0<\lam<\mu\},
\]
where the last equality is true since $\rho(x_\lam)=\lam$ for every nonzero $\lam<\mu$.
Hence, either $\mu=\nu$ (if $\mu$ is a limit ordinal) or $\mu=\nu+1$ (if $\nu=\lam$ for some
nonzero $\lam<\mu$). It follows by (\ref{edord1}) that $\rho(x_0)=\mu$.
\end{proof}

\begin{lemma}\label{lct2a}
Let $|X|=\aep$ and let $\lan a_n\ran$ be an increasing sequence of ordinals $a_n<\oepp$
such that $a_0=0$. Then there is $\gam\in T(X)$ of type $\rro$ with a maximal right ray $\eta$
such that $\lan\eta^\gam_n\ran=\lan a_n\ran$.
\end{lemma}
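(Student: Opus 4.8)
The plan is to realize the prescribed rank sequence along a single ``spine'' right ray, forcing the $n$-th spine vertex to have rank exactly $a_n$ by grafting onto it a suitable $\cho$-transformation supplied by Lemma~\ref{lct2}. Concretely, fix a fresh vertex $x_0$, which will have $\rho(x_0)=a_0=0$. For each $n\geq1$ we have $a_n>a_0=0$, so by Lemma~\ref{lct2} there is a connected $\theta_n$ of type $\cho$ with root $r_n$ and $\rho_{\theta_n}(r_n)=a_n$; I would take the $\theta_n$ to have pairwise disjoint spans, each of cardinality at most $\aep$, contained in $X$. Put $x_n=r_n$ and define $\gam$ so that it agrees with $\theta_n$ on $\spa(\theta_n)\sm\{r_n\}$ for every $n\geq1$, and add the spinal edges $x_n\arga x_{n+1}$ for every $n\geq0$ (where $x_n=r_n$ for $n\geq1$). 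Thus every non-root vertex of each $\theta_n$ keeps its image, while each former root $r_n$, previously terminal, is now sent up the spine.

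First I would check that $\gam$ is connected of type $\rro$ with $\eta=[x_0\,x_1\,x_2\ldots\ran$ a maximal right ray. Every vertex of $\spa(\gam)$ lies in $\dom(\gam)$ (the non-root vertices of each $\theta_n$ via $\theta_n$, and each $r_n$ and $x_0$ via a spinal edge), so $\Gamma(\gam)$ has no terminal vertices; hence $\gam$ has no maximal chains and no maximal left rays. Following $\gam$ forward from any vertex eventually reaches the spine and then proceeds $x_i\arga x_{i+1}\arga\cdots$ without returning, so $\gam$ has no cycles; and since everything flowing backward into an $x_n$ is confined to the well-founded $\theta_n$, there is no infinite descending path, hence no double ray and no left ray. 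Therefore $\gam$ is of type $\rro$. Moreover nothing maps to $x_0$, so $x_0\notin\ima(\gam)$ and $\eta$ is maximal, and the $\theta_n$ are joined through the spine, so $\gam$ is connected.

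The heart of the argument is to verify $\rho_\gam(x_n)=a_n$ for all $n$, which I would prove by induction on $n$. Because grafting alters only forward edges, each non-root vertex of $\theta_n$ retains all of its preimages, so its $\gam$-rank equals its $\theta_n$-rank (cf.\ Lemma~\ref{linc}). The only vertex acquiring a new preimage is $r_n=x_n$, whose $\gam$-preimages are $r_n\theta_n^{-1}$ together with the spinal vertex $x_{n-1}$; applying the rank recursion (\ref{edord1}) gives $\rho_\gam(x_n)=\sup(\{\rho_{\theta_n}(u)+1:u\in r_n\theta_n^{-1}\}\cup\{\rho_\gam(x_{n-1})+1\})=\max(a_n,\,a_{n-1}+1)=a_n$, using $\rho_{\theta_n}(r_n)=a_n$, the inductive hypothesis $\rho_\gam(x_{n-1})=a_{n-1}$, and $a_{n-1}<a_n$. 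With $\rho_\gam(x_0)=0$ as the base case, this yields $\lan\eta^\gam_n\ran=\lan a_n\ran$. Finally, to make $\gam$ a \emph{full} transformation on all of $X$ I would absorb the leftover points: the span constructed so far has cardinality at most $\ale_0\cdot\aep=\aep$, so $\spa(\gam)\subseteq X$, and I attach each element of $X\sm\spa(\gam)$ as a rank-$0$ leaf mapping to $x_1$. Since $a_1\geq1$, these leaves do not change $\rho_\gam(x_1)=a_1$, they introduce no terminal vertices, cycles, or descending rays, and they enforce $\dom(\gam)=\spa(\gam)=X$, so $\gam\in T(X)$. The main obstacle is precisely this bookkeeping---confirming that grafting trees forward and padding with leaves leaves both the type ($\rro$, with no terminal vertices) and the entire rank sequence undisturbed---rather than any single hard computation.
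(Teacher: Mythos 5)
Your proposal is correct and follows essentially the same route as the paper's proof: graft the $\cho$-transformations from Lemma~\ref{lct2} (with roots of rank $a_n$) onto the spine $x_0\ra x_1\ra x_2\ra\cdots$, send all leftover points of $X$ to $x_1$, and compute $\rho_\gam(x_n)=\max\{a_{n-1}+1,a_n\}=a_n$ by induction using that $\lan a_n\ran$ is increasing. The only cosmetic difference is that the paper first fixes a partition $\{X_n\}$ of subsets of $X$ with $|X_n|=\aep$ before invoking Lemma~\ref{lct2} inside each piece, which your disjoint-spans bookkeeping accomplishes equally well.
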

\begin{proof}
Since $|X|=\aep$, there is a collection $\{X_n\}_{n\geq0}$ of pairwise disjoint subsets of $X$
such that $X_0=\{x_0\}$ and $|X_n|=\aep$ for every $n\geq1$. Let $n\geq1$. By Lemma~\ref{lct2},
there is $\gam_n\in P(X_n)$ of type $\cho$ with root $x_n$ such that $\rho_{\gam_n}(x_n)=a_n$.
Define $\gam\in T(X)$ by
\[
x\gam=\left\{\begin{array}{lll}
x\gam_n & \mbox{if $x\in\dom(\gam_n)$},\\
x_{n+1} & \mbox{if $x=x_n$},\\
x_1 & \mbox{for any other $x$}.
\end{array}\right.
\]
(See Figure~\ref{f61a}.)
Then $\gam$ is of type $\rro$ (since every $\gam_n$ is of type $\cho$). By the definition of $\gam$,
we have that $\rho_\gam(x_0)=0$ and $\eta=[x_0\,x_1\,x_2\ldots\ran$ is a maximal right ray in $\gam$. We have already
noticed that $\rho_\gam(x_0)=0=a_0$.
We will prove by induction on $n$ that $\rho_\gam(x_n)=a_n$ for every $n\geq1$.
Let $n=1$. Then, since $a_1\geq1$,
\[
\rho_\gam(x_1)=\max\{1,\rho_{\gam_1}(x_1)\}=\max\{1,a_1\}=a_1.
\]
Let $n\geq1$ and suppose $\rho_\gam(x_n)=a_n$. Then
\[
\rho_\gam(x_{n+1})=\max\{\rho_\gam(x_n)+1,\rho_{\gam_{n+1}}(x_{n+1})\}=\max\{a_n+1,a_{n+1}\}=a_{n+1},
\]
where the last equality is true since $\lan a_n\ran$ is increasing, and so $a_{n+1}>a_n$.
This concludes the inductive argument.
Thus $\eta^\gam_n=\rho_\gam(x_n)=a_n$ for every $n\geq0$, which completes the proof.
\end{proof}

\begin{figure}[h]
\[
\xy
(0,-3)*{x_0};
(0,0)*{\bullet}="x0";
(20,20)*{\bullet}="x1";
(40,40)*{\bullet}="x2";
(60,60)*{\bullet}="x3";
(64,64)*{\iddots};
(5,15)*{\bullet}="x";
(20,10)*{\bullet}="a";
(30,10)*{\bullet}="b";
(20,8)*{\vdots};
(30,8)*{\vdots};
(25,10)*{\ldots};
(58,62)*{x_3};
(38,42)*{x_2};
(18,22)*{x_1};
(3,17)*{x};
"x0";"x1" **\crv{} ?>* \dir{};
"x1";"x2" **\crv{} ?>* \dir{};
"x2";"x3" **\crv{} ?>* \dir{};
"a";"x1" **\crv{} ?>* \dir{};
"b";"x1" **\crv{} ?>* \dir{};
"x";"x1" **\crv{} ?>* \dir{};
(40,30)*{\bullet}="c";
(50,30)*{\bullet}="d";
(40,28)*{\vdots};
(50,28)*{\vdots};
(45,30)*{\ldots};
"c";"x2" **\crv{} ?>* \dir{};
"d";"x2" **\crv{} ?>* \dir{};
(60,50)*{\bullet}="e";
(70,50)*{\bullet}="f";
(60,48)*{\vdots};
(70,48)*{\vdots};
(65,50)*{\ldots};
"e";"x3" **\crv{} ?>* \dir{};
"f";"x3" **\crv{} ?>* \dir{};
\endxy
\]
\caption{The digraph of $\gam$ from Lemma~\ref{lct2a}.}\label{f61a}
\end{figure}

\begin{lemma}\label{lct3}
Let $\aepp$ be a successor cardinal. Then there is a collection $\{\lan a^\mu_n\ran\}_{\mu<\oepp}$
of increasing sequences $\lan a^\mu_n\ran$ of ordinals $a^\mu_n<\oepp$ such that for all ordinals $\mu,\lam<\oepp$,
$a^\mu_0=0$ and if $\lam<\mu$ then $a^\lam_m<a^\mu_n$ for all $m,n\geq1$.
\end{lemma}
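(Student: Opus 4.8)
The plan is to give an explicit closed-form construction rather than an inductive one. For each ordinal $\mu<\oepp$ I would define the sequence $\lan a^\mu_n\ran_{n\geq0}$ by $a^\mu_0=0$ and $a^\mu_n=\ome\cdot\mu+n$ for $n\geq1$, where $\ome\cdot\mu$ denotes ordinal multiplication. The idea is that the tails $\lan a^\mu_n\ran_{n\geq1}$ should occupy consecutive blocks of order type $\ome$ inside $\oepp$, one block for each $\mu$, arranged in increasing order of $\mu$; multiplication by $\ome$ produces exactly such a partition, since the half-open interval $[\ome\cdot\mu,\ome\cdot(\mu+1))$ has order type $\ome$ and these intervals are pairwise disjoint and increasing in $\mu$.

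With this definition I would verify the three required properties in turn. First, each $\lan a^\mu_n\ran$ is increasing with $a^\mu_0=0$: indeed $a^\mu_0=0<\ome\cdot\mu+1=a^\mu_1$, and $a^\mu_n=\ome\cdot\mu+n<\ome\cdot\mu+(n+1)=a^\mu_{n+1}$. Second, the cross-domination: if $\lam<\mu$ then $\lam+1\leq\mu$, so for all $m,n\geq1$,
\[
a^\lam_m=\ome\cdot\lam+m<\ome\cdot\lam+\ome=\ome\cdot(\lam+1)\leq\ome\cdot\mu\leq\ome\cdot\mu+n=a^\mu_n,
\]
where the strict step uses $m<\ome$ and the remaining steps use monotonicity of ordinal multiplication. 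This is precisely the condition $a^\lam_m<a^\mu_n$.

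Third --- and this is the only step where the hypothesis on $\aepp$ is used --- I must check that $a^\mu_n<\oepp$ for every $\mu<\oepp$ and every $n$. Since $\mu<\oepp$ and $\oepp$ is the initial ordinal of $\aepp$, we have $|\mu|<\aepp$; hence the ordinal $\ome\cdot\mu+n$ has cardinality $\max(\ale_0,|\mu|)<\aepp$, and therefore $\ome\cdot\mu+n<\oepp$. (Equivalently, one can construct the offsets $s_\mu=\ome\cdot\mu$ by transfinite recursion, setting $s_\mu=\sup_{\lam<\mu}(s_\lam+\ome)$ at limit stages and invoking Lemma~\ref{lst}, which rests on the regularity of the successor cardinal $\aepp$, to keep each supremum below $\oepp$.) I expect this boundedness step to be the substantive point of the lemma: the combinatorial placement of the tails into $\ome$-blocks is immediate from ordinal multiplication, whereas remaining strictly below $\oepp$ is exactly what forces the appeal to cardinal arithmetic, i.e.\ to the regularity of $\aepp$.
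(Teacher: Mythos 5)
Your proof is correct, and it takes a genuinely different route from the paper's. The paper constructs the sequences by transfinite recursion: having defined $\lan a^\lam_n\ran$ for all $\lam<\mu$, it sets $\tau=\sup\{a^\lam_n:\lam<\mu,\ n\geq0\}$, notes that this is a supremum of $|\mu|\cdot\ale_0<\aepp$ ordinals below $\oepp$, invokes Lemma~\ref{lst} (regularity of the successor cardinal) to get $\tau<\oepp$, and takes $\lan a^\mu_n\ran=\lan 0,\tau+1,\tau+2,\ldots\ran$. Your closed form $a^\mu_n=\ome\cdot\mu+n$ produces the same block structure explicitly, and your verifications of monotonicity and of the cross-inequality $a^\lam_m<\ome\cdot(\lam+1)\leq\ome\cdot\mu\leq a^\mu_n$ are all sound. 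What the explicit formula buys is that the boundedness step becomes a direct cardinality computation, $|\ome\cdot\mu+n|\leq\max(\ale_0,|\mu|)\leq\aep<\aepp$, which uses only that $\oepp$ is an \emph{initial} ordinal --- not that it is regular. So your closing remark slightly misdiagnoses your own argument: it is the paper's recursive construction, which must bound a supremum of fewer than $\aepp$ arbitrary previously chosen ordinals, that genuinely needs Lemma~\ref{lst} and hence regularity; your version would prove the lemma verbatim for any infinite cardinal in place of $\aepp$. (Your parenthetical recursive alternative, defining $s_\mu=\sup_{\lam<\mu}(s_\lam+\ome)$ at limits, is essentially the paper's proof.) The only cosmetic blemish is the claim that $\ome\cdot\mu+n$ has cardinality $\max(\ale_0,|\mu|)$, which fails for $\mu=0$ where the ordinal is finite; the conclusion $a^0_n=n<\oepp$ is of course still immediate.
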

\begin{proof}
We construct such a collection by transfinite recursion. We define $\lan a^0_n\ran=\lan0,1,2,3,\ldots\ran$.
Let $\mu$ be an ordinal such that $0<\mu<\oepp$ and suppose $\lan a^\lam_n\ran$ satisfying the hypotheses has been defined
for every ordinal $\lam<\mu$.
Let $A=\{a^\lam_n:\lam<\mu\mbox{ and }n\geq0\}$ and $\tau=\sup A$. Then $|A|=|\mu|\cdot\ale_0<\aepp$, and so
$\tau<\oepp$ by Lemma~\ref{lst}. Define $\lan a^\mu_n\ran=\lan 0,\tau+1,\tau+2,\tau+3,\ldots\ran$
and note that $\lan a^\mu_n\ran$ is an increasing sequence of ordinals $a^\mu_n<\oepp$ with $a^\mu_0=0$.
The construction has been completed.
It is clear from the construction that $a^\lam_m<a^\mu_n$ for all $\lam,\mu<\oepp$ with $\lam<\mu$ and all $m,n\geq1$.
\end{proof}

\begin{rem}\label{rct3}
{\rm
Let $\{\lan a^\mu_n\ran\}_{\mu<\oepp}$ be a collection from Lemma~\ref{lct3}.
Then it is clear that for all ordinals $\lam,\mu<\oepp$, if $\lam<\mu$ then $\lan a^\lam_n\ran$ does not dominate
$\lan a^\mu_n\ran$.
}
\end{rem}

\begin{defi}\label{dis}
{\rm
Let $\aepp$ be a successor cardinal. Denote by $\iso$ the set of all increasing
sequences $\lan a_n\ran$ of ordinals $a_n<\oepp$ such that $a_0=0$. Define a relation $\approx$ on $\iso$ by
\[
\lan a_n\ran\approx\lan b_n\ran\mbox { if $\lan b_n\ran$ dominates $\lan a_n\ran$ and $\lan a_n\ran$ dominates $\lan b_n\ran$}.
\]
It is straightforward to show that $\approx$ is an equivalence relation on $\iso$.
We denote by $[\lan a_n\ran]_\approx$ the equivalence class of $\lan a_n\ran$, and by $\iso^\approx$
the set of all equivalence classes of $\approx$.
}
\end{defi}

\begin{lemma}\label{lct4}
For any successor cardinal $\aepp$,
$|\iso|=\aepp^{\ale_0}$ and $\aepp\leq|\iso^\approx|\leq\aepp^{\ale_0}$.
\end{lemma}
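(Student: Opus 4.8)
The plan is to establish four inequalities and then combine them: the upper bounds $|\iso|\leq\aepp^{\ale_0}$ and $|\iso^\approx|\leq\aepp^{\ale_0}$, and the lower bounds $|\iso|\geq\aepp^{\ale_0}$ and $\aepp\leq|\iso^\approx|$. The equality $|\iso|=\aepp^{\ale_0}$ then follows from the Cantor--Schr\"oder--Bernstein theorem. The two upper bounds are immediate. Every element of $\iso$ is, in particular, a function from $\mathbb{N}$ into the set of ordinals below $\oepp$, and that set has cardinality $\aepp$; hence $|\iso|\leq\aepp^{\ale_0}$. Since $\iso^\approx$ is a quotient of $\iso$, we also get $|\iso^\approx|\leq|\iso|\leq\aepp^{\ale_0}$.

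For the lower bound $|\iso|\geq\aepp^{\ale_0}$, I would exhibit an injection from the set of all sequences $\lan c_n\ran_{n\geq1}$ of ordinals with $1\leq c_n<\oepp$ into $\iso$. Given such a sequence, set $a_0=0$ and $a_n=c_1+c_2+\cdots+c_n$ (ordinal sum) for $n\geq1$. Since each $c_i\geq1$, the sequence $\lan a_n\ran$ is strictly increasing, and since the initial ordinal $\oepp$ is closed under ordinal addition of smaller ordinals, $a_n<\oepp$ for every $n$; thus $\lan a_n\ran\in\iso$. The map $\lan c_n\ran\mapsto\lan a_n\ran$ is injective: from $a_{n-1}\leq a_n$ and the unique-right-subtraction property of ordinal addition, $c_n$ is the unique ordinal with $a_{n-1}+c_n=a_n$, so $\lan c_n\ran$ can be recovered from $\lan a_n\ran$. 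As there are $\aepp^{\ale_0}$ choices of $\lan c_n\ran$ (the interval $[1,\oepp)$ has cardinality $\aepp$), this yields $|\iso|\geq\aepp^{\ale_0}$, and hence $|\iso|=\aepp^{\ale_0}$.

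Finally, for $\aepp\leq|\iso^\approx|$ I would invoke Lemma~\ref{lct3}, which supplies a collection $\{\lan a^\mu_n\ran\}_{\mu<\oepp}$ of $\aepp$ members of $\iso$ (each is increasing with values below $\oepp$ and $a^\mu_0=0$) satisfying $a^\lam_m<a^\mu_n$ for all $m,n\geq1$ whenever $\lam<\mu$. I claim these represent $\aepp$ distinct $\approx$-classes. Indeed, for $\lam<\mu$, Remark~\ref{rct3} shows that $\lan a^\lam_n\ran$ does not dominate $\lan a^\mu_n\ran$, so the two sequences fail to be mutually dominating and therefore $\lan a^\lam_n\ran\not\approx\lan a^\mu_n\ran$. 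Since there are $\aepp$ ordinals $\mu<\oepp$, we obtain at least $\aepp$ equivalence classes, giving $|\iso^\approx|\geq\aepp$. The only genuinely delicate point is the injectivity argument in the second paragraph, which rests on the unique-right-subtraction property of ordinal addition together with the closure of the initial ordinal $\oepp$ under ordinal sums; the rest is either bookkeeping or a direct appeal to the already-established Lemma~\ref{lct3} and Remark~\ref{rct3}.
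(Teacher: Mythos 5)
Your proposal is correct and follows essentially the same route as the paper: the same subset/quotient counting for the upper bounds, the same partial-sums injection (with injectivity from left cancellation of ordinal addition) for $|\iso|\geq\aepp^{\ale_0}$, and the same appeal to Lemma~\ref{lct3} and Remark~\ref{rct3} for $|\iso^\approx|\geq\aepp$. Your explicit remark that $\oepp$ is closed under finite ordinal sums of smaller ordinals (so the partial sums stay below $\oepp$) is a small point the paper leaves implicit.
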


\begin{proof}
Denote by $S_{\oepp}$ the set of all sequences $\lan s_n\ran$ of ordinals $s_n<\oepp$. Then
$S_{\oepp}$ is the set of all functions from $\mathbb N$ to $\oepp$, and so $|S_{\oepp}|=|\oepp|^{|\mathbb N|}=\aepp^{\ale_0}$.
Since $\iso$ is a subset of $S_{\oepp}$, we have $|\iso|\leq\aepp^{\ale_0}$.
Let $S^0_{\oepp}$ be the subset of $S_{\oepp}$ consisting of all sequences $\lan s_n\ran$
such that $s_n>0$ for all $n\geq0$. Then $|S^0_{\oepp}|=|S_{\oepp}|=\aepp^{\ale_0}$. Define a function
$f:S^0_{\oepp}\to\iso$ by $\lan s_n\ran f=\lan a_n\ran$, where
\[
a_0=0\mbox{ and }a_{n+1}=a_n+s_n\mbox{ for all $n\geq0$}.
\]
Then $f$ is injective (since for all ordinals $\mu,\lam_1,\lam_2$, if $\mu+\lam_1=\mu+\lam_2$ then $\lam_1=\lam_2$
\cite[page~120, Lemma~5.4]{HrJe99}), and so $|\iso|\geq|S^0_{\oepp}|=\aepp^{\ale_0}$.
We have proved that $|\iso|=\aepp^{\ale_0}$.

We have $|\iso^\approx|\leq|\iso|=\aepp^{\ale_0}$. Let $\{\lan a^\mu_n\ran\}_{\mu<\oepp}$ be a collection
of sequences constructed as in Lemma~\ref{lct3}. Then for all ordinals $\lam,\mu<\oepp$,
$\lan a^\mu_n\ran\in\iso$ and if $\lam<\mu$ then $\lan a^\lam_n\ran$ does not dominate $\lan a^\mu_n\ran$
(see Remark~\ref{rct3}).
It follows that any two different sequences from the collection
$\{\lan a^\mu_n\ran\}_{\mu<\oepp}$ are in different equivalence classes of $\approx$. Since
there are $\aepp$ sequences in the collection, it follows that $|\iso^\approx|\geq\aepp$.
This concludes the proof.
\end{proof}

We can now prove the counting theorem.
For a set $A$, we denote by $\mathcal{P}(A)$
the power set of $A$.

\begin{theorem}\label{tpcc}
Let $X$ be an infinite set with $|X|=\aep$. Then in $P(X)$ there are:
\begin{itemize}
  \item [\rm(1)] $\max\{2^{\ale_0},\ale_{\vep+1}\}$ conjugacy classes containing a representative with a cycle,
of which $\ale_0$ have a connected representative;
  \item [\rm(2)] $2^{\aep}$ conjugacy classes containing a representative with a connected component of type $\rro$,
but no cycles, of which at least $\aepp$ and at most $\aepp^{\ale_0}$ have a connected representative;
  \item [\rm(3)] $\ale_{\vep+1}$ conjugacy classes containing a representative with a connected component of type $\cho$, but no cycles
or connected components of type $\rro$,
of which $\ale_{\vep+1}$ have a connected representative.
\end{itemize}
In total,
there are $2^{\aep}$ conjugacy classes in $P(X)$, of which
at least $\aepp$ and at most $\aepp^{\ale_0}$ have a connected representative.
\end{theorem}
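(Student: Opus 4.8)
The plan is to count conjugacy classes by counting the complete invariants furnished by Theorem~\ref{tpcha}. For nonzero $\al$ this invariant records: the cycle set $\cs(\al)$; the Boolean ``$\al$ has a double ray but no cycle''; the family of connected components of type $\rro$ taken up to domination of their sequences $\lan\eta^\gam_n\ran$; the Boolean ``$\al$ has a maximal left ray''; and the family of connected components of type $\cho$ taken up to the rank of their roots. First I would record the three atomic correspondences for \emph{connected} transformations. By Proposition~\ref{pcyc}, two connected transformations with cycles are conjugate iff their cycles have equal length, giving a bijection with $\mathbb Z_+$ and hence $\ale_0$ such classes. By Proposition~\ref{prro} together with Lemmas~\ref{lct5}, \ref{lct2a}, and~\ref{lct4}, the connected classes of type $\rro$ are in bijection with $\iso^\approx$, whence there are between $\aepp$ and $\aepp^{\ale_0}$ of them. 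By Proposition~\ref{pcho} with Lemmas~\ref{lct1} and~\ref{lct2}, the connected classes of type $\cho$ are in bijection with the nonzero ordinals $<\oepp$, giving exactly $\aepp$. (Connected double rays form a single class, and likewise connected maximal left rays, by Lemmas~\ref{ldch} and~\ref{llra}.) These settle the ``connected representative'' counts in (1), (2), and (3).

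For the full counts I would analyse which pieces of the invariant are active in each category, repeatedly using the converse half of Theorem~\ref{tpcha} to see that certain components are absorbed and hence do not refine the class. For (1), whenever $\al$ has a cycle, Lemmas~\ref{lcyc} and~\ref{ldch} let every double ray and every $\rro$ component of $\al$ map into a cycle of $\bt$, so these contribute nothing; if in addition $\al$ has a maximal left ray, then Lemma~\ref{llra} absorbs all $\cho$ components as well. Thus the invariant reduces to $\cs(\al)$ (ranging over the $2^{\ale_0}$ antichains of $(\mathbb Z_+,\mid)$) together with the left-ray Boolean and, when there is no maximal left ray, the initial segment of ordinals $<\oepp$ determined by the ranks of the $\cho$-roots (at most $\aepp$ values). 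The count is therefore $2^{\ale_0}\cdot\aepp=\max\{2^{\ale_0},\aepp\}$, the lower bound coming from varying $\cs$ among joins of cycles and, independently, from adjoining to a fixed cycle a single $\cho$-component of prescribed root rank (Lemma~\ref{lct2}). For (3), $\cs(\al)=\emptyset$ and there are no $\rro$ components, so the invariant is carried by the $\cho$-roots; since the ordinals are linearly ordered the relevant datum is an initial segment of $\oepp$, giving exactly $\aepp$ classes, with the lower bound again from the connected case.

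The decisive category is (2), for which the count is $2^{\aep}$. The upper bound is immediate: every class is nonempty and $|P(X)|=(\aep+1)^{\aep}=2^{\aep}$, so there are at most $2^{\aep}$ classes altogether; this also yields the global upper bound. The hard direction is the lower bound: I must produce $2^{\aep}$ pairwise non-conjugate transformations having an $\rro$ component and no cycle. Restricting to joins of $\rro$ components (and nothing else), Theorem~\ref{tpcha}(3a)--(3b) says two such are conjugate iff their sets of domination-classes are mutually cofinal, i.e.\ iff they generate the same downward-closed subset of $\iso^\approx$; so it suffices to exhibit $2^{\aep}$ distinct downward-closed sets. This follows at once from a domination-antichain of size $\aep$ in $\iso^\approx$, since distinct subsets of an antichain have distinct downward closures and each such subset, having size $\le\aep$, is realizable inside $X$ by Lemma~\ref{lct2a} (split $X$ into $\aep$ blocks of size $\aep$, one per component).

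Constructing this antichain is where the real work lies, and I expect it to be the main obstacle. First one shows that any two sequences with different suprema are comparable (the one with larger supremum dominates), so an antichain must consist of $\ome$-sequences cofinal in a single $\tau<\oepp$ of cofinality $\ome$. Taking $\tau=\oep\cdot\ome$ and spending a growing number of terms in each interval $[\oep\cdot m,\oep\cdot(m+1))$ prevents a bounded index-shift from escaping an interval, so that domination of the resulting $\rro$ components reduces to eventual domination of the associated ``offset functions'' $\mathbb N\to\oep$; the task then becomes finding an antichain of size $\aep$ in $(\oep^{\mathbb N},\le^{*})$, i.e.\ $\aep$ pairwise eventually-incomparable functions into $\oep$, uniformly in $\vep$ (including the singular case). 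This purely set-theoretic step is the crux. Finally, combining the three categories with the remaining classes built only from double rays and maximal left rays (finitely many) gives the stated global total $2^{\aep}$ and the connected total between $\aepp$ and $\aepp^{\ale_0}$.
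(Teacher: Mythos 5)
Your overall architecture coincides with the paper's: Theorem~\ref{tpcha} supplies the complete invariant, the upper bounds come from injecting conjugacy classes into products of invariant spaces, and the lower bounds come from explicit families. Parts (1) and (3), all of the connected-representative counts, and every upper bound match the paper's proof essentially step for step (the paper packages the $\cho$-data as the triple $(\cs(\al),\sa,\ia)$ together with the left-ray and double-ray Booleans rather than as an ``initial segment,'' but this is the same information and gives the same cardinalities).

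The genuine gap is exactly where you predict it: the lower bound $2^{\aep}$ in part (2). You correctly observe that, because conditions (3a)--(3b) of Theorem~\ref{tpcha} involve \emph{one-way} domination, the subset trick (joining completely disjoint copies of connected $\rro$ components indexed by subsets of $\oep$) needs the chosen components to form an \emph{antichain} in the domination quasi-order: pairwise failure of mutual domination is not enough, since a set and a proper subset taken from a domination chain have the same downward closure and hence yield conjugate joins. Having made this (correct and important) observation, however, you leave the construction of an $\aep$-sized domination antichain in $\iso$ as ``the crux'': neither the reduction to eventual domination of functions $\mathbb N\to\oep$ (which must absorb the index shift $k$ in Definition~\ref{ddom}) nor the final existence statement --- $\aep$ pairwise $\leq^{*}$-incomparable functions into $\oep$ for \emph{arbitrary} $\vep$, including singular $\aep$ and $\aep>2^{\ale_0}$ --- is actually proved, and the latter is not a routine fact. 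Without this step your argument certifies only $\aepp$ classes in category (2) rather than $2^{\aep}$, so the global total is not established. For comparison, the paper's proof at this point takes $\aep$ pairwise non-conjugate connected transformations of type $\rro$ (available because $|B'_1|\geq\aepp$, ultimately via the collection of Lemma~\ref{lct3}), places copies on a partition of $X$ into $\aep$ blocks, forms the joins over all nonempty $K\subseteq\oep$, and concludes non-conjugacy of distinct joins from the failure of $\approx$ between their components; your analysis shows that the antichain property, not merely pairwise non-conjugacy of the connected pieces, is what the subset argument actually requires, so you should either construct such an antichain explicitly (Lemma~\ref{lct2a} then realizes it inside $P(X)$) or replace this step, before the proposal can be considered complete.
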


\begin{proof}
For $\al\in P(X)$,
we define $\ia,\ja\in\{0,1\}$ by $\ia=1$ if $\rho_\gam(x_0)=\sa$ for some
connected component of $\al$ of type $\cho$ with root $x_0$ (and $\ia=0$ otherwise);
and $\ja\in\{0,1\}$ by $\ja=1$ if $\al$ has a double ray (and $\ja=0$ otherwise).

To prove (1), let $A=\{[\al]_c:\mbox{$\al\in P(X)$ has a cycle}\}$.
Let
\[
A'=\{[\al]_c\in A:\mbox{$\al$ has no maximal left rays}\} \mbox{ and }
A''=\{[\al]_c\in A:\mbox{$\al$ has a maximal left ray}\}.
\]
By Theorem~\ref{tpcha}(4),
$\{A',A''\}$ is a partition of $A$.
Define $f':A'\to\mathcal{P}(\mathbb Z_+)\times(\oepp+1)\times\{0,1\}$ by
$([\al]_c)f'=(\cs(\al),\sa,\ia)$.
Then $f'$ is well defined and injective by Theorem~\ref{tpcha} and Lemma~\ref{lct1}.
(See Definition~\ref{dsal} and the discussion following the definition to see why $\ia$ is needed.)
Similarly, the mapping $f'':A''\to\mathcal{P}(\mathbb Z_+)\times(\oepp+1)\times\{0,1\}$
defined by $([\al]_c)f''=(\cs(\al),\sa,\ia)$ is well defined and injective. Thus
\[
|A'|\leq|\mathcal{P}(\mathbb Z_+)|\cdot|\oepp+1|\cdot2=2^{\ale_0}\cdot\aepp\cdot2=\max\{2^{\ale_0},\ale_{\vep+1}\},
\]
and the same holds for $|A''|$. Hence
\[
|A|=|A'|+|A''|=2|A'|\leq2\max\{2^{\ale_0},\ale_{\vep+1}\}=\max\{2^{\ale_0},\ale_{\vep+1}\}.
\]

Let $P$ be the set of prime positive integers. For any nonempty subset $Q\subseteq P$, let $\{\thet_q\}_{q\in Q}$
be a collection of completely disjoint cycles $\thet_q$ such that $\thet_q$ has length $q$ for every $q\in Q$.
(Such a collection exists since $X$ is infinite.)
Define $\bt_{\tq}\in P(X)$ by $\bt_{\tq}=\join_{q\in Q}\thet_q$.
For all nonempty subsets $Q_1,Q_2\subseteq P$ with $Q_1\ne Q_2$,
we have $(\bt_{\tq_1},\bt_{\tq_2})\notin\,\con$ by Theorem~\ref{tpcha}(1).
It follows that
$|A|\geq\mathcal{P}(P)=2^{\ale_0}$.
By Lemma~\ref{lct2}, for every nonzero ordinal $\mu<\oepp$, there is $\gam_\mu\in P(X)$
of type $\cho$ with root $x_0$ such that $\rho(x_0)=\mu$.
For all nonzero ordinals $\lam,\mu<\oepp$ with $\lam\ne\mu$, we have
$(\gam_\lam,\gam_\mu)\notin\,\con$ by Theorem~\ref{tpcha}(5).
It follows that
$|A|\geq|\oepp|=\aepp$. Hence $|A|\geq\max\{2^{\ale_0},\aepp\}$, and so
$|A|=\max\{2^{\ale_0},\aepp\}$.

Let $A_1=\{[\gam]_c:\mbox{$\gam\in P(X)$ has a cycle and $\gam$ is connected}\}$.
Fix a subset $X_0=\{x_0,x_1,\ldots\}$
of $X$, and for every integer $n\geq0$, define a cycle $\gam_n=(x_0\,x_1\ldots\,x_{n-1})\in P(X)$.
Then, by Proposition~\ref{pcom} and Theorem~\ref{tpcha},
$A_1=\{[\gam_0]_c,[\gam_1]_c,[\gam_2]_c,\ldots\}$, and so $|A_1|=\ale_0$. We have proved~(1).

To prove (2), let
\[
B=\{[\al]_c:\mbox{$\al\in P(X)$ has a connected component of type $\rro$, but no cycles}\},
\]
and let $B_1$ be the subset of $B$
consisting of all conjugacy classes $[\gam]_c\in B$ such that $\gam$ is connected.
Fix a double ray $\omega=\lan\ldots x_{-1}\,x_0\,x_1\ldots\ran\in P(X)$ and note that
\[
B_1=\{[\gam]_c:\mbox{$\gam\in P(X)$ is of type $\rro$}\}\cup\{[\omega]_c\}.
\]
Let $B'_1=\{[\gam]_c:\mbox{$\gam\in P(X)$ is of type $\rro$}\}$.
For every $\gam\in P(X)$ of type $\rro$,
we fix a maximal right ray $\eta^\gam$ in $\gam$. Define a function $g:B'_1\to\iso^\approx$ by
$([\gam]_c)g=[\lan\eta^\gam_n\ran]_\approx$. Note that $\lan\eta^\gam_n\ran\in\iso$ by Lemma~\ref{lct1}.
Suppose $[\gam_1]_c,[\gam_2]_c\in B'_1$ with $[\gam_1]_c=[\gam_2]_c$.
Then, by Theorem~\ref{tpcha}(3) and Lemma~\ref{lct5}, the sequences $\lan\eta^{\gam_1}_n\ran$
and $\lan\eta^{\gam_2}_n\ran$ dominate each other, and so
$[\lan\eta^{\gam_1}_n\ran]_\approx=[\lan\eta^{\gam_2}_n\ran]_\approx$. We have proved that $g$ is well defined.
The function $g$ is also injective (by Theorem~\ref{tpcha}(3)) and surjective (by Lemma~\ref{lct2a}).
Thus $|B'_1|=|\iso^\approx|$, and so, by Lemma~\ref{lct4}, $\aepp\leq|B'_1|\leq\aepp^{\ale_0}$.
Then $\aepp\leq|B_1|\leq\aepp^{\ale_0}$ since $|B_1|=|B'_1|+1$.

As to the cardinality of $B$, clearly $|B|\leq|P(X)|=(\aep+1)^{\aep}=2^{\aep}$.
Let
\begin{align}
B'&=\{[\al]_c\in B:\mbox{$\al$ has no maximal left rays or double rays}\},\notag\\
B''&=\{[\al]_c\in B:\mbox{$\al$ has a maximal left ray but no double rays}\}\notag.
\end{align}
By Theorem~\ref{tpcha}(3)(4),
$\{B',B'',\{[\omega]_c\}\}$ is a partition of $B$.

We will now prove that $|B'|\geq2^{\aep}$.
Since $|B'_1|\geq\aepp$, there is a collection $\{\gam_\mu\}_{\mu<\oepp}$ of transformations
$\gam_\mu\in P(X)$ of type $\rro$ such that
$(\gam_\mu,\gam_\lam)\notin\,\,\con$ if $\mu\ne\lam$.
Since $|\oep|=\aep$ and $\aep\cdot\aep=\aep$, there is a partition
$\{X_\mu\}_{\mu<\oep}$ of $X$ such that $|X_\mu|=|X|=\aep$ for every $\mu<\oep$. Let $\mu<\oep$. Since
$|X_\mu|=|X|$, there is a bijection $h_\mu:X_\mu\to X$. We can use $h_\mu$ to
obtain a ``copy'' of $\gam_\mu$ in $P(X_\mu)$:
define $\gam'_\mu\in P(X_\mu)$ by
\[
x\gam'_\mu=y\miff (xh_\mu)\gam_\mu=yh_\mu\mbox{ (for all $x,y\in X_\mu$)}.
\]
Let $\mu,\lam<\oep$ with $\mu\ne\lam$. Then $(\gam_\mu,\gam_\lam)\notin\,\,\con$, and so,
by Theorem~\ref{tpcha}(3) and Lemma~\ref{lct5}, $(\lan\eta_n\ran,\lan\xi_n\ran)\notin\,\,\approx$
for every maximal right ray $\eta$ in $\gam_\mu$ and every maximal right ray
$\xi$ in $\gam_\lam$. It follows that
\begin{equation}\label{e1tccc}
(\lan\eta'_n\ran,\lan\xi'_n\ran)\notin\,\,\approx
\end{equation}
for every maximal right ray $\eta'$ in $\gam'_\mu$ and every maximal right ray
$\xi'$ in $\gam'_\lam$. Let $K$ be a nonempty subset of $\oep$. Select $\nu=\nu_{\!\tk}\in K$ and a
maximal right ray $[x_0\,x_1\,x_2\ldots\ran$ in $\gam'_\nu$.
Define $\al_{\tk}\in P(X)$ by $\al_{\tk}=\join_{\mu\in K}\gam'_\mu$,
and note that $\al_{\tk}$ does not have a cycle or a double ray.
Let $K,L$ be nonempty subsets of $\oep$ such that $K\ne L$. We may assume that there is $\mu\in K$
such that $\mu\notin L$. Consider $\gam'_\mu$, which is a connected component of $\al_{\tk}$.
Let $\gam'_\lam$ be any connected component of $\al_{\tl}$. Then, by (\ref{e1tccc}),
$(\lan\eta'_n\ran,\lan\xi'_n\ran)\notin\,\,\approx$
for every maximal right ray $\eta'$ in $\gam'_\mu$ and every maximal right ray
$\xi'$ in $\gam'_\lam$. (Note that, by the definition of $\al_{\tk}$, this is also true when $\mu=\nu_{\!\tk}$
or $\lam=\nu_{\!\tl}$.) Thus $(\al_{\tk},\al_{\tl})\notin\,\,\con$ by Theorem~\ref{tcha}(3).
Hence any two different transformations from the collection
$\{\al_{\tk}\}_{\emptyset\ne K\subseteq\oep}$ are in different equivalence classes of $\con$. Since
there are $2^{\aep}$ transformations in the collection, it follows that $|B'|\geq2^{\aep}$.
Hence $|B|=|B'|+|B''|+|\{[\omega]_c\}|\geq|B'|\geq2^{\aep}$, and so $|B|=2^{\aep}$. We have proved~(2).

To prove (3),
let $C$ be the set of all $[\al]_c$ such that $\al\in P(X)$
has a connected component of type $\cho$, but no cycles or connected components of type $\rro$.
Let $C'=\{[\al]_c\in C:\mbox{$\al$ has no maximal left rays}\}$ and
$C''=\{[\al]_c\in C:\mbox{$\al$ has a maximal left ray}\}$. By Theorem~\ref{tpcha}(4),
$\{C',C''\}$ is a partition of $C$. Fix a maximal left ray $\lam=\lan\ldots x_2\,x_1\,x_0]\in P(X)$
and note that $C''=\{[\lam]_c\}$.
Define $h:C'\to(\oepp+1)\times\{0,1\}\times\{0,1\}$ by
$([\al]_c)h=(\sa,\ia,\ja)$.
Then $h$ is well defined and injective by Theorem~\ref{tpcha} and Lemma~\ref{lct1}, and so
$|C'|\leq\aepp\cdot2\cdot2=\ale_{\vep+1}$.
Thus $|C|=|C'|+|C''|=|C'|+1\leq\ale_{\vep+1}+1=\ale_{\vep+1}$.

Let $C_1$ be the subset of $C$
consisting of all $[\gam]_c\in C$ such that $\gam$ is connected. Note that
$C_1=\{[\gam]_c:\mbox{$\gam\in P(X)$ is of type $\cho$}\}\cup\{[\lam]_c\}$.
As in the proof of (1), we can construct a collection $\{\gam_\mu\}_{0<\mu<\oepp}$
of connected elements of $P(X)$ of type $\cho$ such that $(\gam_\lam,\gam_\mu)\notin\,\con$
if $\lam\ne\mu$. Thus $|C_1|\geq\aepp$, and so $\aepp\leq|C_1|\leq|C|\leq\aepp$.
Hence $|C|=|C_1|=\aepp$, which concludes the proof of (3).

The conjugacy classes considered in (1)--(3) cover all conjugacy classes in $P(X)$.
Thus, there are at most $\max\{2^{\ale_0},\ale_{\vep+1}\}+2^{\aep}+\aepp=2^{\aep}$
conjugacy classes in $P(X)$
(which also follows from the fact that $|P(X)|=2^{\aep}$).
By (2), there are at least $2^{\aep}$ conjugacy classes, so the number of conjugacy classes
in $P(X)$ is $2^{\aep}$.
By (1)--(3), at least $\aepp$ and at most $\ale_0+\aepp^{\ale_0}+\aepp=\aepp^{\ale_0}$ of these conjugacy classes have
a connected representative.
(We point out that if a conjugacy class has a connected representative,
it does not imply that all representatives of this class are connected.)
\end{proof}

\section{Conjugacy in $T(X)$}\label{stx}
\setcounter{equation}{0}
\setcounter{figure}{0}
A characterization of the conjugacy $\con$ in the monoid $T(X)$ of full transformations on $X$
is simpler than that of the conjugacy in $P(X)$ (see Section~\ref{spx}). The reason is
that a connected component of $\al\in T(X)$ cannot have a maximal left ray or a maximal chain.
Suppose $\al,\bt\in T(X)$ and $\al\con\bt$ in $P(X)$. Then $\al\phi=\phi\bt$ and $\bt\psi=\psi\al$ for some
rp-homomorphisms $\phi$ and $\psi$. By Lemma~\ref{lgpa}, $X=\spa(\al)\subseteq\dom(\phi)$ and
$X=\spa(\bt)\subseteq\dom(\psi)$. Therefore, $\phi,\psi\in T(X)$, and so $\al\con\bt$ in $T(X)$.
In other words, $\con$ in $T(X)$ is the restriction of $\con$ in $P(X)$ to $T(X)\times T(X)$.

These observations and Theorem~\ref{tpcha} give a characterization of $\con$ in $T(X)$.

\begin{theorem}\label{tcha}
Let $\al,\bt\in T(X)$. Then $\al\!\con\!\bt$ in $T(X)$ if and only if exactly one of the following conditions is satisfied:
\begin{itemize}
  \item [\rm(1)] both $\al$ and $\bt$ have a cycle and $\cs(\al)=\cs(\bt)$;
  \item [\rm(2)] both $\al$ and $\bt$ have a double ray but no cycles;
  \item [\rm(3)] all connected components of both $\al$ and $\bt$ have type $\rro$ and:
\begin{itemize}
  \item [\rm(a)] for every connected component $\gam$ of $\al$, there is
a connected component $\del$ of $\bt$ such that $\lan \xi^\del_n\ran$ dominates $\lan \eta^\gam_n\ran$
for some maximal right ray $\eta$ in $\gam$ and some maximal right ray $\xi$ in $\del$, and
  \item [\rm(b)] for every connected component $\del$ of $\bt$, there is
a connected component $\gam$ of $\al$ such that $\lan \eta^\gam_n\ran$ dominates $\lan \xi^\del_n\ran$
for some maximal right ray $\xi$ in $\del$ and some maximal right ray $\eta$ in $\gam$.
\end{itemize}
\end{itemize}
\end{theorem}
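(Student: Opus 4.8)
The plan is to reduce the problem to the component-wise rp-homomorphism criterion and then sort the possibilities by a three-way case split on $\al$. As observed just before the statement, $\con$ on $T(X)$ is the restriction of $\con$ on $P(X)$, so by Theorem~\ref{tconrph} (with $T(X)^1=T(X)$, since $T(X)$ is a constant-rich monoid) we have $\al\con\bt$ iff there are rp-homomorphisms $\Gamma(\al)\to\Gamma(\bt)$ and $\Gamma(\bt)\to\Gamma(\al)$. Since every $\al\in T(X)$ is total, $\Gamma(\al)$ has no terminal vertices; hence $\al$ has neither maximal left rays nor connected components of type $\cho$, and by Remark~\ref{rtyp} each connected component of $\al$ is a cycle, a double ray, or of type $\rro$. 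I would record this at the outset, as it is exactly what collapses the $P(X)$ conditions of Theorem~\ref{tpcha} to the three cases here.

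The engine is Proposition~\ref{pspl}: because $\spa(\al)=X$ for $\al\in T(X)$, its clause (b) is automatic, so a one-sided rp-homomorphism $\Gamma(\al)\to\Gamma(\bt)$ exists exactly when every connected component of $\al$ is rp-homomorphic to some connected component of $\bt$. Thus $\al\con\bt$ iff every component of $\al$ is rp-homomorphic to a component of $\bt$ \emph{and} conversely. I would then assemble the component-level dictionary for connected $\gam,\del$ of the three admissible types: a cycle of length $k$ is rp-homomorphic only to a cycle-bearing component, and then to a cycle of length $m$ iff $m\mid k$ (Proposition~\ref{pcyc}, Lemma~\ref{lcom1}); a double ray or a $\rro$ component is rp-homomorphic into any cycle-bearing component (Lemma~\ref{lcyc}) and into any double ray (Lemma~\ref{ldch}); and a $\rro$ component is rp-homomorphic to a $\rro$ component precisely under the domination condition of Proposition~\ref{prro}.

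With this dictionary the case split is clean. If $\al$ has a cycle, some component of $\al$ forces $\bt$ to carry a cycle, the antichain argument of Theorem~\ref{tpcha} yields $\cs(\al)=\cs(\bt)$, and conversely matching cycle sets let every cycle of $\al$ map to a suitable cycle of $\bt$ while every double-ray or $\rro$ component is absorbed into a cycle of $\bt$ by Lemma~\ref{lcyc}; this is case~(1). If $\al$ has a double ray but no cycle, then $\bt$ has no cycle (a cycle of $\bt$ would demand a cycle of $\al$) and $\bt$ has a double ray (the double ray of $\al$ cannot map into a $\rro$ component, so with no cycle available it needs a double ray), and the converse follows from Lemma~\ref{ldch}; this is case~(2). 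If $\al$ has neither a cycle nor a double ray, all its components are of type $\rro$; the same two exclusions force all components of $\bt$ to be of type $\rro$, and Propositions~\ref{pspl} and~\ref{prro} turn the two-sided condition into exactly the per-component domination clauses (a) and (b); this is case~(3). The three cases are separated by whether $\al$ has a cycle, a double ray but no cycle, or neither, and in each the partner $\bt$ is forced into the same class, so precisely one condition holds for a conjugate pair.

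The step I expect to be the main obstacle is proving the exclusions that pin $\bt$ to $\al$'s class, chiefly that a double ray cannot be rp-homomorphic to a transformation of type $\rro$: the images of $\lan\ldots\,x_{-1}\,x_0\,x_1\ldots\ran$ form a bi-infinite walk in $\Gamma(\del)$, which either is a double ray or closes up into a cycle, both impossible for type $\rro$. Together with the fact that a cycle maps only into a cycle-bearing component (Lemma~\ref{lcom1}) and the absorption of double-ray and $\rro$ components into cycles (Lemma~\ref{lcyc}), these exclusions deliver the mutual exclusivity and the equivalence with the surviving conditions of Theorem~\ref{tpcha}.
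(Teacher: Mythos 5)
Your proposal is correct and follows essentially the same route as the paper: the paper's own proof consists of the observation (made just before the statement) that $\con$ in $T(X)$ is the restriction of $\con$ in $P(X)$, after which Theorem~\ref{tcha} is read off from Theorem~\ref{tpcha} by noting that full transformations have no terminal vertices, hence no maximal left rays, chains, or components of type $\cho$, so conditions (4), (5a), (5b) are vacuous and (1)--(3) collapse to the stated trichotomy. Your version simply unpacks that specialization into an explicit component-by-component case analysis via Proposition~\ref{pspl}, Proposition~\ref{pcyc}, Lemmas~\ref{lcyc}, \ref{ldch}, \ref{lcom1} and Proposition~\ref{prro}, including the correct ad hoc argument that a double ray cannot map into a component of type $\rro$; this is the same machinery the paper uses inside the proof of Theorem~\ref{tpcha}.
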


\begin{example}\label{echa}
{\rm
Let $X=\{x_0,x_1,x_2,\ldots,y_1,y_2,y_3,\ldots\}$ and consider
\begin{align}
\al&=[x_0\,y_0\,x_1\,y_1\,x_2\,y_2\ldots\ran,\notag\\
\bt&=[x_0\,y_0\,x_1\,y_1\,x_2\,y_2\ldots\ran\jo[y_1\,y_2\,x_1]\jo[y_3\,y_4\,y_5\,y_6\,x_2]\jo[y_7\,y_8\,y_9\,y_{10}\,y_{11}\,y_{12}\,x_3]\jo\cdots\notag
\end{align}
in $T(X)$ (see Figure~\ref{f61}).
We will argue that $\al$ and $\bt$ are not conjugate.
Both $\al$ and $\bt$ are connected of type $\rro$. The only maximal right ray in $\al$
is $\eta=[x_0\,y_0\,x_1\,y_1\,x_2\,y_2\ldots\ran$ with $\lan \eta^\gam_n\ran=\lan n\ran$ (where $\gam=\al$). If $\al$ and $\bt$
were conjugate, then $\lan \eta_n\ran$ would dominate $\lan \xi^\del_n\ran$ (where $\del=\bt$)
for some maximal right ray $\xi$ in $\bt$, and so for all
maximal right rays $\xi$ in $\bt$ (see Lemma~\ref{lct5}). The right chain $\xi=[x_0\,y_0\,x_1\,y_1\,x_2\,y_2\ldots\ran$
is a maximal right chain in $\bt$ with $\lan \xi^\del_n\ran=\lan 2n\ran$. It is clear that the sequence $\lan n\ran$
does not dominate the sequence $\lan 2n\ran$. Hence, by Theorem~\ref{tcha}, $\al$ and $\bt$ are not conjugate.
}
\end{example}

\begin{figure}[h]
\[
\xy
(0,-5)*{\Gamma(\alpha)}="g";
(0,0)*{\bullet}="1";
(0,10)*{\bullet}="2";
(0,20)*{\bullet}="3";
(0,30)*{\bullet}="4";
(0,40)*{\bullet}="5";
(0,45)*{\vdots}="a";
"1";"2" **\crv{} ?>* \dir{};
"3";"2" **\crv{} ?>* \dir{};
"3";"4" **\crv{} ?>* \dir{};
"4";"5" **\crv{} ?>* \dir{};
(40,-5)*{\Gamma(\beta)}="g";
(20,0)*{\bullet}="6";
(20,10)*{\bullet}="7";
(20,20)*{\bullet}="8";
(20,30)*{\bullet}="9";
(20,40)*{\bullet}="10";
(20,45)*{\vdots}="a";
"6";"7" **\crv{} ?>* \dir{};
"8";"7" **\crv{} ?>* \dir{};
"8";"9" **\crv{} ?>* \dir{};
"9";"10" **\crv{} ?>* \dir{};
(30,0)*{\bullet}="11";
(40,0)*{\bullet}="12";
(50,0)*{\bullet}="13";
(60,0)*{\bullet}="14";
(25,5)*{\bullet}="11a";
(35,5)*{\bullet}="12a";
(45,5)*{\bullet}="13a";
(55,5)*{\bullet}="14a";
(30,10)*{\bullet}="12b";
(40,10)*{\bullet}="13b";
(50,10)*{\bullet}="14b";
(25,15)*{\bullet}="12c";
(35,15)*{\bullet}="13c";
(45,15)*{\bullet}="14c";
(30,20)*{\bullet}="13d";
(40,20)*{\bullet}="14d";
(25,25)*{\bullet}="13e";
(35,25)*{\bullet}="14e";
(30,30)*{\bullet}="14f";
(25,35)*{\bullet}="14g";
"6";"7" **\crv{} ?>* \dir{};
"7";"8" **\crv{} ?>* \dir{};
"8";"9" **\crv{} ?>* \dir{};
"9";"10" **\crv{} ?>* \dir{};
"11";"11a" **\crv{} ?>* \dir{};
"11a";"7" **\crv{} ?>* \dir{};
"12";"8" **\crv{} ?>* \dir{};
"13";"9" **\crv{} ?>* \dir{};
"14";"10" **\crv{} ?>* \dir{};
\endxy
\]
\caption{The digraphs of $\al$ and $\bt$ from Example~\ref{echa}.}\label{f61}
\end{figure}

If $X$ is a finite set, then every $\al\in T(X)$ has a cycle. Hence, Theorem~\ref{tcha} gives us the following corollary.

\begin{cor}\label{ccha}
Let $X$ be finite, and let $\al,\bt\in T(X)$. Then $\al\!\con\!\bt$ if and only if $\cs(\al)=\cs(\bt)$.
\end{cor}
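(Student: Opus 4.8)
The plan is to derive this corollary directly from Theorem~\ref{tcha} by showing that, when $X$ is finite, the only one of the three mutually exclusive conditions in that theorem which can ever apply is condition~(1). The single fact I need is that \emph{every} $\al\in T(X)$ has a cycle when $X$ is finite. First I would establish this: fix any $x\in X$ and consider the sequence $x,x\al,x\al^2,\ldots$, all of which lie in $\dom(\al)=X$ since $\al\in T(X)$. Because $X$ is finite, the pigeonhole principle forces $x\al^i=x\al^j$ for some $0\le i<j$, and then the elements $x\al^i,x\al^{i+1},\ldots,x\al^{j-1}$ form a cycle of length $j-i$ in $\al$. Hence $\al$ always has a cycle, so $\cs(\al)\ne\emptyset$.

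With this in hand, the rest is immediate. Since both $\al$ and $\bt$ have a cycle, conditions~(2) and~(3) of Theorem~\ref{tcha} are excluded outright, as each of them explicitly requires ``no cycles.'' Therefore, among the three conditions of Theorem~\ref{tcha}, the only one that can hold is~(1): namely, both $\al$ and $\bt$ have a cycle and $\cs(\al)=\cs(\bt)$. Since the cycle part is automatic, $\al\con\bt$ in $T(X)$ is equivalent to $\cs(\al)=\cs(\bt)$, which is exactly the assertion.

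There is no real obstacle here: the only genuine content beyond quoting Theorem~\ref{tcha} is the elementary pigeonhole observation that total self-maps of a finite set must contain a cycle, and the bookkeeping that this rules out the ray-based conditions~(2) and~(3). I would present the argument in this order---first the existence of a cycle, then the reduction to condition~(1)---keeping it to a few lines.
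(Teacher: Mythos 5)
Your proposal is correct and matches the paper's argument exactly: the paper likewise derives the corollary from Theorem~\ref{tcha} via the single observation that every full transformation of a finite set has a cycle, which forces condition~(1) to be the only applicable case. Your pigeonhole justification merely makes explicit what the paper leaves as an obvious remark.
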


Modifying the proof of Theorem~\ref{tpcc}, we can count the number of conjugacy classes in an infinite $T(X)$.

\begin{theorem}\label{ttcc}
Let $X$ be an infinite set with $|X|=\aep$. Then in $T(X)$ there are:
\begin{itemize}
  \item [\rm(1)] $2^{\ale_0}$ conjugacy classes consisting of transformations with a cycle, of which $\ale_0$
have a connected representative;
  \item [\rm(2)] one conjugacy class consisting of transformations with a double ray but not a cycle;
  \item [\rm(3)] $2^{\aep}$ conjugacy classes consisting of transformations without a cycle or a double ray, of which
at least $\aepp$ and at most $\aepp^{\ale_0}$ have a connected representative.
\end{itemize}
In total,
there are $2^{\aep}$ conjugacy classes in $T(X)$, of which
at least $\aepp$ and at most $\aepp^{\ale_0}$ have a connected representative.
\end{theorem}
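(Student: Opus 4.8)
The plan is to exploit the observation made just before Theorem~\ref{tcha}, that $\con$ on $T(X)$ is the restriction of $\con$ on $P(X)$, together with the simplified trichotomy furnished by Theorem~\ref{tcha}: a nonzero $\al\in T(X)$ has only components that are cycles, double rays, or of type $\rro$ (no maximal chains, maximal left rays, or type $\cho$ components, since $\Gamma(\al)$ has no terminal vertices). Every $\al\in T(X)$ thus falls into exactly one of the three mutually exclusive buckets of Theorem~\ref{tcha}: it has a cycle, or it has a double ray but no cycle, or all its components have type $\rro$; so the classes counted in (1), (2), (3) partition all conjugacy classes of $T(X)$. The whole argument then mirrors the proof of Theorem~\ref{tpcc}, but with the contributions of left rays and $\cho$-components deleted; this is precisely what collapses the $\max\{2^{\ale_0},\aepp\}$ of Theorem~\ref{tpcc}(1) down to $2^{\ale_0}$ here.

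For part (1), I would argue that by Theorem~\ref{tcha}(1) together with Lemma~\ref{lcyc} (which lets every double ray or $\rro$-component of a transformation possessing a cycle map into that cycle) the class of any $\al\in T(X)$ with a cycle is determined solely by $\cs(\al)$. Hence these classes biject with the attainable cycle sets, of which there are at most $|\mathcal{P}(\mathbb{Z}_+)|=2^{\ale_0}$, and at least that many are realized by the totalisations of the transformations $\bt_{\tq}=\join_{q\in Q}\thet_q$ ($\emptyset\neq Q$ a set of primes) from the proof of Theorem~\ref{tpcc}(1); so the count is $2^{\ale_0}$. A class has a connected representative exactly when $\cs$ is a singleton (a connected transformation has a unique cycle by Proposition~\ref{pcom}), and each $\{k\}$ is realized by a cycle of length $k$ with all remaining points of $X$ routed into it, giving $\ale_0$ connected classes. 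Part (2) is then immediate from Theorem~\ref{tcha}(2): any two transformations with a double ray and no cycle are conjugate, and any such transformation lies in neither bucket (1) nor bucket (3), so they form a single (nonempty) class.

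For part (3) I would transcribe the treatment of the set $B$ in the proof of Theorem~\ref{tpcc}(2). The upper bound is $|T(X)|=\aep^{\aep}=2^{\aep}$. The connected classes biject with $\iso^{\approx}$ via $[\gam]_c\mapsto[\lan\eta^\gam_n\ran]_\approx$, which is well defined and injective by Theorem~\ref{tcha}(3) and Lemma~\ref{lct5}, and surjective by Lemma~\ref{lct2a} (whose output already lies in $T(X)$ and is connected, since it routes all of $X$ into a single right ray); Lemma~\ref{lct4} then gives $\aepp\leq|\iso^{\approx}|\leq\aepp^{\ale_0}$. Assembling (1)--(3), the total number of classes is $2^{\ale_0}+1+2^{\aep}=2^{\aep}$, matching the upper bound $|T(X)|=2^{\aep}$, while the number with a connected representative is $\ale_0+1+|\iso^{\approx}|$, which equals $|\iso^{\approx}|\in[\aepp,\aepp^{\ale_0}]$ because $\aepp>\ale_0$.

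The main obstacle is the lower bound $|B|\geq 2^{\aep}$ in part (3), where the $\al_{\tk}=\join_{\mu\in K}\gam'_\mu$ construction of Theorem~\ref{tpcc}(2) must be kept inside $T(X)$. I would split $X$ into $\aep$ full-size blocks $\{X_\mu\}_{\mu<\oep}$ and place on each a copy of a connected type-$\rro$ transformation drawn from a \emph{domination-antichain} of size $\aep$, so that by Theorem~\ref{tcha}(3) distinct subsets $K$ yield non-conjugate joins; to recover totality I would route each block $X_\mu$ with $\mu\notin K$ through a fixed, domination-minimal right ray, chosen so that this padding adds no new maximal component and leaves untouched the domination data that separates the $2^{\aep}$ subsets. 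Verifying that such an antichain exists in $\iso^{\approx}$ and that the totalising routing preserves all the relevant domination comparisons is the one genuinely delicate point; the remaining verifications are direct specialisations of the $P(X)$ arguments.
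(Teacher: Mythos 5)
Your proposal follows essentially the same route as the paper, which itself presents this theorem only as a modification of the proof of Theorem~\ref{tpcc}: you use Theorem~\ref{tcha} to reduce (1) to cycle sets, (2) to a single class, and (3) to the $\iso^\approx$ bijection and the $\al_{\tk}=\join_{\mu\in K}\gam'_\mu$ construction. You also correctly isolate the one point the paper leaves implicit --- keeping $\al_{\tk}$ total in $T(X)$ --- and your padding device (absorbing the unused blocks without disturbing the maximal right rays or their rank sequences) is a sound way to handle it.
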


The reason for (1) is that $\al\in T(X)$ does not have any maximal left rays or components of type $\cho$.
Thus, the set $A=A'\cup A''$ from the proof of (1) of Theorem~\ref{tpcc} reduces to $A'$,
and the function $f':A'\to\mathcal{P}(\mathbb Z_+)\times(\oepp+1)\times\{0,1\}$ reduces to $f':A'\to\mathcal{P}(\mathbb Z_+)$.
The reason for (2) is that if $\al\in T(X)$ has a double ray but not a cycle, then each component of $\al$
either has a double ray or is of type $\rro$. Any two such transformations are then conjugate by Lemma~\ref{ldch}
and Proposition~\ref{pspl}.

\section{Conjugacy in $\gx$}\label{sgx}
\setcounter{equation}{0}
\setcounter{figure}{0}

By $\gx$ we mean the subsemigroup of $T(X)$ consisting of injective transformations.
If $X$ is finite, then $\gx=\sym(X)$ but this is not the case for an infinite $X$.
The semigroup $\gx$ is universal for right cancellative semigroups with no idempotents
(except possibly the identity): that is, any such semigroup can be embedded in $\gx$ for some $X$ \cite[Lemma~1.0]{ClPr64}.
The semigroup $\gx$ has been studied mainly in the context of:
ideals and congruences \cite{LeSc89,Sut61};
$\mathcal{G}(X)$-normal semigroups \cite{Le94,Le01,Su80};
Baer-Levi semigroups \cite{LeSu83,LiMa76}; $\mathit{BQ}$-semigroups \cite{Ke02,Su09},
and centralizers \cite{Ko10,Ko11}.
In this section, we characterize the conjugacy $\con$ in $\gx$ for an arbitrary set $X$.

We note that every connected transformation in $P(X)$ that is also injective is a cycle, a ray, or
a chain. Since transformations in $\gx$ are full, $\al\in\gx$ cannot contain a maximal left ray
or a maximal chain. These observations give the following lemma.

\begin{lemma}\label{lgxcon}
Let $\al\in\gx$. Then every connected component of $\al$ is a right ray, a double ray, or a cycle.
\end{lemma}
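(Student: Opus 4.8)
The plan is to combine the observation recorded just before the statement---that every connected \emph{injective} element of $P(X)$ is a cycle, a ray, or a chain---with the fact that a full transformation has no terminal vertices, so as to rule out precisely the two types of component that carry a terminal vertex, namely chains and left rays. Thus the whole argument reduces to eliminating these two cases.

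First I would fix an arbitrary connected component $\gam$ of $\al$; such a decomposition exists and is unique by Proposition~\ref{pdec}. Since $\gam\sqs\al$ and $\al$ is injective, the restriction $\gam$ is again an injective, connected element of $P(X)$, so by the preceding observation $\gam$ is a cycle, a right ray, a double ray, a left ray, or a chain. Of these five types, only the left ray $\lan\ldots\,x_2\,x_1\,x_0]$ and the chain $[x_0\,x_1\ldots\,x_k]$ possess a terminal vertex (its $x_0$, respectively its $x_k$); the remaining three types have none. It therefore suffices to show that $\gam$ cannot have a terminal vertex.

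The key step is to promote a terminal vertex of the component to a terminal vertex of $\al$. Writing $\al=\join_{\del\in C}\del$ as in Proposition~\ref{pdec}, suppose $\gam$ had a terminal vertex $r$, so that $r\in\spa(\gam)$ but $r\notin\dom(\gam)$. If $r\in\dom(\al)$, then $r\in\dom(\del)$ for some component $\del\in C$, whence $r\in\spa(\gam)\cap\spa(\del)$; since distinct connected components are completely disjoint, this forces $\del=\gam$ and hence $r\in\dom(\gam)$, a contradiction. Thus $r\notin\dom(\al)$, i.e.\ $r$ is a terminal vertex of $\Gamma(\al)$. But $\al\in\gx\leq T(X)$ is full, so $\dom(\al)=X$ and every $x\in X$ satisfies $x\ara x\al$; hence $\Gamma(\al)$ has no terminal vertices at all. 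This contradiction excludes chains and left rays, leaving $\gam$ a cycle, a right ray, or a double ray, which is the assertion.

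I do not expect a genuine obstacle here, since both inputs are already supplied in the text. The only point demanding any care is the terminal-vertex argument of the previous paragraph, and even that is routine: it rests solely on the complete disjointness of components guaranteed by Proposition~\ref{pdec} together with the defining property $\dom(\al)=X$ of a full transformation.
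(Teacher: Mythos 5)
Your proposal is correct and follows the same route as the paper, which proves the lemma by exactly the two observations you cite: a connected injective element of $P(X)$ is a cycle, a ray, or a chain, and a full transformation has no terminal vertices, so maximal left rays and maximal chains are excluded. Your only addition is to spell out the (routine) step that a terminal vertex of a component is a terminal vertex of $\al$, via the complete disjointness of components from Proposition~\ref{pdec}, which the paper leaves implicit.
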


The following proposition follows from Lemma~\ref{lgxcon} and Proposition~\ref{pdec}.

\begin{prop}\label{pgxuni}
Let $\al\in\gx$. Then there exist unique sets: $A$ of right rays,
$B$ of double rays, and $C$ of cycles such that
the transformations in $A\cup B\cup C$ are pairwise completely disjoint and
\begin{equation*}
\al=\left(\join_{\eta\in A}\eta\right)\jo\left(\join_{\omega\in B}\omega\right)\jo\left(\join_{\thet\in C}\thet\right).\label{euni}
\end{equation*}
\end{prop}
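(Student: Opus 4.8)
The plan is to apply Proposition~\ref{pdec} to $\al$ and then show that each connected component is forced to be one of the three allowed types by injectivity. Since $\al\in\gx$, in particular $\al\ne0$ (as $\al$ is full on a nonempty $X$), so Proposition~\ref{pdec} yields a unique set $C_0$ of pairwise completely disjoint, connected transformations contained in $\al$ with $\al=\join_{\gam\in C_0}\gam$. Each such $\gam$ is contained in $\al$, hence is itself injective (a restriction of an injective map is injective) and full on its span in the appropriate sense; moreover $\gam\ne0$. By Lemma~\ref{lgxcon}, every connected component of $\al$ is a right ray, a double ray, or a cycle. This is exactly the trichotomy we need, so the components of $C_0$ partition into three subsets according to their type.

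Next I would define $A=\{\gam\in C_0:\gam\text{ is a right ray}\}$, $B=\{\gam\in C_0:\gam\text{ is a double ray}\}$, and $C=\{\gam\in C_0:\gam\text{ is a cycle}\}$. Since every element of $C_0$ falls into exactly one of these classes (a right ray, double ray, and cycle are pairwise distinct as digraph types — a cycle has a vertex returning to itself, a double ray has no terminal vertex and no cycle, a right ray has a unique source), the sets $A,B,C$ are disjoint and $A\cup B\cup C=C_0$. The transformations in $A\cup B\cup C$ are pairwise completely disjoint because they are precisely the elements of $C_0$, which Proposition~\ref{pdec} guarantees are pairwise completely disjoint. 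Then the displayed decomposition follows directly from $\al=\join_{\gam\in C_0}\gam$ by splitting the join over the three blocks of the partition: $\al=\left(\join_{\eta\in A}\eta\right)\jo\left(\join_{\omega\in B}\omega\right)\jo\left(\join_{\thet\in C}\thet\right)$, using that the join of pairwise disjoint transformations is associative and can be grouped arbitrarily.

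For uniqueness, I would leverage the uniqueness already supplied by Proposition~\ref{pdec}. Suppose $A',B',C'$ is another triple of sets (of right rays, double rays, cycles respectively) with the completely-disjoint property and the same join equal to $\al$. Then $A'\cup B'\cup C'$ is a set of pairwise completely disjoint, connected transformations contained in $\al$ whose join is $\al$, so by the uniqueness clause of Proposition~\ref{pdec}, $A'\cup B'\cup C'=C_0=A\cup B\cup C$. Since the type of each connected component (right ray vs.\ double ray vs.\ cycle) is an intrinsic property of the transformation, the partition by type is forced, giving $A'=A$, $B'=B$, $C'=C$. The main (minor) obstacle is simply checking that the three basic types are genuinely mutually exclusive as connected injective transformations, so that the type-partition is well defined; this is immediate from Lemma~\ref{lgxcon} together with the observation that a cycle is the only one of the three with no terminal vertex \emph{and} a periodic point, a double ray is the only bi-infinite one with no periodic point, and a right ray is the only one possessing a vertex not in its image.
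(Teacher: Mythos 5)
Your proposal is correct and follows exactly the route the paper takes: the paper derives this proposition directly from Proposition~\ref{pdec} (unique decomposition into pairwise completely disjoint connected components) together with Lemma~\ref{lgxcon} (each component of an injective full transformation is a right ray, double ray, or cycle), which is precisely your argument. You have merely spelled out the details (the type-partition of the component set and the transfer of uniqueness) that the paper leaves implicit.
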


Let $\al\in\gx$.
We will denote the unique sets $A$, $B$, and $C$ from Proposition~\ref{pgxuni} by $\aal$, $\ba$, and $\ca$, respectively.
For $n\geq1$, we will denote by $\cna$ the subset of $\ca$ consisting of cycles of length $n$. Note that:
\begin{align}
\aal&=\mbox{the set of maximal right rays contained in $\al$},\notag\\
\ba&=\mbox{the set of double rays contained in $\al$},\notag\\
\ca&=\mbox{the set of cycles contained in $\al$}.\notag
\end{align}

For $\eta=[x_0\, x_1\, x_2\ldots\ran$, $\omega=\lan\ldots x_{-1}\, x_0\, x_1\ldots\ran$,
$\thet=(x_0\, x_1\ldots x_{k-1})$, and any $\phi$ in $\gx$, we define:
\[
\eta\phi^*=[x_0\phi\,\, x_1\phi\,\, x_2\phi\,\ldots\ran,\,\,
\omega\phi^*=\lan\ldots x_{-1}\phi\,\, x_0\phi\,\, x_1\phi\ldots\ran,\,\,
\thet\phi^*=(x_0\phi\,\, x_1\phi\ldots x_{k-1}\phi).
\]

\begin{prop}\label{pgxhom}
Let $\al,\bt,\phi\in\gx$. Then $\phi$ is a homomorphism from $\Gamma(\al)$ to $\Gamma(\bt)$ if and only if for all $\eta\in\aal$,
$\omega\in\ba$, and $\thet\in\ca$:
\begin{itemize}
  \item[\rm(1)] either there is a unique $\eta_1\in\ab$ such that $\eta\phi^*\sqs\eta_1$
or there is a unique $\omega_1\in\bb$ such that $\eta\phi^*\sqs\omega_1$;
  \item[\rm(2)] $\omega\phi^*\in\bb$ and $\thet\phi^*\in\cb$.
\end{itemize}
\end{prop}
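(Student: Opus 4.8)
The plan is to reduce the digraph-homomorphism condition to a single pointwise identity and then read it off one connected component at a time, using the canonical decomposition of Proposition~\ref{pgxuni} and the component trichotomy of Lemma~\ref{lgxcon}. Since $\al\in\gx\leq T(X)$ is full, the arcs of $\Gamma(\al)$ are exactly the pairs $(x,x\al)$, so $\phi$ is a homomorphism from $\Gamma(\al)$ to $\Gamma(\bt)$ precisely when $(x\phi)\bt=(x\al)\phi$ for every $x\in X$, that is, when $\al\phi=\phi\bt$. Every $x\in X=\spa(\al)$ lies in the span of exactly one connected component of $\al$, and by Proposition~\ref{pgxuni} that component is a right ray in $\aal$, a double ray in $\ba$, or a cycle in $\ca$; hence the homomorphism condition holds if and only if $(x\phi)\bt=(x\al)\phi$ holds along each such component.

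For the forward direction I would push $\phi$ along the consecutive arrows of each basic component and invoke injectivity of $\phi$ to keep the images distinct. For a cycle $\thet=(x_0\,x_1\ldots x_{k-1})\in\ca$ the preserved arrows give $x_0\phi\arb x_1\phi\arb\cdots\arb x_{k-1}\phi\arb x_0\phi$ with the $x_i\phi$ distinct, so $\thet\phi^*$ is a length-$k$ cycle contained in $\bt$; since $\bt$ is injective nothing else maps into it, so it is a full component and $\thet\phi^*\in\cb$. The same argument gives $\omega\phi^*\in\bb$ for $\omega\in\ba$, establishing~(2). For a maximal right ray $\eta=[x_0\,x_1\ldots\ran\in\aal$ the arrows yield a right ray $\eta\phi^*=[x_0\phi\,x_1\phi\ldots\ran\sqs\bt$ with infinitely many distinct vertices. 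Being connected, $\spa(\eta\phi^*)$ lies inside a single connected component $\del$ of $\bt$, which by Lemma~\ref{lgxcon} is a ray or a cycle; it cannot be a cycle, as $\spa(\eta\phi^*)$ is infinite. Because the components are pairwise completely disjoint, $\del$ is the unique member of $\ab\cup\bb$ whose span meets $\{x_0\phi,x_1\phi,\ldots\}$, and since $\dom=\spa$ for rays we get $\eta\phi^*\sqs\del$; this is exactly condition~(1).

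For the converse I assume (1) and (2) and check $(x\phi)\bt=(x\al)\phi$ according to the component containing $x$. If $x=x_i$ lies on a cycle $\thet\in\ca$ or a double ray $\omega\in\ba$, then by~(2) we have $\thet\phi^*\in\cb$, respectively $\omega\phi^*\in\bb$, so $x_i\phi\arb x_{i+1}\phi$ in $\bt$ and hence $(x\phi)\bt=x_{i+1}\phi=(x\al)\phi$. If $x=x_i$ lies on a right ray $\eta\in\aal$, condition~(1) furnishes a component $\del\in\ab\cup\bb$ with $\eta\phi^*\sqs\del$; as $\del\sqs\bt$ and $\sqs$ is transitive, $\eta\phi^*\sqs\bt$, and since $x_i\phi\in\dom(\eta\phi^*)$ we again obtain $(x_i\phi)\bt=(x_i\phi)(\eta\phi^*)=x_{i+1}\phi=(x\al)\phi$. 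Thus $\al\phi=\phi\bt$, so $\phi$ is a homomorphism.

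The crux is the right-ray case of the forward direction: one must rule out the forward $\bt$-orbit of $x_0\phi$ closing up into a cycle or straddling several components, and then isolate the unique ray or double ray of $\bt$ absorbing $\eta\phi^*$. Injectivity of $\phi$ (forcing infinitely many distinct images), the trichotomy of Lemma~\ref{lgxcon}, and the complete disjointness of components from Proposition~\ref{pgxuni} are exactly what close this gap; the cycle and double-ray cases are routine in comparison.
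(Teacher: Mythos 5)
Your proof is correct and takes essentially the same approach as the paper: injectivity of $\phi$ forces the image of each basic component to be a cycle, double ray, or right ray of $\bt$, connectivity confines that image to a single connected component (ruled out from being a cycle in the right-ray case), complete disjointness of the components gives uniqueness, and the converse is a routine pointwise verification. The only cosmetic difference is that you first repackage the homomorphism condition as $\al\phi=\phi\bt$, whereas the paper checks arc preservation directly and cites the proof of Proposition~\ref{pspl} for the single-component claim.
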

\begin{proof} Suppose $\phi$ is a homomorphism from $\Gamma(\al)$ to $\Gamma(\bt)$.
Let $\eta=[x_0\,x_1\,x_2\ldots\ran\in\aal$. Then, since $\phi$ is an injective homomorphism,
$\eta\phi^*=[x_0\phi\,\,x_1\phi\,\,x_2\phi\,\ldots\ran$ is a right ray in $\Gamma(\bt)$.
By the proof of Proposition~\ref{pspl}, $\phi|_{\spa(\eta)}$ is a homomorphism from $\Gamma(\eta)$
to $\Gamma(\gam)$ for some connected component $\gam$ of $\bt$. By Lemma~\ref{lgxcon}, either $\gam=\eta_1=(y_0\,y_1\,y_2\ldots\ran$
is a right ray in $\bt$ or $\gam=\omega_1=\lan\ldots y_{-1}\,y_0\,y_1\ldots\ran$ is a double ray in $\bt$ ($\gam$ cannot be a cycle since
$\phi$ is injective). In the former case, $\eta\phi^*\sqs\eta_1$,
and in the latter case, $\eta\phi^*\sqs\omega_1$. The uniqueness of $\eta_1$ and $\omega_1$
follows from the fact that the elements of $\ab\cup\bb$ are pairwise completely disjoint.
We have proved (1). The proof of (2) is similar.

Conversely, suppose that $\phi$ satisfies (1) and (2). Then it follows immediately that
for all $x,y\in X$, $x\ara y$ implies $x\phi\arb y\phi$, and so $\phi$ is a homomorphism from $\Gamma(\al)$ to $\Gamma(\bt)$.
\end{proof}

\begin{defi}\label{dgxh}
{\rm
Let $\al,\bt\in\gx$. For a homomorphism $\phi\in\gx$ from $\Gamma(\al)$ to $\Gamma(\bt)$,
we define a mapping $\hph:\aal\cup\ba\cup\ca\to\ab\cup\bb\cup\cb$ by:
\[
\delta\hph=
\left\{\begin{array}{lll}
\eta & \mbox{if $\delta\in\aal$ and $\delta\phi^*\sqs\eta$ for some $\eta\in\ab$,}\\
\omega & \mbox{if $\delta\in\aal$ and $\delta\phi^*\sqs\omega$ for some $\omega\in\bb$,}\\
\delta\phi^* & \mbox{if $\delta\in\ba\cup\ca$.}
\end{array}\right.
\]
Note that $\hph$ is well defined (by Proposition~\ref{pgxhom}) and injective (since $\phi$ is injective).
}
\end{defi}

We will need the following lemma from set theory (whose proof is straightforward).
\begin{lemma}\label{lgxset}
Let $A_1$, $B_1$, $A_2$, and $B_2$ be sets such that
$A_1\cap B_1=\emptyset$, $A_2\cap B_2=\emptyset$, $|A_1|+|B_1|\leq|A_2|+|B_2|$, and $|B_1|\leq|B_2|$.
Then there is an injective mapping $f:A_1\cup B_1\to A_2\cup B_2$ such that
$xf\in B_2$ for every $x\in B_1$.
\end{lemma}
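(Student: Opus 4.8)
The plan is to define $f$ in two stages: first embed $B_1$ into $B_2$, and then embed $A_1$ into whatever part of $A_2\cup B_2$ remains. Since $|B_1|\le|B_2|$ an injection of $B_1$ into $B_2$ certainly exists; the only delicate point is that the image of $B_1$ must be chosen to leave a complement large enough to absorb $A_1$. If one carelessly picks a \emph{bijection} $B_1\to B_2$ in the case $|B_1|=|B_2|$, then nothing of $B_2$ is left over and the extension to $A_1$ can fail (for instance when $A_1\neq\emptyset$ but $A_2=\emptyset$). This is the one genuine obstacle, and it is resolved by infinite cardinal arithmetic.

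Concretely, I would split $B_2$ as a disjoint union $B_2=S\cup S'$ with $|S|=|B_1|$, choosing $S'$ as large as possible. When $B_2$ is infinite, the identity $|B_1|+|B_2|=|B_2|$ (valid because $|B_1|\le|B_2|$ and $|B_2|$ is infinite) lets me take $|S|=|B_1|$ and $|S'|=|B_2|$. When $B_2$ is finite, then $B_1$ is finite too and I take $|S|=|B_1|$ and $|S'|=|B_2|-|B_1|$. In either case I fix a bijection $g\colon B_1\to S$ and set $f|_{B_1}=g$, so that $xf\in S\subseteq B_2$ for every $x\in B_1$, as required.

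It remains to extend $f$ to $A_1$ by an injection into the complement $(A_2\cup B_2)\setminus S=A_2\cup S'$; since $A_2\cap B_2=\emptyset$ this set has cardinality $|A_2|+|S'|$, so such an injection exists exactly when $|A_1|\le|A_2|+|S'|$. In the infinite case this inequality reads $|A_1|\le|A_2|+|B_2|$, which follows from $|A_1|\le|A_1|+|B_1|\le|A_2|+|B_2|$. In the finite case it reads $|A_1|\le|A_2|+(|B_2|-|B_1|)$, which I would deduce from the hypothesis $|A_1|+|B_1|\le|A_2|+|B_2|$ by a short sub-case analysis: if $A_1$ is infinite, then since $|B_1|$ is finite we have $|A_1|=|A_1|+|B_1|\le|A_2|+|B_2|$, which forces $A_2$ to be infinite and hence $|A_1|\le|A_2|=|A_2|+(|B_2|-|B_1|)$; if $A_1$ is finite, the inequality is either trivial (when $A_2$ is infinite) or ordinary finite arithmetic (when $A_2$ is finite). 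Taking $f$ to be $g$ on $B_1$ and this second injection on $A_1$ then yields an injective map $f\colon A_1\cup B_1\to A_2\cup B_2$ with $f(B_1)\subseteq B_2$, which completes the proof.
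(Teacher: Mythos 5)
Your proof is correct. The paper offers no argument to compare against — it introduces this lemma with the remark that its proof is straightforward and omits it entirely — so your write-up simply supplies the standard construction, and it does so accurately: you correctly identify the one delicate point (when $|B_1|=|B_2|$ one must not exhaust $B_2$ with the image of $B_1$), resolve it by splitting $B_2=S\cup S'$ with $|S|=|B_1|$ and $S'$ as large as possible, and your case analysis verifying $|A_1|\leq|A_2|+|S'|$ in both the infinite and finite situations is sound.
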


We can now characterize the conjugacy $\con$ in $\gx$.

\begin{theorem}\label{tgxcha}
Let $\al,\bt\in\gx$. Then $\al\!\con\!\bt$ in $\gx$ if and only if
$|\aal|+|\ba|=|\ab|+|\bb|$, $|\ba|=|\bb|$, and $|\cna|=|\cnb|$ for every $n\geq1$.
\end{theorem}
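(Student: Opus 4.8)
The plan is to first reduce the conjugacy $\con$ in $\gx$ to the existence of graph homomorphisms in both directions, and then translate this into the stated cardinality conditions via Proposition~\ref{pgxhom}. Since $\gx$ has no zero and is a monoid (with identity $\id_X$, so that $\gx^1=\gx$), Theorem~\ref{tcon}(2b) gives $\con\,=\,\sim_{\!o}$ on $\gx$; hence $\al\con\bt$ if and only if there are $\phi,\psi\in\gx$ with $\al\phi=\phi\bt$ and $\bt\psi=\psi\al$. Because every element of $\gx$ is full, a direct check shows that for $\phi\in\gx$ the equality $\al\phi=\phi\bt$ holds if and only if $\phi$ is a homomorphism from $\Gamma(\al)$ to $\Gamma(\bt)$: evaluating $\al\phi=\phi\bt$ at an arbitrary $x\in X$ gives exactly $(x\al)\phi=(x\phi)\bt$, which is the homomorphism condition on the arc $x\ara x\al$. (Corollary~\ref{cconh} does not apply directly here, since $\gx$ does not contain the constants.) Thus $\al\con\bt$ in $\gx$ if and only if there exist homomorphisms $\phi\in\gx$ from $\Gamma(\al)$ to $\Gamma(\bt)$ and $\psi\in\gx$ from $\Gamma(\bt)$ to $\Gamma(\al)$.

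For the forward implication I would take such a $\phi$ and pass to the injective map $\hph\colon\aal\cup\ba\cup\ca\to\ab\cup\bb\cup\cb$ of Definition~\ref{dgxh}. By Proposition~\ref{pgxhom} and the injectivity of $\phi$, the map $\hph$ carries $\ba$ into $\bb$, carries each cycle in $\cna$ to a cycle of the same length in $\cnb$ (so $\cna$ into $\cnb$), and carries $\aal$ into $\ab\cup\bb$. Since $\hph$ is injective and $\aal\cap\ba=\emptyset=\ab\cap\bb$, restricting it yields $|\aal|+|\ba|\le|\ab|+|\bb|$, $|\ba|\le|\bb|$, and $|\cna|\le|\cnb|$ for every $n\ge1$. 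Applying the same argument to $\psi$ gives the reverse inequalities, and by the Cantor--Schr\"oder--Bernstein theorem the three inequalities become the desired equalities.

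For the converse I would construct $\phi$ explicitly from the cardinality data; a symmetric construction then yields $\psi$. On the cycles, $|\cna|=|\cnb|$ provides a length-preserving bijection $\ca\to\cb$, and I would let $\phi$ map each cycle of $\al$ bijectively onto its image. For the rays, the key tool is Lemma~\ref{lgxset} applied with $A_1=\aal$, $B_1=\ba$, $A_2=\ab$, $B_2=\bb$: the hypotheses $|\aal|+|\ba|\le|\ab|+|\bb|$ and $|\ba|\le|\bb|$ hold (indeed with equality), so there is an injection $f\colon\aal\cup\ba\to\ab\cup\bb$ with $f(\ba)\subseteq\bb$. I would then define $\phi$ on each double ray $\omega\in\ba$ so that $\omega\phi^*=f(\omega)\in\bb$, and on each right ray $\eta\in\aal$ so that $\eta\phi^*$ is contained in $f(\eta)$ (equal to it if $f(\eta)\in\ab$, or sitting inside the right-hand tail of $f(\eta)$ if $f(\eta)\in\bb$). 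Since $\al$ is full, $X=\spa(\al)$ is the disjoint union of the spans of its components, so $\phi$ is defined on all of $X$; injectivity of $f$ together with the complete disjointness of the components of $\bt$ (Proposition~\ref{pgxuni}) guarantees that the images are pairwise disjoint, so $\phi\in\gx$. By construction $\phi$ satisfies conditions (1) and (2) of Proposition~\ref{pgxhom}, hence is a homomorphism, i.e.\ $\al\phi=\phi\bt$.

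The main obstacle is the bookkeeping in this last construction, and specifically the fact that a right ray of $\al$ is allowed to map into a \emph{double} ray of $\bt$ (and vice versa), while double rays must map only to double rays. This asymmetry is exactly why the two separate equalities $|\aal|+|\ba|=|\ab|+|\bb|$ and $|\ba|=|\bb|$ are the correct conditions rather than $|\aal|=|\ab|$ and $|\ba|=|\bb|$, and it is precisely the situation handled by Lemma~\ref{lgxset}; the remaining care is in verifying that the resulting $\phi$ stays injective on all of $X$, which follows from the complete disjointness of the pieces.
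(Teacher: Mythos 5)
Your proof is correct and follows essentially the same route as the paper's: reduce $\con$ in $\gx$ to the existence of injective graph homomorphisms in both directions, use the injectivity of $\hph$ (Definition~\ref{dgxh}) together with Proposition~\ref{pgxhom} and Cantor--Schr\"oder--Bernstein for the forward implication, and build $\phi$ from Lemma~\ref{lgxset} plus length-preserving bijections on cycles for the converse. Your one deviation is an improvement: the paper invokes Corollary~\ref{cconh}, whose hypothesis that $S$ contain all constants fails for $\gx$, whereas you correctly derive the equivalence ``$\al\phi=\phi\bt$ iff $\phi$ is a homomorphism from $\Gamma(\al)$ to $\Gamma(\bt)$'' directly from fullness and the fact that $\con\,=\,\sim_{\!o}$ on the zero-free monoid $\gx$.
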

\begin{proof}
Suppose $\al\!\con\!\bt$ in $\gx$. Then, by Corollary~\ref{cconh}, there is
$\phi\in\gx$ such that $\phi$ is a homomorphism from $\Gamma(\al)$ to $\Gamma(\bt)$.
Define $f:\aal\cup\ba\to\ab\cup\bb$ by $\del f=\del\hph$. (By the definitions of $\hph$ and $\phi^*$,
$\del f$ is indeed in $\ab\cup\bb$ if $\del\in\aal\cup\ba$.) The mapping $f$ is injective (since $\hph$ is injective),
$\aal\cap\ba=\emptyset$, and $\ab\cap\bb=\emptyset$. Thus
\[
|\aal|+|\ba|=|\aal\cup\ba|\leq|\ab\cup\bb|=|\ab|+|\bb|.
\]
Similarly, $|\ba|\leq|\bb|$
since $g:\ba\to\bb$ defined by $\omega g=\omega\hph$ is well defined and injective. Let $n\geq1$.
Define $h:\cna\to\cnb$ by $\thet h=\thet\hph$.
(If $\thet=(x_0\ldots\,x_{n-1})\in\cna$, then $\thet\hph=\thet\phi^*=(x_0\phi\ldots\,x_{n-1}\phi)\in\cnb$.)
The mapping $h$ is injective, and so $|\cna|\leq|\cnb|$. By symmetry, $|\ab|+|\bb|\leq|\aal|+|\ba|$, $|\bb|\leq|\ba|$,
and $|\cnb|\leq|\cna|$. Hence the stated equalities hold.

Conversely, suppose $|\aal|+|\ba|=|\ab|+|\bb|$, $|\ba|=|\bb|$, and $|\cna|=\cnb|$ for every $n\geq1$.
We will define an injective homomorphism $\phi$ from $\Gamma(\al)$ to $\Gamma(\bt)$. By Lemma~\ref{lgxset},
there is an injective mapping $f:\aal\cup\ba\to\ab\cup\bb$ such that $\omega f\in\bb$ for every
$\omega\in\ba$. For every $n\geq1$, fix a bijection $g_n:\cna\to\cnb$. Let $n\geq1$. For all $\eta\in\aal$, $\omega\in\ba$,
and $\thet\in\cna$, we define $\phi$ on $\dom(\eta)\cup\dom(\omega)\cup\dom(\thet)$
in such a way that $\eta\phi^*\sqs\eta f$, $\omega\phi^*=\omega f$, and $\thet\phi^*=\thet g_n$.
Note that this defines $\phi$ for every $x\in X$. By the definition of $\phi$ and Proposition~\ref{pgxhom},
$\phi\in\gx$ and $\phi$ is a homomorphism from $\Gamma(\al)$ to $\Gamma(\bt)$.
By symmetry, there is an injective homomorphism $\psi$ from $\Gamma(\bt)$ to $\Gamma(\al)$.
Hence $\al\!\con\!\bt$ by Corollary~\ref{cconh}.
\end{proof}

\begin{example}\label{egxcha}
{\rm
Let $\al$ and $\bt$ be partial transformations on an infinite set whose digraphs are presented in Figures~\ref{f81} and~\ref{f82},
respectively. Then $|\aal|=1$, $|\ba|=\ale_0$, $|\aal|+|\ba|=\ale_0$, and $|\cna|=0$ for every $n\geq1$.
Also, $|\ab|=2$, $|\bb|=\ale_0$,
$|\ab|+|\bb|=\ale_0$, and $|\cnb|=0$ for every $n\geq1$.
Thus $\al\!\con\!\bt$ by Theorem~\ref{tgxcha}.
}
\end{example}

\begin{figure}[h]
\[
\xy
(0,0)*{\bullet}="1";
(0,10)*{\bullet}="2";
(0,20)*{\bullet}="3";
(0,30)*{\bullet}="4";
(0,35)*{\vdots}="b";
"1";"2" **\crv{} ?>* \dir{};
"2";"3" **\crv{} ?>* \dir{};
"3";"4" **\crv{} ?>* \dir{};
(10,-3)*{\vdots}="a";
(10,0)*{\bullet}="5";
(10,10)*{\bullet}="6";
(10,20)*{\bullet}="7";
(10,30)*{\bullet}="8";
(10,35)*{\vdots}="c";
"5";"6" **\crv{} ?>* \dir{};
"6";"7" **\crv{} ?>* \dir{};
"7";"8" **\crv{} ?>* \dir{};
(20,-3)*{\vdots}="a";
(20,0)*{\bullet}="9";
(20,10)*{\bullet}="10";
(20,20)*{\bullet}="11";
(20,30)*{\bullet}="12";
(20,35)*{\vdots}="c";
"9";"10" **\crv{} ?>* \dir{};
"10";"11" **\crv{} ?>* \dir{};
"11";"12" **\crv{} ?>* \dir{};
(30,-3)*{\vdots}="a";
(30,0)*{\bullet}="13";
(30,10)*{\bullet}="14";
(30,20)*{\bullet}="15";
(30,30)*{\bullet}="16";
(30,35)*{\vdots}="c";
(35,20)*{\cdots}="c";
"13";"14" **\crv{} ?>* \dir{};
"14";"15" **\crv{} ?>* \dir{};
"15";"16" **\crv{} ?>* \dir{};
\endxy
\]
\caption{The digraph of $\al$ from Example~\ref{egxcha}.}\label{f81}
\end{figure}

\begin{figure}[h]
\[
\xy
(-10,0)*{\bullet}="01";
(-10,10)*{\bullet}="02";
(-10,20)*{\bullet}="03";
(-10,30)*{\bullet}="04";
(-10,35)*{\vdots}="b";
"01";"02" **\crv{} ?>* \dir{};
"02";"03" **\crv{} ?>* \dir{};
"03";"04" **\crv{} ?>* \dir{};
(0,0)*{\bullet}="1";
(0,10)*{\bullet}="2";
(0,20)*{\bullet}="3";
(0,30)*{\bullet}="4";
(0,35)*{\vdots}="b";
"1";"2" **\crv{} ?>* \dir{};
"2";"3" **\crv{} ?>* \dir{};
"3";"4" **\crv{} ?>* \dir{};
(10,-3)*{\vdots}="a";
(10,0)*{\bullet}="5";
(10,10)*{\bullet}="6";
(10,20)*{\bullet}="7";
(10,30)*{\bullet}="8";
(10,35)*{\vdots}="c";
"5";"6" **\crv{} ?>* \dir{};
"6";"7" **\crv{} ?>* \dir{};
"7";"8" **\crv{} ?>* \dir{};
(20,-3)*{\vdots}="a";
(20,0)*{\bullet}="9";
(20,10)*{\bullet}="10";
(20,20)*{\bullet}="11";
(20,30)*{\bullet}="12";
(20,35)*{\vdots}="c";
"9";"10" **\crv{} ?>* \dir{};
"10";"11" **\crv{} ?>* \dir{};
"11";"12" **\crv{} ?>* \dir{};
(30,-3)*{\vdots}="a";
(30,0)*{\bullet}="13";
(30,10)*{\bullet}="14";
(30,20)*{\bullet}="15";
(30,30)*{\bullet}="16";
(30,35)*{\vdots}="c";
(35,20)*{\cdots}="c";
"13";"14" **\crv{} ?>* \dir{};
"14";"15" **\crv{} ?>* \dir{};
"15";"16" **\crv{} ?>* \dir{};
\endxy
\]
\caption{The digraph of $\bt$ from Example~\ref{egxcha}.}\label{f82}
\end{figure}

Using Theorem~\ref{tgxcha}, we can count the conjugacy classes in $\gx$. First, we need the following lemma.

\begin{lemma}\label{lgxaal}
Let $X$ be an infinite set with $|X|=\aep$, let $\al\in\gx$.
Then $|\aal|\leq\aep$, $|\ba|\leq\aep$, and $|\cna|\leq\aep$ for every $n\geq1$.
\end{lemma}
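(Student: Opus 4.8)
The plan is to exploit the complete disjointness of the connected components of $\al$ established in Proposition~\ref{pgxuni}. The key observation is that every element of $\aal\cup\ba\cup\ca$ is a connected component of $\al$, and these components have pairwise disjoint spans, each of which is a nonempty subset of $X$. Since distinct components give disjoint spans and each span is nonempty, selecting one vertex from every span yields an injection of $\aal\cup\ba\cup\ca$ into $X$, which caps the cardinality of the union (and hence of each of its subsets) by $|X|=\aep$.

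First I would invoke Proposition~\ref{pgxuni} to write $\al=(\join_{\eta\in\aal}\eta)\jo(\join_{\omega\in\ba}\omega)\jo(\join_{\thet\in\ca}\thet)$, where the transformations in $\aal\cup\ba\cup\ca$ are pairwise completely disjoint; by Definition~\ref{ddis} this means that $\spa(\sigma)\cap\spa(\tau)=\emptyset$ whenever $\sigma$ and $\tau$ are distinct members of $\aal\cup\ba\cup\ca$. Next I would note that each such $\sigma$, being a right ray, double ray, or cycle, is nonzero, so $\spa(\sigma)\ne\emptyset$. Using a choice function I select a point $p_\sigma\in\spa(\sigma)$ for each $\sigma\in\aal\cup\ba\cup\ca$. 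The assignment $\sigma\mapsto p_\sigma$ is injective precisely because the spans are pairwise disjoint, and its image is contained in $X$; hence $|\aal\cup\ba\cup\ca|\leq|X|=\aep$.

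Finally, since $\aal$, $\ba$, and each $\cna$ (the latter being a subset of $\ca$) are all subsets of $\aal\cup\ba\cup\ca$, each of them has cardinality at most $\aep$. This gives exactly the three claimed bounds $|\aal|\leq\aep$, $|\ba|\leq\aep$, and $|\cna|\leq\aep$ for every $n\geq1$.

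There is no serious obstacle here: the statement is a pure counting argument resting entirely on the complete disjointness guaranteed by Proposition~\ref{pgxuni}. The only point demanding a moment's care is verifying that every connected component has nonempty span, so that the selection map is well defined and injective; this is immediate because right rays, double rays, and cycles are all nonzero transformations.
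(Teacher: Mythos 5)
Your proof is correct and rests on the same key fact as the paper's: the pairwise complete disjointness of the components from Proposition~\ref{pgxuni} forces an injection of the component set into $X$. The paper phrases this by computing $|\bigcup_{\eta\in\aal}\dom(\eta)|=|\aal|\cdot\ale_0\ge|\aal|$ separately for each family, whereas you choose a representative point in each span and bound the whole union at once; this is a cosmetic rather than substantive difference.
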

\begin{proof}
Let $Y=\bigcup_{\eta\in\aal}\dom(\eta)\subseteq X$. Since the elements of $\aal$ are pairwise completely disjoint
and $|\dom(\eta)|=\ale_0$ for every $\eta\in\aal$, we have
\[
\aep=|X|\geq|Y|=|\bigcup_{\eta\in\aal}\dom(\eta)|=|\aal|\cdot\ale_0\geq|\aal|.
\]
Thus $|\aal|\leq\aep$. The proofs for $\ba$ and $\cna$ ($n\geq1$) are similar.
\end{proof}

For sets $A$ and $B$, we denote by $A^B$ the set of all functions from $B$ to $A$.

\begin{theorem}\label{tgxc}
Let $X$ be an infinite set with $|X|=\aep$. Let $\kappa=\ale_0+|\vep|$. Then there are
$\kappa^{\ale_0}$ conjugacy classes in $\gx$, of which two have a connected representative
if $\aep=\ale_0$, and none has a connected representative if $\aep>\ale_0$.
\end{theorem}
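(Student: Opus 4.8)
The plan is to read off the conjugacy invariant from Theorem~\ref{tgxcha}: every conjugacy class in $\gx$ is determined by, and in turn determines, the triple $(|\aal|+|\ba|,\ |\ba|,\ \lan|\cna|\ran_{n\geq1})$. I would then count how many such triples occur, establishing matching upper and lower bounds of $\kap^{\ale_0}$, and finally analyze which classes contain a connected element.

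For the bookkeeping, let $\mathcal K$ be the set of all cardinals $\leq\aep$. The finite cardinals contribute $\ale_0$ elements and the infinite cardinals $\ale_\mu$ ($\mu\leq\vep$) contribute $|\vep+1|$ elements, so $|\mathcal K|=\ale_0+|\vep+1|=\ale_0+|\vep|=\kap$, which is infinite. By Lemma~\ref{lgxaal} each of $|\aal|$, $|\ba|$, and $|\cna|$ belongs to $\mathcal K$, so by Theorem~\ref{tgxcha} the map $[\al]_c\mapsto(|\aal|+|\ba|,\,|\ba|,\,\lan|\cna|\ran_{n\geq1})$ is a well-defined injection of the set of conjugacy classes into $\mathcal K\times\mathcal K\times\mathcal K^{\mathbb Z_+}$. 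Hence the number of classes is at most $\kap\cdot\kap\cdot\kap^{\ale_0}=\kap^{\ale_0}$, giving the upper bound.

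For the lower bound I would realize prescribed cycle data. Put $\mathcal K_1=\mathcal K\sm\{0\}$, still of size $\kap$, and fix a sequence $\lan c_n\ran_{n\geq1}$ with each $c_n\in\mathcal K_1$. The cycles of length $n$ taken with multiplicity $c_n$ occupy $\lam=\sum_{n\geq1}c_n\cdot n$ points, and $\ale_0\leq\lam\leq\aep$. If $\lam=\aep$, I would partition $X$ itself into these blocks (possible because their sizes sum to $|X|$); if $\lam<\aep$, I would place the cycles on a subset $Y$ with $|Y|=\lam$ and cover the remaining $\aep$ points of $X\sm Y$ by $\aep$ pairwise disjoint right rays. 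In both cases the resulting $\al$ is full and injective, so $\al\in\gx$, with $|\cna|=c_n$ for all $n$. Distinct sequences differ in some coordinate and so yield non-conjugate elements by Theorem~\ref{tgxcha}; thus there are at least $|\mathcal K_1|^{\ale_0}=\kap^{\ale_0}$ classes, and the count is exactly $\kap^{\ale_0}$.

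It remains to count classes with a connected representative. If $\al\in\gx$ is connected, then $\al$ has no isolated vertices and $\spa(\al)=X$, so $\al$ is a single connected component, which by Lemma~\ref{lgxcon} is a right ray, a double ray, or a cycle. A cycle has finite span and a ray has span $\ale_0$, so a connected $\al\in\gx$ forces $|X|\leq\ale_0$. Consequently, if $\aep>\ale_0$ no element of $\gx$ is connected and no class has a connected representative; if $\aep=\ale_0$, the connected elements are exactly the single right ray and the single double ray (a single cycle is excluded because cycles are finite), and these lie in two distinct classes, separated by the value of $|\ba|$ (namely $0$ versus $1$), which exhausts the connected elements. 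I expect the realizability step in the lower bound to be the main obstacle: one must carefully pack an arbitrary cycle sequence into $X$ as a full injective map, splitting on whether the cycles already exhaust $\aep$ points and, if not, padding the complement with rays without disturbing the cycle counts.
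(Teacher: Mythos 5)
Your proof is correct and follows essentially the same route as the paper: both obtain the upper bound by injecting conjugacy classes into sequences of cardinals via the invariants of Theorem~\ref{tgxcha}, obtain the lower bound by realizing arbitrary cycle-multiplicity sequences (the paper avoids your case split on $\lam$ by always padding with $\aep$ fixed points, i.e.\ $1$-cycles, rather than right rays), and settle the connected-representative count via Lemma~\ref{lgxcon}. No gaps.
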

\begin{proof}
Let $K$ be the set of all cardinals $\tau$ such that $\tau\leq\aep$. Then $K$ contains $\ale_0$
finite cardinals and $|\vep|+1$ infinite cardinals, hence $|K|=\ale_0+|\vep|+1=\ale_0+|\vep|=\kappa$.
Let $\gx/\!\!\con$ be the set of conjugacy classes of $\gx$.
Define a function $f:\gx/\!\!\con\,\to K^{\mathbb N}$, where $\mathbb N=\{0,1,2,\ldots\}$, by
\[
([\al]_c)f=(|\aal|+|\ba|,|\ba|,|C^1_\al|,|C^2_\al|,|C^3_\al|,\ldots).
\]
By Theorem~\ref{tgxcha},
$f$ is well defined and injective. Thus $|\gx/\!\!\con\!\!|\leq|K^{\mathbb N}|=|K|^{|\mathbb N|}=\kappa^{\ale_0}$.

We next define an injective
mapping $g:K^{\mathbb N}\to\gx/\!\!\con$. Let
\[
\xi=(\tau_2,\tau_3,\tau_4,\ldots)\in K^{\mathbb N}.
\]
(It will be clear from the definition of $g$ why we begin the indexing with $n=2$.) Let $\tau=\sum_{n=2}^\infty n\tau_n$
(see \cite[Chapter~9]{HrJe99}). For every $n\geq2$, $n\tau_n\leq\aep$ (since $\tau_n\leq\aep$ and $\aep$ is infinite). Thus
\[
\tau=\sum_{n=2}^\infty n\tau_n\leq\ale_0\cdot\aep=\aep,
\]
and so $\aep+\tau=\aep$. Hence, there is a collection $\{X_n\}_{n\geq1}$ of pairwise disjoint subsets of $X$ such that
$\bigcup_{n=1}^\infty X_n=X$,
$|X_1|=\aep$, and $|X_n|=n\tau_n$ for every $n\geq2$. Let $n\geq2$. Since $|X_n|=n\tau_n$, there is a collection
$C_n$ of $n$-cycles in $\gx$ such that $|C_n|=\tau_n$ and $\dom(\join_{\,\thet\in C_n}\!\thet)=X_n$. Let $\al_n=\join_{\,\thet\in C_n}\!\thet$.
Define a transformation $\al_\xi$ on $X$ by
\[
\al_\xi=\join_{n\geq2}\!\al_n\jo\join_{x\in X_1}\!(x).
\]
Then $\al\in\gx$, $\aal=\ba=\emptyset$, and $\cna=C_n$ for all $n\geq2$.
Thus
\[
(|C^1_\al|,|C^2_\al|,|C^3_\al|,|C^4_\al|,\ldots)=(\aep,\tau_2,\tau_3,\tau_4,\ldots),
\]
and it follows from Theorem~\ref{tgxcha} that the mapping $g:K^{\mathbb N}\to\gx/\!\!\con$ defined by
$\xi g=\al_\xi$ is injective. Hence
$|\gx/\!\!\con\!\!|\geq|K^{\mathbb N}|=|K|^{|\mathbb N|}=\kappa^{\ale_0}$.

Suppose $|X|=\ale_0$, say $X=\{x_1,x_2,x_3,\ldots\}$.
Then, by Theorem~\ref{tgxcha} and Lemma~\ref{lgxcon}, the only conjugacy classes in $\gx$
with a connected representative are $[(x_1\,x_2\,x_3\ldots\ran]$ and $[\lan\ldots\,x_6\,x_4\,x_2\,x_1\,x_3\,x_5\ldots\ran]$.
(There is no single cycle in $\thet$ in $\gx$ since $\dom(\thet)$ is finite.)

If $|X|>\ale_0$, then no element $\al\in\gx$ is connected since $\dom(\al)=X$
and the domain of any right ray, double ray, or cycle has cardinality at most $\ale_0$.
The result follows.
\end{proof}

\section{Problems}\label{spro}

The results of this paper prompt a number of problems in combinatorics, semigroups, matrix theory, and set theory.
The first problem asks for the number of conjugacy classes in some important finite semigroups.
\begin{prob}
{\rm
Let $X$ be a finite set. Is it possible to find a closed formula that gives the number of conjugacy classes in $T(X)$, $P(X)$ or $\mi(X)$
(where $\mi(X)$ denotes the symmetric inverse semigroup on $X$)?
}
\end{prob}

The second problem might attract the attention of experts in set theory.
\begin{prob}
{\rm
Let $X$ be an infinite set with $|X|=\aep$. According to Theorem \ref{ttcc}, the number
of conjugacy classes in $T(X)$ that have a connected representative is in the interval
$[\aepp,\aepp^{\ale_0}]$.
Is it possible to be more precise and reduce the length of this interval?
}
\end{prob}

In this paper we characterized the conjugate elements in some well-known transformation semigroups, but
there are many other transformation semigroups, or endomorphism monoids of some relational algebras
that may be considered.
\begin{prob}
{\rm
Characterize $\con$, and calculate the number of conjugacy classes,  in other transformation semigroups such as, for example, those appearing in the problem list of \cite[Section~6]{ak} or those
appearing in the large list of transformation semigroups included in \cite{vhf}.  Especially interesting would be a characterization of the conjugacy classes in the centralizers of idempotents \cite{arko2,arko1}.
}
\end{prob}

The theorems and problems in this paper have natural linear counter-parts.
\begin{prob}
{\rm
Characterize $\con$ in the endomorphism monoid of a (finite or infinite dimensional) vector space.
}
\end{prob}

Whenever some result holds for both sets and vector spaces the natural step forward is to prove those results for independence algebras.
\begin{prob}
{\rm
Characterize $\con$ in the endomorphism monoid of a (finite or infinite dimensional) independence algebra.
(For historical notes on the importance of these algebras, see \cite{aeg,arfo}; for definitions and basic results, see
\cite{araujo2,araujo,armi2,aw,cameron,fou1,fou2,gould}).
}
\end{prob}

\begin{prob}
{\rm
The notion of conjugation $\sim_p$  defined in (\ref{econ2}) is very important in symbolic dynamics in connection
with the Williams Conjecture \cite{will}. Characterize  $\sim_p$ in $T(X)$, $P(X)$ and $\mi(X)$ for an infinite set $X$.
(Kudryavtseva and Mazorchuk \cite{KuMa07} have characterized $\sim_p^*$ (the transitive closure of $\sim_p$) in
$T(X)$, $P(X)$ and $\mi(X)$ for a finite $X$, and in $\mi(X)$ for a countably infinite $X$.)
}
\end{prob}

\section{Acknowledgments}
The authors would like to thank the referee for the very careful reading of our paper and excellent suggestions,
especially the one regarding Definition~4.20. We are grateful to Michael Kinyon whose questions and problems on conjugation in semigroups led to this paper.

\end{document}